\newtheorem{lem}{Lemma}
\newtheorem{defn}[lem]{Definition}
\newtheorem{prop}[lem]{Proposition}
\newtheorem{thm}[lem]{Theorem}
\newtheorem{cor}[lem]{Corollary}
\theoremstyle{remark}
\newtheorem{rem}[lem]{Remark}
\newcommand{\ddbar}{i\partial\overline{\partial}}
\newcommand{\dbar}{\overline{\partial}}
\renewcommand{\phi}{\varphi}
\renewcommand{\epsilon}{\varepsilon}
\renewcommand{\l}{\mathbf{l}}
\newcommand{\Bl}{\mathrm{Bl}}
\title{Blowing up extremal K\"ahler manifolds II}
\author{G\'abor Sz\'ekelyhidi}
\address{Department of Mathematics, University of Notre Dame, Notre
  Dame, IN 46615}
\email{gszekely@nd.edu}
\date{}
\begin{document}

\begin{abstract}
	This is a continuation of the work of Arezzo-Pacard-Singer and
        the author on
	blowups of extremal K\"ahler manifolds. We 
	prove the conjecture stated in~\cite{GSz10}, and we relate this
	result to the K-stability of blown up manifolds. As an
	application we prove that if a K\"ahler manifold $M$ of
	dimension greater than 2 admits a cscK metric, then the blowup of $M$
	at a point admits
	a cscK metric if and only if it is K-stable, as
	long as the exceptional divisor is sufficiently small.
\end{abstract}
\maketitle

\section{Introduction}
We continue our study~\cite{GSz10} of extremal metrics on blown-up
manifolds, following the work of Arezzo-Pacard~\cite{AP06,AP09} and
Arezzo-Pacard-Singer~\cite{APS06}. See Pacard~\cite{Pac10} for a
survey and see also LeBrun-Singer~\cite{LS93},
Rollin-Singer~\cite{RS07}, Tipler~\cite{Tip11},
Biquard-Rollin~\cite{BR12}
for related work. The starting point is a compact
K\"ahler manifold $M$  with an extremal metric $\omega$.
The notion of extremal metric was introduced by Calabi~\cite{Cal82}, and
it means that the gradient of the scalar curvature
$\nabla\mathbf{s}(\omega)$ is a holomorphic vector field. The basic
question that we study is whether the 
blowup $\Bl_{p_1,\ldots,p_n}M$ of $M$ in a finite number of points admits
an extremal metric in the K\"ahler class
\begin{equation}\label{eq:kclass}
	\pi^*[\omega] - \epsilon^2(a_1[E_1] + a_2[E_2] +\ldots a_n[E_n]),
\end{equation}
where $\pi$ is the blowdown map, $a_1,\ldots,a_n > 0$ are constants, 
$\epsilon > 0$ is very small, and $E_i$ are the exceptional divisors. 
Our methods,
following~\cite{AP06,AP09,APS06} are perturbative, restricting the
results to sufficiently small $\epsilon > 0$. In addition our results
will be restricted to blowing up only one point, and dimension $m >
2$. We expect that with some more work our method can deal
with the case $m=2$, but blowing up more than one point introduces
more serious difficulties as we will
explain in Section~\ref{sec:general}. 

To state the main result, let us write $G$ for the group of Hamiltonian
isometries of $(M,\omega)$ and $\mathfrak{g}$ for its Lie algebra. Let
\begin{equation}
	\mu : M \to \mathfrak{g}^*
\end{equation}
be the equivariant moment map for the action of 
$G$ normalized in such a way that
the Hamiltonian functions $\langle\mu,\xi\rangle$ have zero mean on $M$
for all $\xi\in\mathfrak{g}$. From now on we will identify
$\mathfrak{g}$ with its dual, using the inner product given by the $L^2$
product on Hamiltonian functions. Let $\Delta\mu$ be the Laplacian of $\mu$
taken componentwise after identifying $\mathfrak{g}^*$ with
$\mathbf{R}^l$ for some $l$. A central role is played by the perturbed
moment map 
\[ \mu(p) + \delta\Delta\mu(p) \]
for small $\delta$. 
Note that this is simply the moment map for the action of $G$ on
$M$ with respect to the K\"ahler form $\omega-\delta\rho$, where $\rho$
is the Ricci form of $\omega$. Let us write $G^c$ for the
complexification of $G$, acting on $M$ by biholomorphisms. 
With this the main result is as follows, confirming 
Conjecture 6 in~\cite{GSz10} in the case when $m > 2$. 

\begin{thm}\label{thm:main}
	Assume that the dimension $m > 2$, and suppose that
        $\nabla\mathbf{s}(\omega)$ vanishes at $p\in M$. There is a $\delta_0 > 0$
        depending on $(M,\omega)$ with the following property. Suppose
        that for some $\delta\in(0,\delta_0)$ there is a point $q$ in
        the $G^c$-orbit of $p$ such that the vector field $\mu(q) +
        \delta \Delta\mu(q)$ vanishes at $q$. Then 
	      the blowup $\Bl_pM$ admits an
			extremal metric in the K\"ahler class
			\[ \pi^*[\omega] - \epsilon^2[E],\]
			for all sufficiently small $\epsilon > 0$.
\end{thm}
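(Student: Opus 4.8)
The plan is to adapt the gluing construction of Arezzo-Pacard-Singer, carried out equivariantly, so that the obstruction to solving the extremal equation on $\Bl_pM$ reduces to a finite-dimensional equation that we recognise as a small perturbation of the moment-map condition in the hypothesis.

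As a preliminary reduction, recall that $G^c$ acts on $\Bl_pM$ preserving the K\"ahler class $\pi^*[\omega]-\epsilon^2[E]$, so it suffices to produce the desired metric after replacing $p$ by any point of its orbit $\mathcal{O}=G^c\cdot p$; moreover the extremal field $\nabla\mathbf{s}(\omega)$ is central in the isometry Lie algebra of $(M,\omega)$, hence commutes with $\mathfrak{g}^c$, so its zero set is $G^c$-invariant and contains all of $\mathcal{O}$, and every point of $\mathcal{O}$ is an admissible centre for the construction. We will blow up at a point $q_\epsilon\in\mathcal{O}$ at which the perturbed moment vector field of $(M,\omega-c\epsilon^2\rho)$, that is $\mu+c\epsilon^2\Delta\mu$, vanishes, for a suitable constant $c>0$. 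The hypothesis supplies such a point for the single value $\delta\in(0,\delta_0)$, and I expect a continuity-and-properness argument --- the set of $t$ for which $\mathcal{O}$ contains a zero of the moment vector field of $(M,\omega-t\rho)$ is relatively open by non-degeneracy, and, by an energy estimate for the Kempf-Ness functional along $\mathcal{O}$, the corresponding path of zeros does not escape $\mathcal{O}$ as $t$ decreases --- to produce such a $q_\epsilon$ for every small $t=c\epsilon^2$, staying in a fixed compact part of $\mathcal{O}$.

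Next I would build, $T$-equivariantly for a maximal torus $T$ stabilising $q_\epsilon$, a family of approximately extremal metrics $\omega_{\epsilon,q'}$ on $\Bl_{q_\epsilon}M$ in the class $\pi^*[\omega]-\epsilon^2[E]$, parametrised by $q'$ in a neighbourhood of $q_\epsilon$ in $\mathcal{O}$: glue a rescaled Burns-Simanca metric $\epsilon^2\eta$ into a small ball around $q'$ in $(M,\omega)$, matching K\"ahler potentials, and transport the result to $\Bl_{q_\epsilon}M$ by a biholomorphism in $G^c$ taking $q'$ to $q_\epsilon$ (this stays in the class because $G^c$ fixes $[\omega]$). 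A direct computation should give $\mathbf{s}(\omega_{\epsilon,q'})=\underline{\mathbf{s}}_\epsilon+(\text{error})$ with the error $O(\epsilon^{2m-2})$ in suitable weighted H\"older norms and leading terms expressed through the Green's function of $\omega$ at $q'$ and the Ricci potential. The dimension hypothesis $m>2$ enters here and repeatedly afterwards: the Burns-Simanca metric approaches the flat metric with leading potential correction of order $r^{4-2m}$, fast enough to avoid the logarithmic and spurious obstruction terms that occur when $m=2$. On weighted spaces adapted to the neck I then expect a right inverse $P_\epsilon$ for the linearised operator $\mathcal{D}^*_{\omega_{\epsilon,q'}}\mathcal{D}_{\omega_{\epsilon,q'}}$ modulo a fixed finite-dimensional subspace identified with $\mathfrak{g}$ --- the holomorphy potentials of elements of $\mathfrak{g}$, which persist as an approximate cokernel precisely because for $m>2$ the scalar-flat Burns-Simanca metric contributes nothing new --- with $\|P_\epsilon\|\le C\epsilon^{-N}$ for fixed $C,N$.

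A contraction-mapping argument built on $P_\epsilon$ and the error estimate then produces, for every small $\epsilon$ and every $q'$ near $q_\epsilon$, a potential $\phi_{\epsilon,q'}$ and a candidate extremal field $\chi_{\epsilon,q'}\in\mathfrak{g}$ close to $\nabla\mathbf{s}(\omega)$ with
\[ \mathbf{s}\big(\omega_{\epsilon,q'}+\ddbar\phi_{\epsilon,q'}\big)-\underline{\mathbf{s}}_\epsilon-\langle\mu_{\omega_{\epsilon,q'}+\ddbar\phi_{\epsilon,q'}},\chi_{\epsilon,q'}\rangle\ \in\ \mathfrak{g}, \]
and the residual $\mathfrak{g}$-component defines a smooth map $\mathcal{F}_\epsilon$ from a neighbourhood of $q_\epsilon$ in $\mathcal{O}$ to $\mathfrak{g}$, a zero of which yields a genuine extremal metric on $\Bl_{q_\epsilon}M\cong\Bl_pM$ in the stated class. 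The decisive, and I expect hardest, step is to compute the leading asymptotics of $\mathcal{F}_\epsilon$: by the expansions above it should equal, up to a positive multiplicative constant and higher-order terms, the relevant projection of $\mu(q')+c\epsilon^2\Delta\mu(q')$, so that the choice of $q_\epsilon$ makes the leading term vanish. Combining this with the quantitative non-degeneracy of the moment-map picture --- the infinitesimal complexified action at $q_\epsilon$ is surjective onto $\mathfrak{g}$ modulo the stabiliser --- one solves $\mathcal{F}_\epsilon=0$ at a point near $q_\epsilon$, uniformly in small $\epsilon$, which finishes the proof. The real work lies in extracting $\mathcal{F}_\epsilon$ with \emph{both} the $\mu$-term and the $\Delta\mu$-term correct, with no spurious lower-order contributions, and in making the linear estimates uniform enough to locate the zero as $\epsilon\to0$.
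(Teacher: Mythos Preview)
Your overall architecture matches the paper's: glue, reduce to a finite-dimensional obstruction map, compute its two leading terms $\mu$ and $\Delta\mu$, and combine with a finite-dimensional moment-map argument. But there is a genuine gap in the finite-dimensional endgame.

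You assert that the points $q_\epsilon\in\mathcal{O}$ solving $\mu(q_\epsilon)+c\epsilon^2\Delta\mu(q_\epsilon)=0$ ``stay in a fixed compact part of $\mathcal{O}$'' as $\epsilon\to 0$, and then you invoke non-degeneracy (essentially the implicit function theorem) to solve $\mathcal{F}_\epsilon=0$ nearby. This compactness is false in exactly the interesting case. If $p$ is only strictly \emph{semi}stable for $\mu$ (which the hypothesis certainly allows: it only asks for a zero of $\mu+\delta\Delta\mu$, not of $\mu$), then any accumulation point of the $q_\epsilon$ lies on the boundary $\partial(G^c\cdot p)$, not in $\mathcal{O}$. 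Your Kempf--Ness ``properness'' heuristic breaks down precisely here: the norm-squared function for $\mu$ is not proper along $\mathcal{O}$ when $p$ is semistable. Once $q_\epsilon$ escapes, both the torus you use (``a maximal torus $T$ stabilising $q_\epsilon$'') and the non-degeneracy constants in your final step degenerate, and the argument collapses.

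The paper confronts this directly. First, it fixes a single torus $T\subset G_p$ at the outset and performs the gluing uniformly over all $T$-fixed points (so there is no drifting torus), reducing to the subgroup $G^c_{T^\perp}$ for which $p$ has trivial stabiliser. Second, and this is the key replacement for your implicit-function step, it proves a perturbation theorem (the paper's Theorem~\ref{thm:perturb}) by a \emph{degree} argument rather than a local inversion: from semistability for $\mu$ and stability for $\mu+\delta\nu$ one extracts a uniform lower bound on the Hilbert--Mumford weights $W_{\mu+\epsilon\nu}$, hence a sphere in $\mathfrak{g}$ on which the moment map stays away from $0$; the induced map on this sphere has degree $\pm 1$, and since $\mu_\epsilon$ is $O(\epsilon^\kappa)$-close, it too must hit $0$ inside. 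This works even though the zero $q_\epsilon$ may wander off toward $\partial\mathcal{O}$. Finally, Proposition~\ref{prop:alldelta} replaces your continuity claim: it shows, again via weights, that existence of a zero for one small $\delta$ forces it for all small $\delta$. If you want to salvage your outline, you need to replace the local/IFT step by a topological one of this kind.
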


Suppose that $M$ is a projective variety and $\omega\in c_1(L)$ for a line bundle $L$
over $M$. The condition in the theorem can be interpreted as relative
stability of the point $p$ with respect to the natural linearization
of the $G^c$-action on the $\mathbf{Q}$-line bundle $L + \delta K_M$
for small rational $\delta$, where $K_M$ is the canonical bundle. In
this terminology, our earlier result in~\cite{GSz10} only dealt with
the case when $p$ is relatively stable with respect to the
linearization on $L$. This in turn refined earlier results of
Arezzo-Pacard-Singer~\cite{APS06}, where some extra conditions were
required. Allowing a small perturbation of the line bundle
$L$ gives more precise information about case when the point $p$ is
strictly semistable. 

In the case when $(M,\omega)$ is a constant scalar curvature K\"ahler
(or cscK) manifold, then we can show that
Theorem~\ref{thm:main} actually gives a
complete characterization of the possible blowup points. Suppose again
that $M$ is projective and $\omega\in c_1(L)$. 
For small rational $\epsilon$ let us write
$L_\epsilon = \pi^*L - \epsilon^2[E]$
for an ample $\mathbf{Q}$-line bundle on the blowup
$\Bl_pM$. 
The Yau-Tian-Donaldson conjecture~\cite{Yau93, Tian97,
Don02} predicts that the existence of a cscK metric on the
blowup $\Bl_pM$ in the first Chern class $c_1(L_\epsilon)$
is related to the K-stability of the pair
$(\Bl_pM, L_\epsilon)$. In Section~\ref{sec:relstab}
we define a simple version of K-stability for
K\"ahler manifolds, restricting attention to test-configurations with
smooth central fibers. Using this and Theorem~\ref{thm:main} we obtain
the following. 
\begin{thm}\label{thm:Kstab}
        Let $(M,\omega)$ be a cscK manifold of dimension $m > 2$, and
        let $p\in M$. Then for sufficiently small
        $\epsilon > 0$ the following are equivalent, and are
        independent of $\epsilon$:
        \begin{enumerate}
          \item The blowup $\Bl_pM$ admits a cscK metric
            in the class $\pi^*[\omega] - \epsilon^2[E]$,
          \item The pair $(\Bl_pM,\pi^*[\omega]-\epsilon^2[E])$ is
            K-stable with respect to smooth test-configurations,
          \item There is a point $q$ in the $G^c$-orbit of $p$ such
            that $\mu(q) + \epsilon\Delta\mu(q) =0$.
       \end{enumerate}
\end{thm}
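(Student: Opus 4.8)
The plan is to prove the cyclic chain of implications $(3)\Rightarrow(1)\Rightarrow(2)\Rightarrow(3)$, and then to observe separately that condition~(3) is independent of $\epsilon$ for $\epsilon$ small — being a relative GIT-stability condition for the point $p$ that is decided by the signs of finitely many weights, each of which is affine in $\epsilon$ — so that the equivalences transport this $\epsilon$-independence to~(1) and~(2) as well. Among the three implications, $(3)\Rightarrow(1)$ is a refinement of the proof of Theorem~\ref{thm:main} in the cscK case, $(1)\Rightarrow(2)$ is the (easier) direction of the Yau--Tian--Donaldson correspondence in the restricted setting of smooth test-configurations, and $(2)\Rightarrow(3)$ — the step I expect to be the main obstacle — requires converting the failure of~(3) into an explicit destabilizing smooth test-configuration for the blowup.

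For $(3)\Rightarrow(1)$ I would take $\epsilon$ small enough that $\epsilon\in(0,\delta_0)$ and run the construction underlying Theorem~\ref{thm:main} with $\delta=\epsilon$, starting from a point $q$ as in~(3); since the element of $G^c$ carrying $q$ to $p$ induces a biholomorphism $\Bl_qM\to\Bl_pM$ preserving the class $\pi^*[\omega]-\epsilon^2[E]$, this yields an extremal metric on $\Bl_pM$ in that class. To upgrade ``extremal'' to ``cscK'' I would show its extremal vector field vanishes: since $m\geq 2$ this vector field is the pullback of a holomorphic vector field on $M$ vanishing at $q$, and as $\epsilon\to0$ it converges to the extremal vector field of $(M,\omega)$, which is $0$ because $\omega$ is cscK; moreover it vanishes precisely when the Futaki invariant of $(\Bl_pM,\pi^*[\omega]-\epsilon^2[E])$ does. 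To leading order in $\epsilon$ that Futaki invariant is controlled by the value at $q$ of the perturbed moment map $\mu+\epsilon\Delta\mu$, which vanishes by~(3); the implicit function / degree argument around the cscK metric $\omega$ that drives the proof of Theorem~\ref{thm:main} then lets one arrange the Futaki invariant to vanish identically, not merely to leading order, so the metric is genuinely cscK.

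For $(1)\Rightarrow(2)$: a cscK metric in the class $\pi^*[\omega]-\epsilon^2[E]$ makes the Mabuchi functional of that class bounded below, so the Donaldson--Futaki invariant, being the asymptotic slope of the Mabuchi functional along the smooth geodesic ray associated with a test-configuration, is nonnegative, and — by uniqueness of cscK metrics — positive unless the test-configuration is a product; this is K-stability in the sense of Section~\ref{sec:relstab}. For $(2)\Rightarrow(3)$ I would argue the contrapositive: if no $q$ in the $G^c$-orbit of $p$ satisfies $\mu(q)+\epsilon\Delta\mu(q)=0$, then $p$ is not relatively polystable for the natural linearization of the $G^c$-action on the $\mathbf{Q}$-line bundle $L+\epsilon K_M$, so by the Kempf--Ness/moment-map picture there is a one-parameter subgroup $\lambda\colon\mathbf{C}^*\to G^c$ with $p_0=\lim_{t\to0}\lambda(t)\cdot p$ existing and with associated relative weight of the destabilizing sign. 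Blowing up the moving point along this degeneration produces a test-configuration for $(\Bl_pM,\pi^*[\omega]-\epsilon^2[E])$ with smooth central fibre $\Bl_{p_0}M$, and the main computation of Section~\ref{sec:relstab} should identify its Donaldson--Futaki invariant, to the relevant order in $\epsilon$, with that weight — the contributions intrinsic to $M$ dropping out because $M$ is cscK — so the invariant is nonpositive and the blowup is not K-stable.

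Finally, for the $\epsilon$-independence: the truth of~(3) is exactly the relative (poly)stability of $p$ for the linearization on $L+\epsilon K_M$, and this is decided by whether a fixed finite collection of weights, each an affine function of $\epsilon$, has the right sign; hence for all $\epsilon$ in a sufficiently small interval $(0,\epsilon_0)$ the validity of~(3) is unchanged, and by the equivalences above so is that of~(1) and~(2). The part I expect to demand the most care is $(2)\Rightarrow(3)$: checking that the blown-up family is indeed a smooth test-configuration of the admissible type, and pinning down the leading-order $\epsilon$-behaviour and sign of its Donaldson--Futaki invariant so that it is genuinely governed by the relative weight of the degeneration of $p$ with respect to $L+\epsilon K_M$.
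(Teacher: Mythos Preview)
Your cyclic scheme $(3)\Rightarrow(1)\Rightarrow(2)\Rightarrow(3)$ matches the paper's, and your instincts about where the work lies are sound. But two steps contain genuine gaps, and both stem from the same missing ingredient: the Futaki invariant of the blowup is not merely controlled by $\mu(p)+\epsilon\Delta\mu(p)$ \emph{to leading order}; rather, Corollary~\ref{cor:FutBl} gives an \emph{exact} formula
\[
\mathrm{Fut}(\Bl_pM,[\omega_\epsilon],\hat v)=A_\epsilon\,h_v(p)+B_\epsilon\,\Delta h_v(p)
\]
when $\mathrm{Fut}(M,[\omega],v)=0$, and in particular it vanishes identically once $h_v(p)=\Delta h_v(p)=0$. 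Without this exact statement your argument does not close.

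In $(3)\Rightarrow(1)$, the claim that one can ``arrange the Futaki invariant to vanish identically'' by an implicit-function or degree argument is not correct: the Futaki invariant is a class invariant and cannot be adjusted by choosing a metric, and convergence of the extremal vector field to zero as $\epsilon\to 0$ says nothing about its vanishing for fixed small $\epsilon$. The paper instead shows (via an argument parallel to Proposition~\ref{prop:alldelta}) that condition~(3) forces $\mathrm{pr}_{\mathfrak g_p}\mu(p)=\mathrm{pr}_{\mathfrak g_p}\Delta\mu(p)=0$, hence $h_v(p)=\Delta h_v(p)=0$ for every $v\in\mathfrak g_p$, and then the exact formula above makes the Futaki invariant vanish on the nose, so the extremal metric from Theorem~\ref{thm:main} is cscK.

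In $(2)\Rightarrow(3)$ you anticipate producing a one-parameter subgroup with weight ``of the destabilizing sign'', but this misses the strictly semistable case: when $p$ is semistable for $\mu_{T^\perp}+\epsilon\Delta\mu_{T^\perp}$ yet not polystable, every weight is $\geq 0$, and the destabilizing test-configuration one builds (with central fibre $\Bl_qM$ for $q$ on the orbit boundary) has Futaki invariant \emph{equal to zero}, not negative. To conclude K-instability you then need the test-configuration to be non-product, which it is since $q\notin G^c\cdot p$; but to see that the invariant is exactly zero you again need the exact formula above together with $h_v(q)=\Delta h_v(q)=0$, which the paper extracts from the equivariance of the moment maps (Proposition~\ref{prop:unstable}, third case). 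Finally, your $\epsilon$-independence sketch appeals to ``finitely many weights'', which is unavailable in the K\"ahler (non-algebraic) setting; the paper's Proposition~\ref{prop:alldelta} uses instead lower semicontinuity of $W(p,\cdot)$ on the unit sphere.
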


This extends our earlier result in~\cite{GSz10}, where $(M,\omega)$ was
assumed to be a K\"ahler-Einstein manifold. A natural problem is to
generalize this result to extremal metrics, and we will discuss the
difficulty that arises in Section~\ref{sec:general}.  

The contents of the paper are as follows. In
Section~\ref{sec:finitedim} we prove a finite dimensional
perturbation result, which is sharper than the result
we used in~\cite{GSz10}. The heart of the paper is
Section~\ref{sec:gluing} where the main analytic gluing theorem,
Theorem~\ref{thm:gluing}  is
proved, and the proof of Theorem~\ref{thm:main} is given
at the end of
Section~\ref{sec:extequation}.
The main ingredient is a refined expansion of the solution to the equation
introduced in \cite{GSz10}. This is similar to other obstructed
perturbation problems in the literature such as Pacard-Xu~\cite{PX09}, but an
important difference is that computing one more term in our case
is enough to get a sharp existence result for cscK metrics when $n >
2$. The reason for this is
the algebro-geometric structure of the problem. We discuss this
aspect in the final
Section~\ref{sec:relstab}, where we extend \cite[Theorem 5]{GSz10}
to K\"ahler manifolds which are not necessarily
algebraic, and we give the proof of Theorem~\ref{thm:Kstab}. 

\subsection*{Acknowledgements}
I would like to thank Frank Pacard and Michael Singer for several useful discussions.

\section{Relative stability}\label{sec:finitedim}

In this section we will consider the action of a compact Lie group $G$
on a compact K\"ahler manifold $M$, which is Hamiltonian with respect to a
K\"ahler form $\omega$. The action of $G$ extends to a holomorphic
action of the complexification $G^c$. Let us write 
\[ \mu : M \to \mathfrak{g} \]
for the moment map, where we have identified $\mathfrak{g}\cong
\mathfrak{g}^*$ using an invariant inner product. Since we want to
work on K\"ahler manifolds which may not be algebraic, we will review
the basic ideas from relative stability~\cite{GSz04} in this
setting. The usual GIT theory has been extended to this setting by
several authors, for instance Mundet i Riera~\cite{MR00}, and
Teleman~\cite{Tel04}. Our definitions will not necessarily match with
theirs since we just want to cover the bare minimum of the theory that
we will use. 

We will need to work with maximal compact subgroups of $G^c$ other
than $G$. If $K\subset G^c$ is maximal compact, then $K =
\mathrm{Ad}_g(G)$ for some $g\in G^c$. The metric $(g^{-1})^*\omega$
is $K$-invariant, and a corresponding moment map is given by
\begin{equation}\label{eq:muK}
  \mu_K(g\cdot p) = \mathrm{ad}_g \mu(p).
  \end{equation}
This way of assigning ``compatible'' moment maps for the actions of
all maximal compact subgroups of $G^c$ is analogous to a linearization
of the action in GIT, and it is called a ``symplectization'' of the
action in \cite{Tel04}. 

\begin{defn}\label{defn:relstab}
  \begin{enumerate}
    \item A point $p\in M$ is stable for the action of $G^c$ (and for our
choice of symplectization), if the stabilizer $G^c_p$ is trivial, and there
exists a point $q\in G^c\cdot p$ such that $\mu(q) =0$.
\item A point $p\in M$ is semistable for the action of $G^c$ if there
  is a point in the orbit closure $q\in \overline{G^c\cdot p}$ such
  that $\mu(q) =0 $. 
\item A point $p\in M$ is relatively stable for the action of $G^c$,
  if there exists a point $q\in G^c\cdot p$ such that $\mu(q)\in
  \mathfrak{g}_q$. 
  \end{enumerate}
\end{defn}

It follows from \eqref{eq:muK} that
the definition is independent of the choice of maximal compact
subgroup. The main observation in \cite{GSz04} (see also
Kirwan~\cite{Kir84}) is that relative stability of a point $p$ is
equivalent to stability of $p$ for the action of a subgroup of
$G^c$. The relevant subgroup can be defined for any complex torus
$T^c\subset G^c$. For this, let
$K\subset G^c$ be a maximal compact subgroup
containing the compact torus $T$. Let us write
\[ \mathfrak{k}_{T^\perp} = \{ \xi\in \mathfrak{k}\,:\,
\mathrm{ad}_\xi(\eta) = 0, \quad \langle \xi,\eta\rangle = 0\,\text{
  for all }\eta\in\mathfrak{t}\}, \]
and let $K_{T^\perp}$ be the corresponding subgroup of $K$ (this is a
closed subgroup by \cite[Lemma 1.3.2]{GSzThesis}). We then let
$G^c_{T^\perp}$ be the complexification of $K_{T^\perp}$. We will need
the following result. 

\begin{prop}\label{prop:GT}
Suppose that $p\in M$ is relatively stable, and $T\subset G_p$ is a
maximal torus, such that $T^c\subset G^c_p$ is also maximal. Then
there exists a point $q\in G^c_{T^\perp}\cdot p$ such that $\mu(q)\in
\mathfrak{g}_q$.  
\end{prop}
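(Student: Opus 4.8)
The plan is to reduce to the case where the point $p$ is fixed by the maximal torus $T$, and then to run a standard moment-map convexity/gradient-flow argument within the smaller group $G^c_{T^\perp}$. First I would observe that since $T\subset G_p$, the orbit $G^c\cdot p$ is preserved by $T^c$, and in fact $p$ is a fixed point of $T^c$ because $T^c\subset G^c_p$ by hypothesis. Since $p$ is relatively stable, there is by definition some $q_0\in G^c\cdot p$ with $\mu(q_0)\in\mathfrak{g}_{q_0}$; write $q_0 = g\cdot p$ for $g\in G^c$. The issue is that $g$ need not lie in $G^c_{T^\perp}$, so I want to modify $g$ by elements of the "directions we are discarding", namely $T^c$ and the part of $G^c$ that does not commute with $T$, without destroying the relative stability of the resulting point.

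The key structural point is the decomposition of $\mathfrak{g}^c$ (equivalently of a neighbourhood of the identity in $G^c$) coming from the adjoint action of $T$: one has $\mathfrak{k} = \mathfrak{t}\oplus\mathfrak{k}_{T^\perp}\oplus\mathfrak{m}$, where $\mathfrak{m}$ is the sum of the nontrivial real root spaces for $T$, i.e.\ the directions on which $T$ acts nontrivially. Correspondingly $G^c\cdot p$ near $p$ is, up to the $T^c$-action (which fixes $p$), generated by the $G^c_{T^\perp}$ and "$M^c$" directions. Since $p$ is $T^c$-fixed, the $T^c$-factor of $g$ can be absorbed, so modulo $G^c_{T^\perp}$ I only need to kill the $\mathfrak{m}$-component. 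The cleanest way to do this is via the gradient flow of $|\mu|^2$ restricted to the orbit: following \cite{GSz04,Kir84}, relative stability of $p$ is equivalent to the statement that the infimum of $|\mu_{\mathfrak{t}^\perp}|$ along $G^c\cdot p$ is attained, where $\mu_{\mathfrak{t}^\perp}$ is the component of $\mu$ orthogonal to $\mathfrak{t}$; and because $p$ is $T^c$-fixed, the relevant "orthogonal" moment map only sees the $\mathfrak{k}_{T^\perp}\oplus\mathfrak{m}$ directions. The minimizing point $q$ then automatically satisfies $\mu(q)\in\mathfrak{g}_q$.

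To see that the minimizer can be taken in $G^c_{T^\perp}\cdot p$ rather than the full orbit, I would argue as follows. Consider the restriction of the function $g\mapsto |\mu(g\cdot p)|^2$ to $G^c_{T^\perp}$ (acting on the $T^c$-fixed point $p$). By the Kempf--Ness type theory in this Kähler setting (Mundet i Riera \cite{MR00}, Teleman \cite{Tel04}), a critical point $q$ of this restricted function, if it exists, satisfies $\mu(q)\perp\mathfrak{g}_{T^\perp}$-component-of-the-tangent-space, i.e.\ $\mu(q)\in\mathfrak{t}\oplus\mathfrak{m}$ plus the stabilizer directions; combined with $T$-invariance of $q$ one upgrades this to $\mu(q)\in\mathfrak{g}_q$. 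So the real content is \emph{existence} of such a critical point, and this is where relative stability of $p$ (in the full group) must be used: I would show that the $\mathfrak{m}$-directions cannot carry $|\mu|^2$ to its infimum, because moving in an $\mathfrak{m}$-direction away from the $T^c$-fixed locus strictly increases the contribution of the corresponding root-space component of the moment map — this is the infinitesimal version of the statement that $p$ being $T^c$-fixed makes it "already optimal" in the $\mathfrak{m}$-directions. Hence any minimizing sequence for $|\mu_{\mathfrak{t}^\perp}|^2$ on $G^c\cdot p$ can be replaced by one staying in $G^c_{T^\perp}\cdot p$, and its limit (which exists by relative stability, after the usual properness argument) is the desired $q$.

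The main obstacle I anticipate is the analytic control of the $|\mu|^2$-flow on the (generally non-compact, non-algebraic) orbit: one needs that relative stability genuinely forces the flow to converge, and that the limit lies in the orbit itself rather than only its closure — the distinction between "relatively stable" and "relatively semistable" in Definition~\ref{defn:relstab}. In the algebraic case this is handled by GIT; here it is the content of the extended theory of \cite{MR00,Tel04} together with \cite[Lemma 1.3.2]{GSzThesis} for the closedness of $K_{T^\perp}$, and I would invoke those rather than reprove them. The only genuinely new bookkeeping is the compatibility of the $T$-action with this flow — that the flow on $G^c\cdot p$ starting from a $T$-invariant configuration can be taken $T$-equivariant, hence stays in $G^c_{T^\perp}\cdot p$ — which is a soft equivariance statement but needs to be stated carefully.
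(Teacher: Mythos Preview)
Your gradient-flow approach is different from the paper's and, while plausible in outline, leaves the key step unproved. The paper's argument is shorter and avoids flow analysis entirely. From relative stability there is some $g\in G^c$ with $\mu(g\cdot p)\in\mathfrak{g}_{g\cdot p}$; the transformation rule \eqref{eq:muK} converts this into the statement that for the maximal compact $K=\mathrm{Ad}_{g^{-1}}(G)$ one has $\mu_K(p)\in\mathfrak{k}_p$ directly. Taking a maximal torus $T'\subset K_p$, one checks $\mu_K(p)\in\mathfrak{t}'$ (since $\mu_K(p)$ is $K_p$-invariant and $T'$ is maximal in $K_p$), so $\mathrm{proj}_{\mathfrak{k}_{T'^\perp}}\mu_K(p)=0$ and, via \cite[Lemma 16]{GSz10}, the stabilizer of $p$ in $G^c_{T'^\perp}$ is trivial. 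In other words $p$ is honestly \emph{stable} for $G^c_{T'^\perp}$. After conjugating $T'$ to $T$ inside $G^c_p$ one applies ordinary Kempf--Ness for the subgroup to get $q\in G^c_{T^\perp}\cdot p$ with $\mathrm{proj}_{\mathfrak{g}_{T^\perp}}\mu(q)=0$, and a two-line Lie-algebraic argument gives $\mu(q)\in\mathfrak{t}\subset\mathfrak{g}_q$.

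Your route never makes this reduction to honest stability for the subgroup, and that reduction is what makes the convergence work. Two specific gaps. First, the claim that moving in an $\mathfrak{m}$-direction ``strictly increases the root-space component of $\mu$'' is vague and not the relevant mechanism; the correct (and easier) observation is that at any $T$-fixed point $x$ one has $\mu(x)$ in the centraliser of $\mathfrak{t}$, so the gradient $JX_{\mu(x)}$ of $|\mu|^2$ already points along $G^c_{T^\perp}\cdot x$ (the $\mathfrak{t}$-part vanishes at $x$), and no analysis of $\mathfrak{m}$ is needed at all. Second, and more seriously, even granting that the flow stays in $G^c_{T^\perp}\cdot p$, you still need its limit to lie in the orbit rather than the boundary --- that is precisely stability of $p$ for $G^c_{T^\perp}$, which you have not established. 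The references \cite{MR00,Tel04} supply Hilbert--Mumford and Kempf--Ness once stability for the relevant group is in hand, but they do not give you the passage from relative stability for $G^c$ to stability for $G^c_{T^\perp}$; that passage \emph{is} the content of the proposition, and the paper obtains it by the change-of-maximal-compact trick rather than by flow.
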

\begin{proof}
  Using \eqref{eq:muK}, we can choose a maximal compact subgroup
  $K\subset G^c$ such that $\mu_K(p)\in \mathfrak{k}_p$. Let
  $T'\subset K_p$ be a maximal torus. It follows then that $p$ has
  trivial stabilizer in the group $K_{T'^\perp}$, and 
  \begin{equation}\label{eq:projk}
    \mathrm{proj}_{\mathfrak{k}_{T'^\perp}} \mu_K(p) = 0,
  \end{equation}
  so from \cite[Lemma 16]{GSz10} we find that the stabilizer of $p$ in
  $G^c_{T'^\perp}$ is trivial. This implies that $T'^c$ is a maximal
  torus in $G^c_p$. We can therefore conjugate $T'^c$ into $T^c$ using
  an element of $G^c_p$, and by taking the corresponding conjugate of
  the maximal compact $K$, we can assume that $T'=T$.

  From \eqref{eq:projk} we then have that $p$ is stable for the action
  of $G^c_{T^\perp}$, and in particular using the maximal compact
  subgroup $G_{T^\perp}$ we find that there is a point $q\in
  G^c_{T^\perp}\cdot p$ such that
  \begin{equation}\label{eq:muq}
    \mathrm{proj}_{\mathfrak{g}_{T^\perp}} \mu(q) = 0.
  \end{equation}
  Since elements in $G^c_{T^\perp}$ commute with $T$, we have that
  $T\subset G_q$. Since $\mu(q)$ is $G_q$ invariant, we have that
  $\mu(q)$ commutes with $\mathfrak{t}$. It then follows from
  \eqref{eq:muq} that $\mu(q)\in\mathfrak{t}$, and in particular
  $\mu(q)\in\mathfrak{g}_q$. This is what we set out to prove. 
\end{proof}

We will also need a K\"ahler version of the Hilbert-Mumford criterion,
developed in \cite{MR00} and \cite{Tel04}. 
For any
$\xi\in\mathfrak{g}$ and $p\in M$ we define the weight
\begin{equation}\label{eq:weight}
 W(p,\xi) = \lim_{t\to -\infty} \langle\mu(e^{it\xi}\cdot p),
\xi\rangle. 
\end{equation}
We then have the following result from Teleman~\cite{Tel04}.
\begin{prop}\label{prop:HM}
  Suppose that the stabilizer $G^c_p$ is trivial. 
\begin{enumerate}
\item The point $p\in M$ is stable, i.e. 
there is $q\in G^c\cdot p$ with $\mu(q)=0$, if and only
  if there is a constant $\delta > 0$, such that $W(p,\xi) > \delta$
  for all $\xi$ with $\Vert \xi\Vert =1$. 
\item The point $p\in M$ is semistable, i.e. there is $q\in
  \overline{G^c\cdot p}$ with $\mu(q)=0$, if and only if $W(p,\xi)
  \geqslant 0$ for all $\xi\in\mathfrak{g}$. 
\end{enumerate}
\end{prop}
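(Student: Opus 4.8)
The plan is to deduce both statements from the geodesic convexity of a Kempf--Ness type functional on the symmetric space $G^c/G$, exactly as in \cite{MR00,Tel04}; I describe the structure rather than the (routine) sign bookkeeping. Assume $G^c_p$ is trivial, so that $G^c_{p'}$ is trivial for every $p'$ in the orbit $G^c\cdot p$. I would introduce the functional $\Psi_p$ on $G^c/G$, defined by fixing the base point $o = eG$ and integrating along paths the closed ``moment-map one-form'' associated to $p$; its derivative along the geodesic $t\mapsto e^{it\xi}\cdot o$ through $o$ in a direction $\xi\in\mathfrak g$ is $\pm\langle\mu(e^{it\xi}\cdot p),\xi\rangle$. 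The basic computation
\[ \frac{d}{dt}\langle \mu(e^{it\xi}\cdot p),\xi\rangle = \pm\Vert X_\xi\Vert^2_{e^{it\xi}\cdot p}, \]
with $X_\xi$ the holomorphic vector field generated by $\xi$ (the sign fixed by the conventions in \eqref{eq:muK}), shows at once that $t\mapsto\langle\mu(e^{it\xi}\cdot p),\xi\rangle$ is monotone and that $\Psi_p$ is geodesically convex --- strictly so along every geodesic, since all the stabilizers in the orbit are trivial. Using the compatibility \eqref{eq:muK} of the moment maps one then checks that the critical points of $\Psi_p$ are precisely the cosets corresponding to points $q\in G^c\cdot p$ with $\mu(q)=0$.

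First I would identify $W(p,\xi)$ with an asymptotic slope of $\Psi_p$. Since $M$ is compact, $\mu$ is bounded, so the monotone function $t\mapsto\langle\mu(e^{it\xi}\cdot p),\xi\rangle$ has finite limits as $t\to\pm\infty$; in particular the limit \eqref{eq:weight} exists, and a change of variables $\xi\leftrightarrow-\xi$, $t\leftrightarrow-t$ identifies it, up to the overall sign, with $\lim_{t\to+\infty}t^{-1}\Psi_p(e^{it\xi}\cdot o)$, the asymptotic slope of $\Psi_p$ along the geodesic ray determined by $\xi$. I would also record that $W(p,\cdot)$ is positively homogeneous of degree one, so conditions stated on the unit sphere of $\mathfrak g$ and on all of $\mathfrak g$ are equivalent.

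Next comes one-variable convex analysis along each geodesic, combined with the dictionary between extrema of $\Psi_p$ and zeros of $\mu$. For (1): if $W(p,\xi)>\delta$ uniformly on the unit sphere then every asymptotic slope of $\Psi_p$ is bounded below by $\delta$, so $\Psi_p$ is coercive; being strictly convex it attains a minimum, hence a critical point, producing $q\in G^c\cdot p$ with $\mu(q)=0$. Conversely, if such a $q$ exists then $\Psi_p$ attains its minimum; strict convexity forces every asymptotic slope to be strictly positive, and a semicontinuity argument on the compact unit sphere upgrades this to a uniform lower bound $\delta>0$. For (2): if $\mu(q)=0$ for some $q\in\overline{G^c\cdot p}$, then $\Psi_q$ is minimized at $o$ and a limiting argument shows $\Psi_p$ is bounded below, so all its asymptotic slopes are $\geqslant 0$, i.e.\ $W(p,\xi)\geqslant 0$ for all $\xi$; conversely, if $W(p,\xi)\geqslant 0$ for all $\xi$ then $\Psi_p$ is bounded below, and taking a minimizing sequence $g_nG$ the points $g_n^{-1}\cdot p$ subconverge in the compact manifold $M$ to some $q\in\overline{G^c\cdot p}$, which one checks satisfies $\mu(q)=0$.

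The hard part is exactly the analytic step buried in those last clauses --- carried out in \cite{MR00,Tel04} --- of passing between analytic properties of $\Psi_p$ (bounded below, coercive, attaining a minimum) and the existence of zeros of $\mu$ in the orbit or its closure, without the algebro-geometric finiteness available in classical GIT. The two devices that replace it are the compactness of $M$, used to extract convergent subsequences of orbit points and of moment-map trajectories, and the semicontinuity of $\xi\mapsto W(p,\xi)$, used in (1) to convert pointwise positivity into the uniform bound $\delta$.
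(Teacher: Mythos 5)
The paper itself gives no proof of this proposition: it is quoted from Teleman \cite{Tel04} (see also \cite{MR00}), and the only in-text argument is the remark that the uniform bound $\delta>0$ in (1) comes from lower semicontinuity of $\xi\mapsto W(p,\xi)$ on the unit sphere. Your Kempf--Ness outline is exactly the route those references take, so there is no internal proof to compare it against, and your appeal to semicontinuity on the compact sphere is the same point the paper makes.

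Two of the steps you label as routine are, however, the actual content of the cited theorems. First, in (2), an arbitrary minimizing sequence for $\Psi_p$ need not have $\Vert\mu(g_n\cdot p)\Vert\to 0$ (the gradient of a convex function need not tend to zero along a minimizing sequence); one needs the negative gradient flow of $\Psi_p$, along which $\tfrac{d}{dt}\Psi_p=-\Vert\mu\Vert^2$, or Ekeland's principle, before compactness of $M$ can be used to produce $q\in\overline{G^c\cdot p}$ with $\mu(q)=0$. Second, in (1), the weight \eqref{eq:weight} is based at $p$ and is \emph{not} a $G^c$-orbit invariant, so ``strict convexity forces every asymptotic slope to be strictly positive'' is precisely the nontrivial transfer from the minimizer back to the base point: one must identify $W(p,\xi)$ with the asymptotic slope of $\Psi_p$ along geodesic rays from an arbitrary base point (using that asymptotic rays give equal slopes for the convex, Lipschitz $\Psi_p$, and the compatibility \eqref{eq:muK}), and this is where the real work in \cite{MR00,Tel04} lies rather than in sign bookkeeping.
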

\noindent
Note that the strict lower bound $\delta > 0$ in (1) is obtained
because $W(p,\xi)$ as a function on the unit sphere in $\mathfrak{g}$
is lower semicontinuous, being the supremum of a family of continuous
functions.

\subsection{A perturbation problem}
We will now assume that we have two K\"ahler forms $\omega, \eta$ on
$M$, and the action of $G$ is
assumed to be Hamiltonian with respect to both K\"ahler forms. We
have equivariant moment maps
\[ \mu, \nu : M \to \mathfrak{g}. \]
We will therefore have to specify which moment map (or
symplectization) we use when we speak of stable or semistable points. 
In our application we will have $\nu = \mu + \delta_0\Delta\mu$ for
small $\delta_0$, corresponding to the K\"ahler form $\omega -
\delta_0\rho$.  
We will again assume that $p\in M$ has trivial stabilizer. 
Our goal is the following result. 
\begin{thm}\label{thm:perturb}
 Let $p$ have trivial stabilizer. 
Suppose that $\mu_\epsilon : M\to\mathfrak{g}$ is a family of continuous
functions such that
\[ \mu_\epsilon = \mu + \epsilon\nu + O(\epsilon^\kappa), \]
for some $\kappa > 1$. Suppose that for all sufficiently small 
$\epsilon > 0$ we can find  $q\in
G^c\cdot p$ such that $\mu(q) + \epsilon\nu(q)=0$. 
Then for sufficiently small $\epsilon > 0$ we can
find $q\in G^c\cdot p$ such that $\mu_\epsilon(q) = 0$. 
\end{thm}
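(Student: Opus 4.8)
The plan is to reduce everything to the Hilbert--Mumford criterion of Proposition~\ref{prop:HM}, using the weight function $W(p,\xi)$ as a Morse-theoretic substitute for the norm of the moment map. First I would observe that the hypothesis --- solvability of $\mu(q)+\epsilon\nu(q)=0$ in the orbit $G^c\cdot p$ for all small $\epsilon>0$ --- is precisely stability of $p$ for the symplectization associated to the K\"ahler form $\omega+\epsilon\eta'$ (where $\eta'$ is whatever form has moment map $\nu$; in the application $\omega-\delta_0\rho$), with respect to the rescaled moment map $\mu+\epsilon\nu$. By Proposition~\ref{prop:HM}(1) this is equivalent to the existence, for each such $\epsilon$, of a constant $\delta(\epsilon)>0$ with
\[ W_\epsilon(p,\xi) := \lim_{t\to-\infty}\langle (\mu+\epsilon\nu)(e^{it\xi}\cdot p),\xi\rangle = W_\mu(p,\xi) + \epsilon W_\nu(p,\xi) > \delta(\epsilon) \]
for all unit $\xi\in\mathfrak{g}$, where $W_\mu,W_\nu$ are the weights for $\mu,\nu$ respectively and the weight is additive in the moment map because the limit is taken along the same one-parameter subgroup. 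The goal is then to show $W_{\mu_\epsilon}(p,\xi)>0$ for all unit $\xi$ and all small $\epsilon$, which by Proposition~\ref{prop:HM}(1) again gives the conclusion.

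The main step is to control $W_{\mu_\epsilon}(p,\xi) - W_{\mu+\epsilon\nu}(p,\xi)$ uniformly in $\xi$. Here the hypothesis $\mu_\epsilon = \mu+\epsilon\nu+O(\epsilon^\kappa)$ with $\kappa>1$ is crucial: writing $\mu_\epsilon = \mu+\epsilon\nu+\epsilon^\kappa r_\epsilon$ with $r_\epsilon$ bounded, one expects
\[ \big| W_{\mu_\epsilon}(p,\xi) - W_{\mu+\epsilon\nu}(p,\xi)\big| \leqslant C\epsilon^\kappa \]
for all unit $\xi$, with $C$ independent of $\xi$. The subtlety is that $W$ is defined via a limit $t\to-\infty$, so a uniform $C^0$ bound on $\mu_\epsilon - \mu - \epsilon\nu$ on all of $M$ translates directly into a uniform bound on the limits, precisely because $e^{it\xi}\cdot p$ stays in $M$ and $\langle \cdot,\xi\rangle$ has operator norm $1$ when $\|\xi\|=1$; so this step is actually clean once one is careful that the $O(\epsilon^\kappa)$ in the hypothesis is uniform over $M$ (which it should be, as $M$ is compact and the $\mu_\epsilon$ are continuous).

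The heart of the argument, and the step I expect to be the main obstacle, is producing the lower bound $\delta(\epsilon) \geqslant c\epsilon$ --- that is, showing the stability constant for $\mu+\epsilon\nu$ does not decay faster than linearly in $\epsilon$. This does \emph{not} follow from merely knowing $W_\mu+\epsilon W_\nu>0$ for each $\epsilon$; one needs a quantitative version. The idea is to split the unit sphere in $\mathfrak{g}$ into the region where $W_\mu(p,\xi)$ is bounded below by a fixed $\delta_1>0$ --- there $W_{\mu+\epsilon\nu} \geqslant \delta_1 - \epsilon\sup_\xi|W_\nu| \geqslant \delta_1/2$ for small $\epsilon$ --- and the complementary region where $W_\mu(p,\xi)<\delta_1$. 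On the latter region, the assumption that $\mu+\epsilon\nu$ is stable for \emph{every} small $\epsilon$, combined with lower semicontinuity and a compactness/homogeneity argument on the directions where $W_\mu$ degenerates, should force $W_\nu(p,\xi) \geqslant c_1 > 0$ there (otherwise, for $\xi$ with $W_\mu$ near $0$ and $W_\nu \leqslant 0$, $W_{\mu+\epsilon\nu}$ could not be positive for all small $\epsilon$); hence $W_{\mu+\epsilon\nu} \geqslant \epsilon c_1$ on that region. Taking $\delta(\epsilon) = \min(\delta_1/2, c_1\epsilon/2)$ and comparing with the $C\epsilon^\kappa$ error, we get $W_{\mu_\epsilon}(p,\xi) \geqslant c_1\epsilon/2 - C\epsilon^\kappa > 0$ for $\epsilon$ small since $\kappa>1$, which finishes the proof. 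Making the degeneracy/compactness argument on the sphere rigorous --- in the Kähler, possibly non-algebraic setting where $W$ is only lower semicontinuous and the "degenerate directions" need not form a nice subvariety --- is the delicate point and will require care, perhaps passing through the reformulation of relative stability as genuine stability for $G^c_{T^\perp}$ via Proposition~\ref{prop:GT} to reduce to the genuinely stable case.
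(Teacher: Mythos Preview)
Your weight analysis is essentially the same as the paper's: you correctly obtain $W_{\mu+\epsilon\nu}(p,\xi)\geqslant c\epsilon$ on the unit sphere (the paper does this more cleanly by the substitution $\nu\mapsto\mu+\delta\nu$, which reduces to the case where $p$ is genuinely stable for $\nu$, so $W_\nu>c_0$ uniformly and the case split is avoided). The bound $|W_{\mu_\epsilon}-W_{\mu+\epsilon\nu}|\leqslant C\epsilon^\kappa$ is also fine, modulo the fact that the limit defining $W_{\mu_\epsilon}$ need not exist for a function that is merely continuous; you would have to work with $\liminf$.

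The real gap is the final step. Proposition~\ref{prop:HM} is a statement about \emph{moment maps}: the implication ``$W_\mu(p,\xi)>\delta$ for all unit $\xi$ $\Rightarrow$ some $q\in G^c\cdot p$ has $\mu(q)=0$'' rests on the convexity of $\|\mu\|^2$ along orbits of one-parameter subgroups, i.e.\ on the Kempf--Ness picture. The function $\mu_\epsilon$ is only assumed continuous, not a moment map for any symplectic form, so there is no reason the positivity of its weights should force a zero in the orbit. The paper handles this by a topological degree argument instead: it uses the weight bound to find a large radius $R$ with $\|(\mu+\epsilon\nu)(e^{i\xi}\cdot p)\|>\tfrac12\epsilon c_0$ on the $R$-sphere in $\mathfrak g$, observes that $F(\xi)=(\mu+\epsilon\nu)(e^{i\xi}\cdot p)$ has degree $\pm1$ as a map $\partial B(R)\to\mathfrak g\setminus\{0\}$ (since it has a unique, nondegenerate zero inside), and then notes that the $C^0$ bound $|\mu_\epsilon-\mu-\epsilon\nu|<\tfrac14\epsilon c_0$ keeps $F_\epsilon(\xi)=\mu_\epsilon(e^{i\xi}\cdot p)$ homotopic to $F$ through maps missing $0$, hence of the same nonzero degree, hence vanishing somewhere in $B(R)$.
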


\begin{proof}
The assumption implies that for 
each small $\epsilon > 0$ we have a point $q_\epsilon \in G^c\cdot p$
such that
\[ \mu(q_\epsilon) + \epsilon\nu(q_\epsilon) = 0. \]
By compactness of $M$, we can assume up to choosing a subsequence,
that $q_\epsilon \to q_0$ as $\epsilon\to 0$, for some $q_0\in
M$. It follows that  
\[ \mu(q_0) = 0. \]
Since $q_0\in \overline{G^c\cdot p}$, the point $p$ is semistable with
respect to $\mu$. If $p$ were actually stable with respect to $\mu$,
then we could apply Proposition 8 from \cite{GSz10} to find $q\in
G^c\cdot p$ for sufficiently small $\epsilon > 0$ such that
$\mu_\epsilon(q)=0$. The difficulty now is that the point $q_0$ may be
on the boundary of the $G^c$-orbit of $p$.

By the assumption, we can choose a small $\delta > 0$ such that $p$ is
stable with respect to $\mu + \delta\nu$. For $\epsilon \ll \delta$
we have
\[ \frac{\delta}{\delta - \epsilon}\mu_\epsilon = \mu +
\frac{\epsilon}{\delta-\epsilon}(\mu + \delta\nu) +
O(\epsilon^\kappa), \]
and so by replacing $\nu$ by $\mu + \delta\nu$, we can assume that $p$
is stable with respect to $\nu$. 

Let us define the weights $W_\mu(p,\xi)$ and $W_\nu(p,\xi)$ as in
\eqref{eq:weight}. We then know from Proposition~\ref{prop:HM} that
$W_\mu(p,\xi) \geqslant 0$ for all $\xi$, and there is a $c_0 > 0$
such that
\[ W_\nu(p,\xi) > c_0, \text{ for all }\xi\text{ with }\Vert\xi\Vert=1. \]
Fix an $\epsilon > 0$. By linearity we have
$W_{\mu+\epsilon\nu}(p,\xi) > 0$ for all $\xi\not=0$, and in fact
\[ W_{\mu+\epsilon\nu}(p,\xi) > \epsilon c_0,\text{ for all $\xi$ with
  $\Vert\xi\Vert=1$.} \]
Using the compactness of the unit sphere, it follows that there
is a large radius $R > 0$ such that
\begin{equation}\label{eq:normmu}
 \Vert \mu(e^{i\xi}\cdot p) + \epsilon\nu(e^{i\xi}\cdot p)
\Vert > \frac{1}{2}\epsilon c_0,\text{
  for all $\xi$ with $\Vert\xi\Vert = R$}. 
\end{equation}
Since $p$ is stable with respect to $\mu + \epsilon\nu$, 
it follows that there is a unique $\xi_\epsilon$ such
that 
\[ \mu(e^{i\xi_\epsilon}\cdot p) + \epsilon \nu(e^{i\xi_\epsilon}\cdot
p) = 0. \]
Consider the maps $F, F_\epsilon : \mathfrak{g} \to
\mathfrak{g}$ given by 
\[ \begin{aligned}
 F(\xi) &= \mu(e^{i\xi}\cdot p) + \epsilon \nu(e^{i\xi}\cdot p)\\
 F_\epsilon(\xi) &= \mu_\epsilon(e^{i\xi}\cdot p).
\end{aligned}\]
From \eqref{eq:normmu} we know that $F$ induces a map 
\[ F : \partial B(R) \to \mathfrak{g}\setminus \{0\}, \]
where $\partial B(R)$ is the $R$-sphere in $\mathfrak{g}$. By a
homotopy argument we can see that the degree of this map is $\pm 1$, since
$F$ also induces a map
\[ F : \mathfrak{g}\setminus \{\xi_\epsilon\} \to
\mathfrak{g}\setminus\{0\}, \]
while at $\xi_\epsilon$ the derivative of $F$ is an isomorphism. If
$\epsilon$ is chosen sufficiently small, then by our assumption 
\[ | F_\epsilon - F | < \frac{1}{4}\epsilon c_0, \]
so $F_\epsilon$ also defines a map with nonzero degree from $\partial
B(R)$ to $\mathfrak{g}\setminus\{0\}$. But then there must be a
$\xi\in\mathfrak{g}$ such that $F_\epsilon(\xi) =0 $. 
\end{proof}

We also need the following, which is analogous to Proposition 12 in
\cite{GSz10}, but applies in the non-algebraic case as well. 
\begin{prop}\label{prop:alldelta}
Assume again that the stabilizer of $p$ is trivial. 
There is a $\delta_0 > 0$, depending on $\mu, \nu$, such that the
following are equivalent:
\begin{itemize}
\item[(1)] For some $\delta\in(0,\delta_0)$ we can find $q\in G^c\cdot
  p$ such that $\mu(q) + \delta\nu(q)=0$.
\item[(2)] We have $W_\mu(p,\xi) \geqslant 0$ for all $\xi$, and
  $W_\nu(p,\xi) > 0$ for all $\xi\not=0$ for which $W_\mu(p,\xi) =0$.
\item[(3)] For all $\delta\in(0,\delta_0)$ we can find $q\in G^c\cdot
  p$ such that $\mu(q) + \delta\nu(q)=0$.
\end{itemize}
In particular whether or not we can find $q\in G^c\cdot p$ such that
$\mu(q) + \delta\nu(q) =0$ is independent of $\delta\in
(0,\delta_0)$. 
\end{prop}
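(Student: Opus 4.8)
The plan is to prove the three-way equivalence by establishing the cycle $(1)\Rightarrow(2)\Rightarrow(3)$, with $(3)\Rightarrow(1)$ being immediate, and to do so entirely in terms of the Hilbert–Mumford weights $W_\mu(p,\cdot)$ and $W_\nu(p,\cdot)$ via Proposition~\ref{prop:HM}. The first step is to record the two basic facts about the weight function: for fixed $p$, the map $\xi\mapsto W_\sigma(p,\xi)$ is homogeneous of degree $1$ and lower semicontinuous on $\mathfrak{g}$ (being a supremum of continuous functions, as remarked after Proposition~\ref{prop:HM}), and it is additive in the symplectization, so $W_{\mu+\delta\nu}(p,\xi) = W_\mu(p,\xi) + \delta W_\nu(p,\xi)$ for all $\delta>0$. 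Since $p$ has trivial stabilizer, Proposition~\ref{prop:HM}(1) says that for any fixed $\delta>0$ the existence of $q\in G^c\cdot p$ with $\mu(q)+\delta\nu(q)=0$ is equivalent to $W_{\mu+\delta\nu}(p,\xi)>0$ for all $\xi$ with $\Vert\xi\Vert=1$.

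For $(1)\Rightarrow(2)$: suppose $W_\mu(p,\xi_0)<0$ for some $\xi_0\ne 0$; then for every small $\delta$ we have $W_{\mu+\delta\nu}(p,\xi_0) = W_\mu(p,\xi_0) + \delta W_\nu(p,\xi_0) < 0$ once $\delta < |W_\mu(p,\xi_0)|/|W_\nu(p,\xi_0)|$, contradicting stability with respect to $\mu+\delta\nu$; this shows $W_\mu(p,\cdot)\geqslant 0$. Now suppose $W_\mu(p,\xi_1)=0$ but $W_\nu(p,\xi_1)\leqslant 0$ for some $\xi_1\ne 0$; then $W_{\mu+\delta\nu}(p,\xi_1) = \delta W_\nu(p,\xi_1)\leqslant 0$ for all $\delta>0$, again contradicting stability. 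Hence (2) holds, and note this argument works for any single $\delta$ in $(0,\delta_0)$, which is all (1) supplies.

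For $(2)\Rightarrow(3)$, which I expect to be the main obstacle, the task is to produce a \emph{uniform} $\delta_0$ and a uniform positive lower bound for $W_{\mu+\delta\nu}(p,\xi)$ on the unit sphere, valid for all $\delta\in(0,\delta_0)$ simultaneously. The naive pointwise estimate $W_\mu + \delta W_\nu > 0$ fails to be uniform near the zero set of $W_\mu$, since $W_\nu$ need not be bounded below there by the lower semicontinuity alone — it could dip toward $0$. The fix is a compactness-and-splitting argument on the unit sphere $S\subset\mathfrak{g}$: let $Z = \{\xi\in S : W_\mu(p,\xi)=0\}$ (a closed set by lower semicontinuity of $W_\mu$, since $W_\mu\geqslant 0$). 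On $Z$ the hypothesis gives $W_\nu(p,\cdot)>0$, and since $W_\nu$ is lower semicontinuous on the compact set $Z$ it attains a positive minimum $2c_1>0$ there; by lower semicontinuity again there is an open neighborhood $U\supset Z$ in $S$ on which $W_\nu(p,\cdot)>c_1$. On the complement $S\setminus U$, which is compact, $W_\mu(p,\cdot)$ is lower semicontinuous and strictly positive, hence bounded below by some $2c_2>0$; also $W_\nu(p,\cdot)$, being lower semicontinuous on the compact set $S$, is bounded below by some constant $-C$. Choosing $\delta_0 = \min(c_1, c_2/(C+1))$ or similar, one checks directly that for $\delta\in(0,\delta_0)$ and $\xi\in S$: if $\xi\in U$ then $W_{\mu+\delta\nu}\geqslant \delta c_1 > 0$, while if $\xi\in S\setminus U$ then $W_{\mu+\delta\nu}\geqslant 2c_2 - \delta C \geqslant c_2 > 0$. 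In either case $W_{\mu+\delta\nu}(p,\xi)>0$, so by Proposition~\ref{prop:HM}(1) there is $q\in G^c\cdot p$ with $\mu(q)+\delta\nu(q)=0$, and this holds for all $\delta\in(0,\delta_0)$, giving (3). Finally $(3)\Rightarrow(1)$ is trivial, closing the cycle, and the ``in particular'' statement follows since (2) does not mention $\delta$.
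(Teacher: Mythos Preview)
Your proof is correct and follows essentially the same route as the paper's: translate each condition via Proposition~\ref{prop:HM} into positivity of $W_{\mu+\delta\nu}$ on the unit sphere, use additivity of the weight in the symplectization, and for $(2)\Rightarrow(3)$ split the sphere into a neighborhood of the zero set $Z$ of $W_\mu$ (where lower semicontinuity gives a uniform positive lower bound for $W_\nu$) and its compact complement (where $W_\mu$ is bounded below away from zero). The only cosmetic difference is that you make the constants and the choice of $\delta_0$ more explicit than the paper does.
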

\begin{proof}
Let us write $S_1\subset\mathfrak{g}$ for the unit sphere. 
To prove $(1)\Rightarrow (2)$, note that $(1)$ implies that
$W_{\mu+\delta\nu}(p,\xi) > 0$ for all $\xi\not=0$. 
Suppose first that $W_\mu(p,\xi) < 0$
for some $\xi\in S_1$. This would
imply that for all sufficiently small $\delta > 0$ we have $W_{\mu +
  \delta\nu}(p,\xi) < 0$, which is a contradiction if $\delta_0$ is
chosen to be sufficiently small. So $(1)$ implies that $W_\mu(p,
\xi)\geqslant 0$ for all $\xi$. In addition if $\xi\not=0$, but
$W_\mu(p,\xi)=0$, then clearly we must have $W_\nu(p,\xi) > 0$. 

To prove $(2)\Rightarrow (3)$ note that the set of $\xi\in S_1$ 
for which $W_\mu(p,\xi)=0$ is a closed subset in $S_1$
by lower semicontinuity of $W$. It follows
that there is a $c_0 > 0$ such that 
\[ W_\nu(p,\xi) > c_0,\text{ for all $\xi\in S_1$ such that $W_\mu(p,\xi)=0$}.\]
The set of $\xi\in S_1$ for which $W_\nu(p,\xi) > c_0$ is an open set
$U$, whose complement in $S_1$ is compact. Again by lower
semicontinuity, $W_\mu(p,\xi) > c_1$ for some $c_1 > 0$ for all
$\xi\in S_1\setminus U$. The boundedness of $W_\nu(p,\xi)$ for $\xi\in
S_1$ then easily implies that for sufficiently small $\delta$ we have
\[ W_{\mu + \delta\nu}(p,\xi) > 0,\text{ for all }\xi\not=0. \]

The implication $(3)\Rightarrow (1)$ is immediate. 
\end{proof}

\section{The gluing theorem}\label{sec:gluing}
The goal of this section is to state and prove the main gluing theorem
that we will use, namely Theorem~\ref{thm:gluing}. 

\subsection{Preliminary discussion}\label{sec:preliminary}
We will use a technique very similar to that employed
in our earlier work~\cite{GSz10}. The first step is to use cutoff
functions to glue the extremal metric on $M$ to a model metric (the
Burns-Simanca metric) on the blowup of $\mathbf{C}^m$ at the origin,
scaled down by a factor of $\epsilon^2$. The gluing is performed on a
small annulus around the point $p$. This results in a metric
$\omega_\epsilon$ on $\Bl_pM$, whose scalar curvature is controlled,
since the Burns-Simanca metric is scalar flat. The problem of perturbing
$\omega_\epsilon$ to an extremal metric can then be written as 
finding a zero of a map
\begin{equation}\label{eq:F}
	F : C^\infty(\Bl_pM) \times \mathfrak{h} \to C^\infty(\Bl_pM),
\end{equation}
where $\mathfrak{h}$ is the space of Hamiltonian holomorphic vector fields on
$\Bl_pM$. For the exact form of $F$, see Section~\ref{sec:extequation} and
note that in practice we must work with various weighted H\"older
spaces instead of $C^\infty(\Bl_pM)$. 
The main technical difficulty in
constructing an extremal metric on the blowup $\Bl_pM$ is that in general
the space $\mathfrak{h}$ has lower dimension than the space
$\mathfrak{g}$ of Hamiltonian holomorphic vector fields on $M$.
The way this manifests itself in the analysis is that it is more
difficult to find a well-controlled right-inverse for the linearization
of $F$ as $\epsilon\to 0$.
In~\cite{GSz10} we overcome this problem by introducing a more general
operator of the form
\begin{equation}\label{eq:tildeF}
 \widetilde{F} : C^\infty(\Bl_pM) \times \mathfrak{g} \to
C^\infty(\Bl_pM),
\end{equation}
such that if $f\in\mathfrak{h}$, then $\widetilde{F}(\phi,f) =
F(\phi,f)$. So if we find a zero $\widetilde{F}
(\phi, f)=0$ with $f\in\mathfrak{h}$, then
we have an extremal metric. A right-inverse is not hard to construct for
the linearization of
$\widetilde{F}$, so we find a solution $(\phi_p, f_p)$. If we blow up
at a different point $q$, then we obtain a different pair $(\phi_q,
f_q)$. The crucial point is to compute the leading terms in $f_p$ as
 $\epsilon\to 0$. This can be done by finding better approximate
 solutions than $\omega_\epsilon$. In \cite{GSz10} we found that the
first non-trivial term is $\mu(p)$, using a technique similar to
\cite{APS06} to improve the approximate solution. 
A finite dimensional perturbation argument then shows
that if the vector field $\mu(p)$ vanishes at $p$, then for small
$\epsilon$ there is a point
$q\in G^c\cdot p$ such that $f_q$ vanishes at $q$. This gives us an
extremal metric on $\Bl_qM$, but $\Bl_qM$ is biholomorphic to $\Bl_pM$.  

To prove Theorem~\ref{thm:main} we need to find more terms in the
expansion of $f_p$ as $\epsilon\to 0$, so we need to 
find better approximate solutions. This involves some extra terms which we
have not considered in~\cite{GSz10}, see Section~\ref{sec:approx}.

\subsection{Possible generalizations}\label{sec:general}
We will now briefly discuss the
new difficulties that arise in trying to generalize our results. 
\subsubsection{The case $m=2$}
The main issue with the case when $m=2$ is that we need to compute
more terms in the expansion of $f$ in Theorem~\ref{thm:gluing}. This
can most easily be seen in the formula for the Futaki invariant on a
blowup in Corollary~\ref{cor:FutBl}, since the term involving
$\Delta h_v$ vanishes for $m=2$. The algebro-geometric formula has
been computed to more terms in \cite{GSz10}, and the problem is to
construct sufficiently good approximate solutions to obtain a
corresponding expansion of $f$. We believe that this should be
possible, but it needs a deeper analysis of the linearized problem
than what we have performed in the case $m > 2$. 

\subsubsection{More blowup points}
A more significant issue arises when we try to blow up more than one
point, in contrast to previous works~\cite{AP06,AP09,APS06,GSz10},
where the number of points made little difference. The new
complication in Theorem~\ref{thm:main} is that we need to perform a
gluing construction at the points $q_\delta$ in the $G^c$-orbit of
$p$, for which $\mu(q_\delta) +
\delta\Delta\mu(q_\delta)$ vanishes at $q_\delta$, for arbitrarily small
$\delta$. In the borderline case when $p$ is not relatively stable
with respect to $\mu$, the corresponding points $q_\delta$ will
approach the boundary of the $G^c$-orbit of $p$ as $\delta\to 0$. When
there is only one blowup point, then this is not a problem, since the
geometry of the manifolds $\Bl_{q_\delta}M$ is controlled as
$\delta\to 0$. When we blow up an $n$-tuple $(p_1,\ldots,p_n)$,
however, then as we approach the boundary of the orbit, some of the
points may approach each other. In this case the geometry of the
blowup is only controlled as long as the $n$-tuple stays away from the
``large diagonal'' in the $n$-fold product $M\times
M\times\ldots\times M$, where at least two points coincide. This means
that in order to use the same strategy as what we used in the proof of
Theorem~\ref{thm:main}, we would need to obtain results analogous to
Propositions~\ref{prop:inverse1} and \ref{prop:inverse2}, where the
norm of the inverse will now depend on the distance of our $n$-tuple
from the large diagonal in a suitable sense. Alternatively one may try
to make contact with the results on constructing cscK or extremal
metrics on iterated blowups, developed for K\"ahler surfaces in
LeBrun-Singer~\cite{LS93}, Rollin-Singer~\cite{RS07} and
Tipler~\cite{Tip11} for instance. 

\subsubsection{Extremal metrics} When $(M,\omega)$ is cscK, we were
able to obtain a sharp existence result for cscK metrics on the
blowups $\Bl_pM$. Many of the arguments can be adapted with little
difficulty to extremal metrics, however we were not able to show that
when the hypothesis of Theorem~\ref{thm:main} fails, then the blowups
$(\Bl_pM, [\omega_\epsilon])$ are relatively K-unstable for
sufficiently small $\epsilon$. The basic reason is that the inner
product of vector fields lifted to a blowup is not the same as their
inner product on $M$. The inner product enters in the definition of
relative stability, and in order to obtain a sharp existence result,
we would need a version of Theorem~\ref{thm:main} where the inner
product on $\mathfrak{g}$ is also perturbed in order to match with the
inner product of lifted vector fields. In practice this means that we
would need to compute more terms in the expansion of $f$, similarly to
the $m=2$ case. An alternative  approach would be to show
directly that the map $p\mapsto f_p$ above can itself be thought of as
a moment map for a suitable perturbed K\"ahler form on $M$ together
with a perturbed inner product on $\mathfrak{g}$, without necessarily
knowing the expansion explicitly. 

\subsection{Burns-Simanca metric}\label{sec:BurnsSimanca}
To obtain the first approximate solution $\omega_\epsilon$, we want
to glue the extremal
metric $\omega$ on $M$ to a rescaling of a 
suitable model metric on $\Bl_0\mathbf{C}^m$,
ie. on the blowup of $\mathbf{C}^m$ at the origin. This model metric is
a scalar flat metric found by Burns (see LeBrun~\cite{LeBrun88})
for $m=2$ and by
Simanca~\cite{Sim91}
for
$m\geqslant 3$. Away from the exceptional divisor it can be written in
the form
\[ \eta = \ddbar\left( \frac{1}{2}|w|^2 +\psi(w) \right),\]
where $w=(w_1,\ldots,w_m)$ are standard coordinates on $\mathbf{C}^m$.
The function $\psi$ can be found by solving an ODE. We will need the
following result from Gauduchon~\cite{Gau12}
 about the asymptotics of $\psi$.  Note that we use $\ddbar$ as
 opposed to $dd^c$ which introduces a factor of 2 in our formula, and
 also our normalization of the volume of the exceptional divisor is
 different. 
\begin{lem}\label{lem:BSmetric}
	If $m\geqslant 3$ then the K\"ahler potential for a suitable
	scaling of the
	Burns-Simanca metric 
	\[\eta=\ddbar(|w|^2/2 + \psi(w))\]
	satisfies
	\begin{equation}\label{eq:metric}
		\psi(w) = -\frac{1}{2\pi^{m-1}(m-2)}|w|^{4-2m} + 
                d_1|w|^{2-2m} + d_2|w|^{6-4m}
                + O(|w|^{4-4m}),
	\end{equation}
	where $d_1 > 0$. The scaling of $\eta$ is such that the
        exceptional divisor has volume $\frac{1}{(m-1)!}$.  
\end{lem}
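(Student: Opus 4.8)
\textbf{Proof proposal for Lemma~\ref{lem:BSmetric}.}

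The plan is to reduce the statement to an ODE analysis, following the construction of the Burns--Simanca metric. Writing $r = |w|$ away from the exceptional divisor, the metric $\eta = \ddbar(r^2/2 + \psi(r))$ is $U(m)$-invariant, so $\psi$ depends only on $r$, and the scalar-flat condition $\mathbf{s}(\eta) = 0$ becomes a fourth-order ODE for $\psi(r)$. The standard trick is to pass to the variable $t = r^2/2$ and let $F(t)$ be such that the Kähler potential is expressed through $F$; the scalar curvature equation for a $U(m)$-invariant potential of the form $t + F(t)$ reduces (after integrating the obvious divergence structure twice) to a second-order ODE. Scalar-flatness gives that a certain first integral vanishes, which pins down $F$ up to a two-parameter family corresponding to the two homogeneous solutions $r^{4-2m}$ and $r^{2-2m}$ of the relevant linear operator (the Laplacian-type operator appearing at leading order). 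Requiring the metric to close up smoothly across the exceptional divisor $E$ — i.e.\ that the blowup of $\mathbf{C}^m$ at $0$ carries an honest Kähler metric with $[E]$ of the prescribed volume — fixes the coefficient of the leading term $r^{4-2m}$ and forces $d_1 > 0$ (the sign being exactly the content of the Burns--Simanca/ALE condition). I would cite Gauduchon~\cite{Gau12} for the precise normalization bookkeeping, since the $\ddbar$ versus $dd^c$ factor of $2$ and the volume normalization of $E$ are exactly what that reference tracks.

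The heart of the computation is the asymptotic expansion. First I would establish that $\psi(r) \to 0$ as $r\to\infty$ and that $\psi$ admits an asymptotic expansion in powers of $r^{-2}$ (more precisely in the monomials $r^{(4-2m) - 2k}$ and $r^{(2-2m)-2k}$ for $k \geq 0$), which follows from the ODE having regular-singular-type behavior at infinity after the substitution $s = r^{-2}$. The leading term $-\frac{1}{2\pi^{m-1}(m-2)} r^{4-2m}$ is the fundamental solution of the Laplacian on $\mathbf{C}^m \setminus \{0\}$ with the normalization dictated by the volume $\frac{1}{(m-1)!}$ of $E$; here one uses that $\ddbar$ of this term is, to leading order, the pullback of the Fubini--Study-type form on $E$. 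The term $d_1 r^{2-2m}$ is the next homogeneous solution, and its coefficient is a genuine invariant of the metric (not removable by rescaling $w$), with $d_1>0$ by the same positivity that makes the metric ALE of the correct type. Finally $d_2 r^{6-4m}$ is the first \emph{quadratic} correction: it is forced by the nonlinearity of the scalar curvature operator — schematically, plugging the leading term back into the nonlinear part of the ODE produces a source of size $r^{2(4-2m)} = r^{8-4m}$, and solving $\Delta(\cdot) \sim r^{8-4m}$ gives a particular solution of size $r^{6-4m}$, with $d_2$ a definite (computable) constant. All subsequent terms are $O(r^{4-4m})$, which one checks by noting that the next corrections come either from the quadratic term times the leading term (size $r^{(2-2m)+(4-2m)} = r^{6-4m}$, already absorbed if one is careful, or $r^{(4-2m)+(2-2m)}$) or from cubic nonlinearities (size $r^{3(4-2m)} = r^{12-6m}$, which is $O(r^{4-4m})$ precisely when $m \geq 4$; the case $m=3$ needs the explicit solution, where $\psi$ is known in closed form and the expansion can be read off directly).

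The main obstacle I anticipate is not the existence of the expansion — that is routine ODE asymptotics — but rather (i) matching the normalization constants, i.e.\ showing the leading coefficient is \emph{exactly} $-\frac{1}{2\pi^{m-1}(m-2)}$ under the stated volume normalization $\mathrm{Vol}(E) = \frac{1}{(m-1)!}$, which requires carefully relating $\int_E \eta^{m-1}$ to the coefficient via Stokes' theorem and the cohomological identity $[\eta]|_E = -[E]|_E$ up to the scaling; and (ii) proving the \emph{strict} inequality $d_1 > 0$, which is the one qualitative input that cannot come from formal manipulation and instead relies on the global structure of the Burns--Simanca metric (e.g.\ the fact that it is the unique scalar-flat ALE Kähler metric on $\Bl_0\mathbf{C}^m$ in its class, together with a maximum-principle or monotonicity argument on the defining ODE). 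For both points I would lean on Gauduchon~\cite{Gau12}, where these constants are computed explicitly, and simply adapt his formulas to our sign and scaling conventions, rather than rederiving everything from scratch.
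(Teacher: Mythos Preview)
The paper does not prove this lemma at all: it is stated as a result quoted from Gauduchon~\cite{Gau12}, with a remark about the $\ddbar$ vs.\ $dd^c$ convention and the volume normalization, and no proof environment appears. Your proposal therefore goes well beyond what the paper does. Your outline --- reduce to the scalar-flat ODE via the Calabi ansatz, identify $r^{4-2m}$ and $r^{2-2m}$ as the biharmonic homogeneous solutions, fix the leading coefficient by the volume normalization of $E$, and generate $d_2 r^{6-4m}$ as the first nonlinear correction --- is the correct strategy and is exactly what one finds in Gauduchon's derivation. Since you also ultimately cite \cite{Gau12} for the two delicate points (the exact leading constant and the strict inequality $d_1>0$), your proposal and the paper's treatment are in practice the same: invoke Gauduchon.

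Two minor corrections to your sketch, for the record. First, the source term produced by the quadratic nonlinearity is of order $r^{2-4m}$ (one factor of $\ddbar\psi_0\sim r^{2-2m}$ times two further derivatives of another such factor), not $r^{8-4m}$, and the relevant linear operator is $\Delta^2$, not $\Delta$; solving $\Delta^2 u \sim r^{2-4m}$ gives $u\sim r^{6-4m}$, which is the $d_2$ term as you claim. Second, your special concern about $m=3$ is unfounded: the cubic correction yields a term of order $r^{8-6m}$, and $8-6m \le 4-4m$ holds for all $m\ge 2$, so it is absorbed in the $O(r^{4-4m})$ remainder without needing a closed-form solution.
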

The important aspect of this result for us is the formula for the
coefficient of the first term, and the sign of the second. 
Note
that the scaling is chosen in such a way that if we construct
$\omega_\epsilon$ as in Section~\ref{sec:omegaepsilon}, then we
end up with a metric in the class $\pi^*[\omega] - \epsilon^2[E]$. 

\subsection{The metric $\omega_\epsilon$ on $\Bl_pM$}
\label{sec:omegaepsilon}
Suppose as before that $\omega$ is an extremal K\"ahler metric on $M$.
Let $X_\mathbf{s}$ be the Hamiltonian vector field corresponding to the
scalar curvature $\mathbf{s}(\omega)$.  
Write $G$ for the Hamiltonian isometry group of
$(M,\omega)$, so the Lie algebra $\mathfrak{g}$ of $G$ consists
of holomorphic Killing fields with zeros. 

Choose a point $p\in M$ where the vector
field $X_\mathbf{s}$ vanishes, and let $T\subset G_p$ be a torus
fixing $p$ whose Lie algebra contains $X_{\mathbf{s}}$.
Let $H\subset G$ consist of the elements
commuting with $T$ and let us write $\overline{\mathfrak{h}}
\subset C^{\infty}(M)$
for the space of Hamiltonian functions of vector fields in the Lie
algebra of $H$. Note that $\overline{\mathfrak{h}}$ contains
the constants as well. 
Let us also write $\overline{\mathfrak{t}}\subset\overline{\mathfrak{h}}$ 
for the Hamiltonian functions corresponding to the subgroup $T\subset
H$. 

Given a small parameter $\epsilon > 0$,
we will construct an approximate solution to our problem 
on $\Bl_pM$ in the K\"ahler class $\pi^*[\omega] - \epsilon^2[E]$.
For simplicity assume that the exponential map is defined on the unit
ball in the tangent space $T_pM$ (if not, we can scale up the metric
$\omega$). Choose local normal
coordinates $z$ near $p$ such that the group $T$
acts by unitary transformations on the unit ball $B_1$ around $p$ (this
is possible by linearizing the action, see Bochner-Martin~\cite{BM48}
Theorem 8). In
these coordinates we can write
\[ \omega = \ddbar\big(|z|^2/2 + \phi(z)\big),\]
where $\phi=O(|z|^4)$. At the same time 
the Burns-Simanca metric from Section~\ref{sec:BurnsSimanca} has the form
\[ \eta = \ddbar\big(|w|^2/2 + \psi(w)\big).\]
We glue $\epsilon^2\eta$ to $\omega$ using a cutoff function in the
annulus $B_{2r_\epsilon}\setminus B_{r_\epsilon}$ in $M$, where
$r_\epsilon=\epsilon^\alpha$ for some $\alpha < 1$ to be chosen
later. Under the gluing we will have $z = \epsilon w$. 
To do this, let $\gamma:\mathbf{R}\to[0,1]$ be smooth such
that $\gamma(x)=0$ for $x < 1$ and $\gamma(x)=1$ for $x > 2$. Define
\[ \gamma_1(r) = \gamma(r/r_\epsilon), \] and
write $\gamma_2=1-\gamma_1$. Then for small $\epsilon$
we can define a K\"ahler metric
$\omega_\epsilon$ on $\Bl_pM$ which on the annulus $B_1\setminus
B_\epsilon$ is given by 
\[
	\omega_\epsilon = \ddbar\left( \frac{|z|^2}{2} 
	+ \gamma_1(|z|)\phi(z) 
+ \gamma_2(|z|) \epsilon^2\psi(\epsilon^{-1}
z)\right).\]
Moreover outside $B_{2r_\epsilon}$ the metric $\omega_\epsilon=\omega$ while
inside the ball $B_{r_\epsilon}$ we have
$\omega_\epsilon=\epsilon^2\eta$. 
Note that the action of $T$ lifts to $\Bl_pM$ 
giving biholomorphisms,
and that $\omega_\epsilon$ is $T$-invariant.

In order to define the operator $\widetilde{F}$ from
\eqref{eq:tildeF}, we need to lift elements in
$\overline{\mathfrak{h}}$ to $\Bl_pM$. 

\begin{defn}\label{def:lifting} We define a linear map 
\[ \l : \overline{\mathfrak{h}} \to C^\infty(\Bl_pM) \]
as follows. Decompose $\overline{\mathfrak{h}}$ into a direct sum
$\overline{\mathfrak{h}} =
\overline{\mathfrak{t}}\oplus\mathfrak{h}'$ in such a way that each
function in $\mathfrak{h}'$ vanishes at $p$. Each
$f\in\overline{\mathfrak{t}}$ corresponds to a holomorphic Hamiltonian
vector field $X_f$ on $M$ vanishing at $p$. We then define $\l(f)$ to be
the Hamiltonian function with respect to $\omega_\epsilon$ 
of the holomorphic lift of $X_f$ to $\Bl_pM$, normalized so that $f =
\l(f)$ outside $B_1$. For $f\in\mathfrak{h}'$ we define $\l(f) =
\gamma_1 f$ near $p$ using the cutoff function $\gamma_1$ from
above. We can then think of this $\l(f)$ as a function on
$\Bl_pM$. Finally we can extend $\l$ to all of
$\overline{\mathfrak{h}}$ by linearity. 
\end{defn}
Note that in contrast to \cite{GSz10} we are not assuming that
$T\subset G_p$ is a maximal torus. This is necessary for technical
reasons, namely we will want to be able to work with all $T$-invariant
points at the same time. On the other hand
it implies that even if $f\in\overline{\mathfrak{h}}$
corresponds to a holomorphic vector field vanishing at $p$, its lift
$\l(f)$ will not give rise to a holomorphic vector field, unless
$f\in\overline{\mathfrak{t}}$.

We will also need lifts corresponding
to metrics other than $\omega_\epsilon$. If $\Omega = \omega_\epsilon
+ \ddbar\Phi$ and $\Phi$ is $T$-invariant, then we define
\[ \l_\Omega(f) = \l(f) + \frac{1}{2} \nabla\Phi\cdot \nabla f.\]
If $f\in\overline{\mathfrak{t}}$, then $\l_\Omega(f)$ is a Hamiltonian
function for the vector field $X_f$, with respect to $\Omega$. In
particular this has the following consequence.
\begin{lem}\label{lem:intl}
If $f\in\overline{\mathfrak{t}}$, then 
\[ \int_{\Bl_pM} \l_\Omega(f)\,\Omega^m = \int_{\Bl_pM} \l(f)\,\omega_\epsilon^m.\]
\end{lem}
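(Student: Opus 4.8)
The plan is to connect $\omega_\epsilon$ to $\Omega$ by a path of K\"ahler forms and differentiate the integral along it. For $t\in[0,1]$ set $\omega_t=\omega_\epsilon+t\,\ddbar\Phi$; this is a convex combination of the two K\"ahler forms $\omega_\epsilon$ and $\Omega$, hence K\"ahler, and it is $T$-invariant because $\Phi$ is. Write $\widetilde X_f$ for the holomorphic lift of $X_f$ to $\Bl_pM$. Applying the discussion just before the lemma with $t\Phi$ in place of $\Phi$, the function
\[ \l_{\omega_t}(f) = \l(f) + \tfrac t2\,\nabla\Phi\cdot\nabla f \]
is the Hamiltonian of $\widetilde X_f$ with respect to $\omega_t$, and it interpolates between $\l(f)$ at $t=0$ and $\l_\Omega(f)$ at $t=1$. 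So it suffices to show that $I(t):=\int_{\Bl_pM}\l_{\omega_t}(f)\,\omega_t^m$ satisfies $I'(t)=0$.

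The crucial structural point is that, since $\widetilde X_f$ is a fixed Killing field for every $\omega_t$ (again because $\Phi$ is $T$-invariant) and $\l_{\omega_t}(f)$ is its Hamiltonian, the Riemannian gradient $\nabla_{\omega_t}\l_{\omega_t}(f)$ equals $J\widetilde X_f$ up to sign, independently of $t$. Hence the function $\nabla\Phi\cdot\nabla\l_{\omega_t}(f)$ — namely $d\Phi$ applied to $\pm J\widetilde X_f$ — makes no reference to the metric or to $t$; denote it $v$. Taking $t=0$ shows $\nabla\Phi\cdot\nabla f=v$ as well, and therefore $\tfrac{d}{dt}\l_{\omega_t}(f)=\tfrac12 v$.

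The next step is to differentiate, using $\tfrac{d}{dt}\omega_t^m=m\,\ddbar\Phi\wedge\omega_t^{m-1}=(\Delta_{\omega_t}\Phi)\,\omega_t^m$, where $\Delta_{\omega_t}$ denotes the complex Laplacian:
\[ I'(t)=\tfrac12\int_{\Bl_pM}v\,\omega_t^m\;+\;\int_{\Bl_pM}\l_{\omega_t}(f)\,(\Delta_{\omega_t}\Phi)\,\omega_t^m. \]
In the second integral one integrates by parts to move the Laplacian onto $\Phi$, and then uses $\nabla_{\omega_t}\l_{\omega_t}(f)=\pm J\widetilde X_f$ together with the $T$-invariance $\widetilde X_f\Phi=0$ to identify it with $-\tfrac12\int_{\Bl_pM}v\,\omega_t^m$, the constant $\tfrac12$ matching the one in the definition of $\l_\Omega$ (both being dictated by the relation between $\omega_t$ and its underlying metric). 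Thus $I'(t)=0$, so $I(1)=I(0)$, which is the claim.

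The only delicate aspect is bookkeeping: one must check that $\l_{\omega_t}(f)$ really is the $\omega_t$-Hamiltonian of $\widetilde X_f$ with the correct additive constant (this is exactly the content of the paragraph preceding the lemma) and keep careful track of the factors of $2$ in the conventions for $\ddbar$, the complex Laplacian and the metric, so that the two terms in $I'(t)$ cancel exactly rather than up to a constant. Conceptually the statement is that $\int_{\Bl_pM}\l(f)\,\omega_\epsilon^m$ depends only on the equivariant cohomology class of $\omega_\epsilon$ for the lifted $T$-action; one could instead argue this way, observing that $\omega_\epsilon-u\,\l(f)$ and $\Omega-u\,\l_\Omega(f)$ are equivariantly closed and differ by an equivariant coboundary with primitive built from $\Phi$, so that their equivariant integrals — which encode $\int\l(f)\omega_\epsilon^m$ and $\int\l_\Omega(f)\Omega^m$ — agree.
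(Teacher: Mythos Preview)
Your proof is correct and takes essentially the same approach as the paper: the paper's proof is a one-line sketch suggesting exactly what you do, namely to interpolate along $\omega_t=\omega_\epsilon+t\,\ddbar\Phi$ and differentiate, and it likewise mentions the algebro-geometric (i.e.\ equivariant-cohomological) alternative you sketch at the end. You have simply filled in the details that the paper omits.
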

\begin{proof}
This can be checked by using $\Omega_t = \omega_\epsilon +
t\ddbar\Phi$, and differentiating with respect to $t$. Alternatively,
in the algebraic case,
note that the integral of the Hamiltonian function of a holomorphic
vector field can be computed algebro-geometrically, so in particular
it is independent of the metric. 
\end{proof}

\subsection{The extremal metric equation}\label{sec:extequation}
We will now write down what the operators $F$ and $\widetilde{F}$ in
\eqref{eq:F} and \eqref{eq:tildeF} look
like. We have a torus $T\subset G$ fixing $p$, and we seek a
$T$-invariant function $\phi$ on $\Bl_pM$ such that $\omega_\epsilon +
\ddbar\phi$ is an extremal metric. 

We need the following which can also be found in
\cite{APS06}, \cite{GSz10}.
\begin{lem}\label{lem:basiceqn}
	Suppose that $\phi\in C^\infty(\Bl_pM)^T$ and 
	$f\in\overline{\mathfrak{t}}$ such that
	\begin{equation}
		\label{eq:basiceqn}
		\mathbf{s}
		(\omega_\epsilon+\ddbar\phi) - \frac{1}{2}\nabla
		\l(f)\cdot\nabla\phi = \l(f),
	\end{equation}
	where the gradient and inner product are computed with respect
	to the
	metric $\omega_\epsilon$.
	Then $\omega_\epsilon + \ddbar\phi$ is an extremal metric. 
\end{lem}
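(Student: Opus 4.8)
The claim is that the single scalar equation~\eqref{eq:basiceqn} encodes the extremal condition for $\omega_\epsilon + \ddbar\phi$. The plan is to recognize~\eqref{eq:basiceqn} as the statement that the scalar curvature of the perturbed metric equals the Hamiltonian function of a holomorphic vector field, and hence that its gradient is holomorphic. First I would recall that a metric $\Omega$ is extremal precisely when $\nabla^{1,0}\mathbf{s}(\Omega)$ is a holomorphic vector field, equivalently when $\mathbf{s}(\Omega)$ is (up to the relevant normalization) the Hamiltonian function with respect to $\Omega$ of a holomorphic Killing field lying in the maximal torus of the isometry group. Since we are working $T$-equivariantly, and $\omega_\epsilon$ together with $\phi$ is $T$-invariant, the extremal vector field of $\Omega := \omega_\epsilon+\ddbar\phi$ must lie in $\overline{\mathfrak t}$ once $\epsilon$ is small.

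The key computation is to identify the left-hand side of~\eqref{eq:basiceqn}. Given $f\in\overline{\mathfrak t}$, the definition preceding Lemma~\ref{lem:intl} gives $\l_\Omega(f) = \l(f) + \tfrac12\nabla\Phi\cdot\nabla f$ with $\Phi = \phi$, and $\l_\Omega(f)$ is by construction the Hamiltonian function of $X_f$ with respect to $\Omega$. So I would rewrite~\eqref{eq:basiceqn} as
\[
	\mathbf{s}(\Omega) = \l(f) + \tfrac12\nabla\l(f)\cdot\nabla\phi.
\]
The remaining step is to check that the right-hand side here equals $\l_\Omega(f)$, i.e. that $\tfrac12\nabla\l(f)\cdot\nabla\phi$ computed with respect to $\omega_\epsilon$ agrees with $\tfrac12\nabla\Phi\cdot\nabla f$ with $\Phi=\phi$ — but the $\omega_\epsilon$-inner product is symmetric, so $\nabla\l(f)\cdot\nabla\phi = \nabla\phi\cdot\nabla\l(f)$, and since $\l(f)$ agrees with $f$ up to a correction that only affects things through its gradient pairing, this matches the definition of $\l_\Omega(f)$. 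Hence~\eqref{eq:basiceqn} says exactly $\mathbf{s}(\Omega) = \l_\Omega(f)$, which is the Hamiltonian function of the holomorphic vector field $X_f$ with respect to $\Omega$. Therefore $\nabla^{1,0}\mathbf{s}(\Omega) = X_f^{1,0}$ is holomorphic, so $\Omega$ is extremal.

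The main obstacle is bookkeeping the gradient-correction term: one must be careful that the inner product and gradient in~\eqref{eq:basiceqn} are taken with respect to the \emph{fixed} background metric $\omega_\epsilon$, whereas the Hamiltonian identity $\l_\Omega(f) = $ ``Hamiltonian of $X_f$ w.r.t. $\Omega$'' involves the perturbed metric; reconciling these is precisely the content of the identity $\l_\Omega(f) = \l(f) + \tfrac12\nabla\Phi\cdot\nabla f$, which one should verify by differentiating along $\Omega_t = \omega_\epsilon + t\ddbar\phi$ exactly as in the proof of Lemma~\ref{lem:intl}. Once that identity is in hand the equivalence is a one-line rewriting, and the only thing to add is the standard fact that a $T$-invariant metric whose scalar curvature is the Hamiltonian of a vector field in $\overline{\mathfrak t}$ is extremal — this follows because such a Hamiltonian function has holomorphic $(1,0)$-gradient by definition of $\overline{\mathfrak t}$.
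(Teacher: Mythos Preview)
Your argument is correct and is the standard one: equation~\eqref{eq:basiceqn} rearranges to $\mathbf{s}(\Omega) = \l(f) + \tfrac12\nabla\l(f)\cdot\nabla\phi = \l_\Omega(f)$, which by the paragraph preceding Lemma~\ref{lem:intl} is precisely the Hamiltonian of the holomorphic vector field $X_f$ with respect to $\Omega$; hence $\nabla^{1,0}_\Omega\mathbf{s}(\Omega)$ is holomorphic and $\Omega$ is extremal. The paper does not actually supply a proof of this lemma, deferring instead to \cite{APS06} and \cite{GSz10}, where the same computation appears; your write-up is essentially what one finds there.

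One small remark: the sentence in your plan about the extremal vector field of $\Omega$ having to lie in $\overline{\mathfrak{t}}$ ``once $\epsilon$ is small'' is not needed and is slightly misleading. You are not locating the extremal vector field a priori and then matching it to $\mathfrak{t}$; you are simply given $f\in\overline{\mathfrak{t}}$ and verifying directly that $\mathbf{s}(\Omega)$ equals its $\Omega$-Hamiltonian, which already suffices. No smallness of $\epsilon$ enters at this step.
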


In order to solve Equation~(\ref{eq:basiceqn}) as a perturbation problem,
we will write it in the form
\begin{equation}\label{eq:2}
	\mathbf{s}(\omega_\epsilon+\ddbar\phi) -
	\frac{1}{2}\nabla\l(\mathbf{s}+f)\cdot\nabla\phi =
	\l(\mathbf{s}+f),
\end{equation}
where $\mathbf{s}\in\mathfrak{t}$ is the scalar curvature of the
extremal metric $\omega$. 
The advantage of this is that we now
seek $\phi$ and $f$ which are small, or in other words, setting
$\phi=0$ and $f=0$ we get an approximate solution to the
equation. 

For any K\"ahler metric $\Omega$ let us define the operators
$L_{\Omega}$ and $Q_{\Omega}$ by
\begin{equation}\label{eq:scal}
	\mathbf{s}(\Omega + \ddbar\phi) = \mathbf{s}
	(\Omega) +
	L_{\Omega}(\phi) + Q_{\Omega}(\phi),
\end{equation}
where $L$ is the linearized operator. A simple computation shows that
\[ L_{\Omega}(\phi) = -\Delta^2_{\Omega}\phi -
\mathrm{Ric}(\Omega)^{i\bar j}\phi_{i\bar j},\]
and analysing this operator will be crucial later on. Note that we are using the
complex Laplacian here which is half of the usual Riemannian one.  
The
linear operator appearing in the linearization of
Equation~(\ref{eq:2}) is then 
\begin{equation}
	\label{eq:linop}
	(\phi,f) \mapsto  L_{\Omega_\epsilon}(\phi) -
	\frac{1}{2}\nabla\l(\mathbf{s}) \cdot
	\nabla \phi - \l(f),
\end{equation}
which is closely related to the Lichnerowicz operator that we will
discuss in Section~\ref{sec:Lichnerowicz}.

We can now state the main gluing result that we will prove, which
corresponds to finding a zero of the operator $\widetilde{F}$ in
\eqref{eq:tildeF}. The proof of this theorem will appear at the end of
Section~\ref{sec:nonlinear}. 
\begin{thm}\label{thm:gluing}
Fix a torus $T\subset G$ such that $\mathbf{s}\in\overline{\mathfrak{t}}$. 
There are constants $\epsilon_0 > 0$ and $\kappa > 2m$
with the following property. Suppose that $p\in M$ is a fixed point of
$T$. For every $\epsilon\in(0,\epsilon_0)$ we can find $u\in
C^{4,\alpha}(\Bl_pM)^T$ and $f\in\overline{\mathfrak{h}}$ satisfying the
equation
\begin{equation}\label{eq:maineq}
 \mathbf{s}(\omega_\epsilon + \ddbar u) = \l_{\omega_\epsilon + \ddbar
   u}(f) = \l(f) + \frac{1}{2}\nabla\l(f)\cdot \nabla u. 
\end{equation}
In addition the element $f\in\overline{\mathfrak{h}}$ has an expansion
\[ f = \mathbf{s} + C - \epsilon^{2m-2}\left(c_1 -
  \frac{\epsilon^2}{m!}\mathbf{s}(p) \right)\mu(p) - \epsilon^{2m}c_2\Delta\mu(p) +
O(\epsilon^\kappa), \]
for some $\kappa > 2m$, where $C$ is a constant depending on
$\epsilon$, and $c_1,c_2 > 0$. 
\end{thm}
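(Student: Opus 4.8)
The plan is to construct a sequence of successively better approximate solutions to Equation~\eqref{eq:maineq}, tracking the $\overline{\mathfrak h}$-component of the error at each stage, and then to run a fixed-point argument using a right-inverse for the linearized operator \eqref{eq:linop} whose norm we control in terms of a suitable weighted H\"older norm. Concretely, one starts with $\omega_\epsilon$, computes $\mathbf{s}(\omega_\epsilon) - \l(\mathbf{s})$, and observes that on the bulk of $M$ this is $O(\epsilon^{2m})$ (since $\omega$ is extremal and the Burns--Simanca metric is scalar flat), while on the gluing annulus $B_{2r_\epsilon}\setminus B_{r_\epsilon}$ it is governed by the tail of $\psi$, whose leading behavior is dictated by Lemma~\ref{lem:BSmetric}. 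The first correction uses the function on $\Bl_0\mathbf{C}^m$ whose Laplacian (with respect to $\eta$) reproduces the leading annular error; pulling this back and cutting it off produces a better approximate solution, and the residual $\overline{\mathfrak h}$-component at this stage is $-\epsilon^{2m-2}c_1\mu(p)$ exactly as in \cite{GSz10}. To get the next two terms, $\tfrac{\epsilon^{2m}}{m!}\mathbf{s}(p)\mu(p)$ and $-\epsilon^{2m}c_2\Delta\mu(p)$, I would repeat this one more round: expand $\mathbf{s}(\omega_\epsilon + \ddbar v)$ to the next order in $\epsilon$, isolating the contribution of $Q_{\omega_\epsilon}$ applied to the first correction together with the $d_1$-term and the $d_2,\,O(|w|^{6-4m})$-terms in \eqref{eq:metric}, and solve the relevant linear equations on $\mathbf{C}^m$ and on $M$ respectively. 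The appearance of $\Delta\mu(p)$ is exactly the new feature relative to \cite{GSz10} and comes from the fact that perturbing $\omega_\epsilon$ by an expression supported near $p$ feeds into the $\mathrm{Ric}$-term of $L_{\omega_\epsilon}$, which is why one extra term suffices when $m>2$ (for $m=2$ the coefficient $c_2$ would degenerate, cf.\ Section~\ref{sec:general}).

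Next I would set up the functional-analytic framework. Working in weighted spaces $C^{4,\alpha}_{\delta}(\Bl_pM)^T$ adapted to the conformal-type degeneration of $\omega_\epsilon$ near $E$, with weight $\delta$ chosen in the range where the linearized operator $(\phi,f)\mapsto L_{\omega_\epsilon}(\phi) - \tfrac12\nabla\l(\mathbf{s})\cdot\nabla\phi - \l(f)$ from \eqref{eq:linop} is surjective, I would invoke the right-inverse construction that is the content of Propositions~\ref{prop:inverse1} and \ref{prop:inverse2}: one patches together the right-inverse for the Lichnerowicz-type operator on $M$ (where the obstruction is killed precisely because we allow $f$ to range over all of $\overline{\mathfrak h}$, not just $\mathfrak h$) with the right-inverse on $\Bl_0\mathbf{C}^m$ for the model operator, using the cutoff functions $\gamma_1,\gamma_2$. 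The key quantitative input is that this right-inverse $P_\epsilon$ has operator norm bounded by a fixed negative power of $\epsilon$, say $\epsilon^{-N}$, uniformly in the $T$-fixed point $p$ (uniformity in $p$ being the reason for not taking $T$ maximal, as noted after Definition~\ref{def:lifting}).

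With the approximate solution $\omega_\epsilon + \ddbar v_\epsilon$ whose error $E_\epsilon := \mathbf{s}(\omega_\epsilon + \ddbar v_\epsilon) - \l_{\omega_\epsilon+\ddbar v_\epsilon}(f_\epsilon)$ is $O(\epsilon^{\kappa'})$ in the relevant weighted norm for some $\kappa'$ large enough to beat $\epsilon^{-N}$, I would write $u = v_\epsilon + \phi$, $f = f_\epsilon + g$ and rearrange \eqref{eq:maineq} into a fixed-point equation $(\phi,g) = -P_\epsilon\big(E_\epsilon + \mathcal{Q}_\epsilon(\phi,g)\big)$, where $\mathcal{Q}_\epsilon$ collects the quadratic-and-higher terms coming from $Q_{\omega_\epsilon+\ddbar v_\epsilon}$ and from the nonlinearity $\tfrac12\nabla\l(f)\cdot\nabla u$. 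A standard Schauder-type estimate gives $\|\mathcal{Q}_\epsilon(\phi_1,g_1) - \mathcal{Q}_\epsilon(\phi_2,g_2)\| \le C\epsilon^{-N'}(\|(\phi_1,g_1)\| + \|(\phi_2,g_2)\|)\|(\phi_1,g_1)-(\phi_2,g_2)\|$, so the right-hand side is a contraction on a small ball of radius $\sim \epsilon^{\kappa' - N}$ provided $\kappa'$ is chosen large; the Banach fixed-point theorem then produces $(\phi,g)$, and since $\|g\|$ is controlled by the same small radius, the expansion of $f = f_\epsilon + g$ inherits the first three terms of $f_\epsilon$ with remainder $O(\epsilon^\kappa)$ for some $\kappa > 2m$, while elliptic regularity upgrades $u$ to $C^{4,\alpha}$. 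The constant $C$ in the expansion is the freedom to add a constant to $f$ (constants lie in $\overline{\mathfrak h}$), which is fixed by the integral normalization implicit in Lemma~\ref{lem:intl}.

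The main obstacle is the bookkeeping in the second round of improving the approximate solution: one must compute the $\epsilon^{2m}$-order term of $\mathbf{s}(\omega_\epsilon + \ddbar v_\epsilon)$ on the gluing region and correctly match the contributions of the $d_1|w|^{2-2m}$ and $d_2|w|^{6-4m}$ terms of $\psi$ against the error produced by the first correction and against the bulk term $\tfrac{\epsilon^{2m}}{m!}\mathbf{s}(p)\mu(p)$, keeping careful track of which pieces are holomorphic (hence absorbable into a change of the vector field $X_f$, producing the $\mathbf{s}(p)$ correction inside the parenthesis) and which genuinely contribute the new $\Delta\mu(p)$ term. Getting the positivity $c_1, c_2 > 0$ is then a matter of checking signs: $c_1 > 0$ comes, as in \cite{GSz10}, from the negative coefficient $-\tfrac{1}{2\pi^{m-1}(m-2)}$ of $|w|^{4-2m}$ in \eqref{eq:metric}, and $c_2 > 0$ should come from $d_1 > 0$ together with the sign of the Green's function for the Lichnerowicz operator; this last sign computation is the place where I expect the argument to require the most care.
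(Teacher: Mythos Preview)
Your overall strategy—build successively better approximate solutions, then close with a contraction mapping using a controlled right inverse—matches the paper's approach. However, you misidentify the mechanism producing the $\Delta\mu(p)$ term, and this is the heart of the new computation.

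In the paper the $\mu(p)$ and $\Delta\mu(p)$ terms both arise on the $M$ side, not the $\Bl_0\mathbf{C}^m$ side, and not from the Ricci part of $L_{\omega_\epsilon}$. The point is that the rescaled Burns--Simanca potential contributes terms $d_0\epsilon^{2m-2}|z|^{4-2m}$ and $d_1\epsilon^{2m}|z|^{2-2m}$ on the gluing annulus, and to glue well one extends these singularities across all of $M\setminus\{p\}$. This is Lemma~\ref{lem:G}: one finds $G_1, G_2$ on $M_p$ with $G_1\sim |z|^{4-2m}$, $G_2\sim |z|^{2-2m}$ near $p$, satisfying $\mathcal{D}^*_\omega\mathcal{D}_\omega G_i = f_i$ on $M_p$ for some $f_i\in\overline{\mathfrak h}$. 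Distributionally on $M$ this reads $\mathcal{D}^*_\omega\mathcal{D}_\omega G_1 = f_1 + c\,\delta_p$ and $\mathcal{D}^*_\omega\mathcal{D}_\omega G_2 = f_2 - c'\,\Delta\delta_p$. Pairing with any $g\in\mathfrak h$ (which lies in $\ker\mathcal{D}^*_\omega\mathcal{D}_\omega$) immediately gives $\langle f_1,g\rangle = -c\,g(p) = -c\,\langle\mu(p),g\rangle$ and $\langle f_2,g\rangle = c'\,\Delta g(p) = c'\,\langle\Delta\mu(p),g\rangle$. So $\Delta\mu(p)$ appears because the second Green's function carries a $\Delta\delta_p$ source, and $c_2>0$ is then immediate from $d_1>0$ in Lemma~\ref{lem:BSmetric}—no further sign analysis of a Green's function is needed. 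The $\tfrac{\epsilon^{2m}}{m!}\mathbf{s}(p)$ correction likewise comes from adding a further multiple of $G_1$ to $\Gamma$ in Lemma~\ref{lem:Gamma}, not from anything holomorphic being absorbed into $X_f$.

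Two smaller points. First, the right inverses in Propositions~\ref{prop:inverse1} and \ref{prop:inverse2} have norm bounded \emph{independently of $\epsilon$}, not merely by $\epsilon^{-N}$; this is what lets the scheme close with an error of order only $\epsilon^\kappa$, $\kappa>2m$, and is why the expansion of $f$ stops exactly where it does. Second, the paper's second improvement (producing $\Omega_2$) uses specifically the zero-mean inverse of Proposition~\ref{prop:inverse2} with weight in $(3-2m,4-2m)$; the unlabeled lemma preceding \eqref{eq:u2h2}, which computes the integral of the error, is there precisely to check that one can project to the zero-mean space without spoiling the $O(\epsilon^\kappa)$ estimate.
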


Assuming this result, we can prove Theorem~\ref{thm:main}. 
\begin{proof}[Proof of Theorem~\ref{thm:main}] 
By replacing $p$ with a different point in its $G^c$-orbit, we can
assume that the stabilizer $G_p$ is a maximal compact subgroup of the
complex stabilizer $G^c_p$. Since the scalar curvature
$\mathbf{s}$ is $G$-invariant, the new point will also be a
critical point of $\nabla\mathbf{s}$. 

Let $T\subset G_p$ be a maximal torus. Then $T^c\subset G^c_p$ is also
a maximal torus, and writing $\mathfrak{t}$ for the Lie algebra, we
have $\mathbf{s}\in\mathfrak{t}$. Let $H\subset G$ consist of the
elements of $G$ commuting with $T$. As above, we write $\overline{\mathfrak{h}}$
for functions on $M$ whose Hamiltonian vector fields are in
$\mathfrak{h}$. We apply
Theorem~\ref{thm:gluing} to the set $M^T$ of $T$-invariant points in
$M$. For every $q\in M^T$ and $\epsilon\in(0,\epsilon_0)$ we obtain a
$T$-invariant function $u_{q,\epsilon}\in C^{4,\alpha}(\Bl_qM)$ and
$f_{q,\epsilon}\in\overline{\mathfrak{h}}$ such that if
$f_{q,\epsilon}\in\overline{\mathfrak{t}}$, then we have an extremal
metric on $\Bl_qM$. From Theorem~\ref{thm:gluing} we know that
\[ f_{q,\epsilon} = \mathbf{s} + C - \epsilon^{2m-2}\left(c_1 -
  \frac{\epsilon^2}{m!} \mathbf{s}(q)\right)\mu(q) - \epsilon^{2m}c_2
\Delta\mu(q) + O(\epsilon^\kappa), \]
where $\kappa > 2m$, $C$ is a constant depending on
$\epsilon$, and $c_1,c_2 > 0$. Let us write
\[ \begin{aligned}
\mu_\epsilon(q) &= -\epsilon^{2-2m}\left(c_1 -
  \frac{\epsilon^2}{m!}\mathbf{s}(q)\right)^{-1} (f_{q,\epsilon} -
\mathbf{s} - C) \\ &= \mu(q) + \epsilon^2\frac{c_2}{c_1}\Delta\mu(q) +
O(\epsilon^{\kappa'}),
\end{aligned} \]
where $\kappa' > 2$. Then $f_{q,\epsilon}\in\overline{\mathfrak{t}}$
if and only if $\mu_{\epsilon}(q)\in\overline{\mathfrak{t}}$. 

We will now apply Theorem~\ref{thm:perturb} to the action of $G^c_{T^\perp}$ on
$M$, where $G_{T^\perp}\subset H$ is the group introduced in
Section~\ref{sec:finitedim}. The corresponding 
moment maps $\mu_{T^\perp}$ and $\Delta\mu_{T^\perp}$ 
are $\mu$, $\Delta\mu$ projected to
$\mathfrak{g}_{T^\perp}$. From the assumption of Theorem~\ref{thm:main},
together with Proposition~\ref{prop:GT}, we know that for some
sufficiently small $\delta_0 > 0$ there is a point $q$ in the
$G^c_{T^\perp}$-orbit of $p$ such that 
\[ \mu(q) + \delta_0\Delta\mu(q) \in \mathfrak{g}_q. \]
Moreover, since $T^c$ is a maximal torus in $G_p^c$, and $G_{T^\perp}$
commutes with $T$, we have $\mathfrak{g}_q = \mathfrak{t}$.  It
follows that
\[ \mu_{T^\perp}(q) + \delta_0\Delta\mu_{T^\perp}(q) = 0. \]
Proposition~\ref{prop:alldelta} and 
Theorem~\ref{thm:perturb} now imply that for all sufficiently small
$\epsilon > 0$ we can find $q\in G^c_{T^\perp}\cdot p$ such that 
\begin{equation}\label{eq:mueps}
 \mathrm{pr}_{\mathfrak{g}_{T^\perp}}\mu_\epsilon(q) = 0. 
\end{equation}
By construction, $\mu_\epsilon(q) \in \overline{\mathfrak{h}}$,
i.e. $\mu_\epsilon(q)$ commutes with $T$, so \eqref{eq:mueps} implies
that $\mu_\epsilon(q) \in \overline{\mathfrak{t}}$. This implies that
$f_{q,\epsilon}\in\overline{\mathfrak{t}}$, and so we have obtained an
extremal metric on $\Bl_qM$, in the class $\pi^*[\omega] -
\epsilon^2[E]$.  Since $q\in G^c\cdot p$, the manifold
$\Bl_qM$ is biholomorphic to $\Bl_pM$.
\end{proof}

\subsection{The Lichnerowicz operator}\label{sec:Lichnerowicz}
For any K\"ahler metric
$\Omega$ on a manifold $X$ we have the operator
\[ \mathcal{D}_{\Omega}:C^\infty(X) \to \Omega^{0,1}(T^{1,0}X),\]
given by $\mathcal{D}(\phi) = \dbar\nabla^{1,0}\phi$ where $\dbar$ is
the natural $\dbar$-operator on the holomorphic tangent bundle. 
The Lichnerowicz operator is then the fourth order operator
\[ \mathcal{D}^*_{\Omega}\mathcal{D}_{\Omega} 
: C^\infty(X)\to C^\infty(X),\]
whose significance is that the kernel consists of precisely those 
functions whose gradients are holomorphic vector fields.
The relation to the operator in Equation~(\ref{eq:linop}) is that a
computation (see eg. LeBrun-Simanca~\cite{LS94}) shows that
\begin{equation}\label{eq:Lichn}
	\mathcal{D}^*_{\Omega}\mathcal{D}_{\Omega}(\phi)
	= -L_{\Omega}(\phi) + \frac{1}{2}\nabla
	\mathbf{s}(\Omega)\cdot\nabla\phi.
\end{equation}
When comparing this to Equation~\eqref{eq:linop}, note that in
general $\mathbf{s}(\Omega_\epsilon)$ is not equal to
$\l(\mathbf{s})$. The difference will be sufficiently small though.

\subsection{The Lichnerowicz operator on weighted spaces}
As in Arezzo-Pacard~\cite{AP06,AP09}, Arezzo-Pacard-Singer~\cite{APS06}
and also~\cite{GSz10}, we need to study the invertibility of the
linearized operator between suitable weighted H\"older spaces on the
blowup $\Bl_pM$. First we
need to understand the behaviour the Lichnerowicz operator on weighted
spaces on the manifolds $M\setminus\{p\}$ and $\Bl_p\mathbf{C}^m$, and
then obtain results about the blowup by ``gluing'' these spaces. This
section is parallel to Section 5.1 in~\cite{GSz10}, but we need slightly
different results. 

Let us first consider $M_p=M\setminus\{p\}$ with the metric $\omega$. For
functions $f:M_p \to\mathbf{R}$ we define the weighted norm
\[ \Vert f\Vert_{C^{k,\alpha}_\delta(M_p)} = \Vert
f\Vert_{C^{k,\alpha}_\omega(M\setminus B_{1/2})}
+ \sup_{r < 1/2} r^{-\delta}
\Vert f\Vert_{C^{k,\alpha}_{r^{-2}\omega}(B_{2r}
\setminus B_r)}.\]
Here the subscripts $\omega$ and $r^{-2}\omega$ indicate the metrics
used for computing the corresponding norm. The weighted space
$C^{k,\alpha}_\delta(M_p)$ consists of functions on
$M\setminus\{p\}$ which are locally in $C^{k,\alpha}$ and whose
$\Vert\cdot\Vert_{C^{k,\alpha}_\delta}$ norm is finite.

We need the following result, which is Proposition 17 in~\cite{GSz10}.
As before, we have a torus $T\subset G$
fixing the point $p$, and $H\subset G$ is the centralizer of $T$. 
\begin{prop}\label{prop:Mp}
	If $\delta < 0$, $\delta$ is not an integer,   
	and $\alpha\in(0,1)$, then the operator 
	\[ \begin{aligned} 
		C^{k,\alpha}_{\delta}(M_p)^T \times
		\overline{\mathfrak{t}}
		&\to C^{k-4,\alpha}_{\delta-4}(M_p)^T \\
		(\phi, f) \mapsto \mathcal{D}^*_\omega
		\mathcal{D}_\omega \phi - f 
	\end{aligned}\]
	has a bounded right-inverse. 
\end{prop}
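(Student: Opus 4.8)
The plan is to establish Proposition~\ref{prop:Mp} by combining the standard mapping theory for the Lichnerowicz operator $\mathcal{D}^*_\omega\mathcal{D}_\omega$ on the punctured manifold $M_p$ with an explicit correction coming from the finite-dimensional space $\overline{\mathfrak{t}}$. The underlying analytic input is the Fredholm theory for elliptic operators on manifolds with asymptotically conical (here, asymptotically Euclidean near the puncture) ends: for the flat model $\mathbf{C}^m\setminus\{0\}$, the operator $\mathcal{D}^*\mathcal{D} = \Delta^2$ is an isomorphism between $C^{k,\alpha}_\delta$ and $C^{k-4,\alpha}_{\delta-4}$ precisely when $\delta$ avoids the ``indicial roots'' — the integers — and this transfers to $M_p$ with the metric $\omega$ with at most a finite-dimensional cokernel, which can be computed from the homogeneous harmonic-type solutions. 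I would first record this: for $\delta < 0$ noninteger, $\mathcal{D}^*_\omega\mathcal{D}_\omega : C^{k,\alpha}_\delta(M_p) \to C^{k-4,\alpha}_{\delta-4}(M_p)$ is surjective onto a subspace of finite codimension, and in fact (by the duality pairing with the cokernel, which consists of bounded solutions of the formal adjoint equation that are constant plus Hamiltonian-type) the obstruction to surjectivity is exactly the failure of the right-hand side to be $L^2$-orthogonal to the restrictions to $M_p$ of functions whose gradients are holomorphic vector fields on $M$ — i.e.\ to a space containing $\overline{\mathfrak{h}}$.

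The second step is to identify precisely the cokernel in the $T$-invariant setting and match it with $\overline{\mathfrak{t}}$. Since we are working on the $T$-invariant subspaces $C^{k,\alpha}_\delta(M_p)^T$, the relevant kernel of $\mathcal{D}_\omega$ on $M$ is the space of $T$-invariant functions whose gradients are holomorphic, which by the choice of $H$ as centralizer of $T$ is exactly $\overline{\mathfrak{h}}$. However, the functions in $\overline{\mathfrak{h}}$ that are genuinely obstructions on the punctured manifold — i.e.\ those that do not decay and hence pair nontrivially against $C^{k,\alpha}_{\delta-4}$ elements via integration by parts — are, after the decomposition $\overline{\mathfrak{h}} = \overline{\mathfrak{t}}\oplus\mathfrak{h}'$ with $\mathfrak{h}'$ vanishing at $p$: the functions in $\overline{\mathfrak{t}}$, which are nonzero at $p$, contribute a true obstruction; the functions in $\mathfrak{h}'$ vanish at $p$ and (for the range of $\delta$ in question) can be absorbed, since their Hamiltonian-vanishing at $p$ lets them be removed by an element of the image. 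Thus the precise statement is that $\mathcal{D}^*_\omega\mathcal{D}_\omega$ alone has cokernel isomorphic to $\overline{\mathfrak{t}}$, and this is canceled exactly by also allowing subtraction of an arbitrary $f\in\overline{\mathfrak{t}}$.

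The third step is bootstrapping surjectivity of the corrected map $(\phi,f)\mapsto \mathcal{D}^*_\omega\mathcal{D}_\omega\phi - f$ to the existence of a \emph{bounded} right-inverse. Given the finite-codimension surjectivity plus exact cancellation of the cokernel, the map is surjective; one then uses the closed graph theorem (the map is bounded between Banach spaces, is surjective, and one checks the preimage can be taken with a uniform bound by splitting off a fixed finite-dimensional complement of the kernel) to produce a bounded linear right-inverse. Concretely, I would fix a closed complement $W$ of $\ker$ inside $C^{k,\alpha}_\delta(M_p)^T\times\overline{\mathfrak{t}}$ and invoke the open mapping theorem on the restriction to $W$; the $T$-equivariance is preserved throughout because $\omega$, $\mathcal{D}^*_\omega\mathcal{D}_\omega$, and the splitting of $\overline{\mathfrak{h}}$ are all $T$-invariant.

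The main obstacle is the second step: pinning down exactly which bounded solutions of the adjoint equation span the cokernel, and verifying that the ``$\mathfrak{h}'$ part'' genuinely does not obstruct while the ``$\overline{\mathfrak{t}}$ part'' exactly does. This requires a careful look at the indicial roots of $\mathcal{D}^*_\omega\mathcal{D}_\omega$ at the puncture — knowing which homogeneous solutions of $\Delta^2$ on the cone appear at weights between $\delta$ and $0$ — together with the observation that near $p$ a function in $\mathfrak{h}'$ looks like a linear function in normal coordinates (weight $1 > \delta$ is not the issue; rather, its Hamiltonian constraint forces it into the image). Since this is precisely Proposition 17 of~\cite{GSz10}, the cleanest route is to cite that result directly, noting that the only change from the setting there is the absence of the maximality assumption on $T$, which does not affect the mapping properties — it only enlarges $H$, hence $\overline{\mathfrak{h}}$, but the decomposition $\overline{\mathfrak{h}} = \overline{\mathfrak{t}}\oplus\mathfrak{h}'$ and the identification of the cokernel with $\overline{\mathfrak{t}}$ go through verbatim.
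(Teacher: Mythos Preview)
The paper gives no proof of this proposition; it simply records it as Proposition 17 of \cite{GSz10}. Your proposal ultimately does the same --- after sketching the Fredholm picture you explicitly say the cleanest route is to cite that result --- so the approaches agree.

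Your accompanying outline is in the right spirit, but the one point you flag as the ``main obstacle'' really is the whole content: the claim that for $T$-invariant data the cokernel of $\mathcal{D}^*_\omega\mathcal{D}_\omega$ on $C^{k,\alpha}_\delta(M_p)$ is matched precisely by $\overline{\mathfrak{t}}$, with the $\mathfrak{h}'$ part ``absorbed'', is not justified by the heuristic you give (vanishing of the Hamiltonian at $p$ does not by itself place an element of $\overline{\mathfrak{h}}$ in the image; one has to track the boundary terms in the integration by parts carefully across the relevant indicial weights). You are right not to attempt this here and to defer to \cite{GSz10}. Your remark that dropping the maximality assumption on $T$ does not affect the mapping-property argument is a useful observation, since that is the only respect in which the present setup differs from the one in \cite{GSz10}.
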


Let us turn now to the manifold $\Bl_0\mathbf{C}^m$ with the
Burns-Simanca metric $\eta$. The relevant weighted H\"older norm is now
given by
\[ \Vert f\Vert_{C^{k,\alpha}_\delta(\Bl_0\mathbf{C}^m)} = \Vert
f\Vert_{C^{k,\alpha}_\eta(B_2)} + \sup_{r > 1} r^{-\delta} \Vert
f\Vert_{C^{k,\alpha}_{r^{-2}\eta}(B_{2r}\setminus B_r)}.\]
Here we abused notation slightly by writing $B_r\subset Bl_0
\mathbf{C}^m$ for the set where $|z|<r$ (ie. the pullback of the
$r$-ball in $\mathbf{C}^m$ under the blowdown map).

The following is Proposition 18 from~\cite{GSz10}. 
\begin{prop}\label{prop:Bl0}
If $\delta > 4-2m$ the operator
\[ \begin{aligned}
  C^{k,\alpha}_\delta(\Bl_0\mathbf{C}^m) &\to
  C^{k-4,\alpha}_{\delta-4}(\Bl_0\mathbf{C}^m) \\
\phi &\mapsto \mathcal{D}^*_\eta\mathcal{D}_\eta\phi 
\end{aligned}\]
has a bounded right inverse.

If $\delta\in(3-2m,4-2m)$, let $\chi$ be a compactly supported
function on $\Bl_0\mathbf{C}^m$ with non-zero integral. The operator
\[ \begin{aligned}
    C^{k,\alpha}_\delta(\Bl_0\mathbf{C}^m)\times\mathbf{R} &\to
    C^{k-4,\alpha}_{\delta-4}(\Bl_0\mathbf{C}^m), \\
    (\phi, t) &\mapsto \mathcal{D}^*_\eta\mathcal{D}_\eta(\phi) +
    t\chi
  \end{aligned}\]
has a bounded right inverse. 
\end{prop}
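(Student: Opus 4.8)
The plan is to follow the weighted-analysis scheme of Arezzo--Pacard~\cite{AP06,AP09} and Arezzo--Pacard--Singer~\cite{APS06}: establish the Fredholm properties of $\mathcal{D}^*_\eta\mathcal{D}_\eta$ on the ALE manifold $(\Bl_0\mathbf{C}^m,\eta)$ and then compute the kernel and cokernel by hand. First I would record, using Lemma~\ref{lem:BSmetric}, that near infinity $\eta$ differs from the Euclidean metric on $\mathbf{C}^m$ by $\ddbar\psi$ with $\psi=O(|w|^{4-2m})$, so that $\mathcal{D}^*_\eta\mathcal{D}_\eta=\Delta_0^2+E$, where $\Delta_0$ is the flat Laplacian and $E$ is a fourth-order operator with coefficients decaying like $|w|^{2-2m}$. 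The standard weighted elliptic theory for asymptotically Euclidean operators (Lockhart--McOwen, Melrose; see also Pacard~\cite{Pac10}) then gives, for every $\delta$ outside the discrete set $\mathcal{Z}$ of indicial roots: the map $\mathcal{D}^*_\eta\mathcal{D}_\eta:C^{k,\alpha}_\delta\to C^{k-4,\alpha}_{\delta-4}$ is Fredholm with closed range and a bounded generalized inverse, and every solution of $\mathcal{D}^*_\eta\mathcal{D}_\eta\phi=0$ in such a space has a polyhomogeneous expansion at infinity with exponents in $\mathcal{Z}$. The relevant part of $\mathcal{Z}$ is $\{4-2m,3-2m,\dots\}\cup\{0,1,2,\dots\}$; the exponent $4-2m$ is exactly the one responsible for the leading term of $\psi$, and since $m\geq3$ we have $4-2m<0$, so there is a genuine gap $(4-2m,0)$ free of indicial roots.

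Next I would compute the kernel. If $\delta<0$, then any $\phi\in C^{k,\alpha}_\delta$ with $\mathcal{D}^*_\eta\mathcal{D}_\eta\phi=0$ in fact decays like $O(|w|^{4-2m})$ by the expansion above; since $m\geq3$ this is fast enough to kill the boundary terms in the Green's formula for $\mathcal{D}^*_\eta\mathcal{D}_\eta$, whence $\int_{\Bl_0\mathbf{C}^m}|\mathcal{D}_\eta\phi|^2\,\eta^m=0$. Thus $\nabla^{1,0}\phi$ is a holomorphic vector field on $\Bl_0\mathbf{C}^m$ tending to zero at infinity, which forces $\phi$ to be constant and therefore $\phi\equiv0$. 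So the kernel is trivial for every $\delta<0$.

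Then I would identify the cokernel by duality. Using the $L^2(\eta^m)$ pairing between weighted H\"older spaces and the formal self-adjointness of $\mathcal{D}^*_\eta\mathcal{D}_\eta$, the cokernel of $\mathcal{D}^*_\eta\mathcal{D}_\eta:C^{k,\alpha}_\delta\to C^{k-4,\alpha}_{\delta-4}$ is isomorphic to the kernel of $\mathcal{D}^*_\eta\mathcal{D}_\eta$ on $C^{k',\alpha}_{4-2m-\delta}$. If $\delta>4-2m$, the dual weight $4-2m-\delta$ is negative, so by the previous step the kernel vanishes, the operator is surjective, and the bounded generalized inverse of the first step is then a bounded right inverse; this proves the first assertion. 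If instead $\delta\in(3-2m,4-2m)$, the dual weight lies in $(0,1)$, and there the kernel of $\mathcal{D}^*_\eta\mathcal{D}_\eta$ is exactly the constants: a solution bounded by $O(|w|^{\delta'})$ with $\delta'\in(0,1)$ has leading asymptotic term a degree-zero homogeneous solution of $\Delta_0^2$, i.e. a constant $c$, and then $\phi-c$ decays (next exponent $\leq4-2m$), hence vanishes by the kernel computation. So the cokernel is one-dimensional, spanned by the functional $g\mapsto\int_{\Bl_0\mathbf{C}^m}g\,\eta^m$, and adjoining the term $t\chi$ with $\int_{\Bl_0\mathbf{C}^m}\chi\,\eta^m\neq0$ makes the enlarged map onto. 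That map is also injective: if $\mathcal{D}^*_\eta\mathcal{D}_\eta\phi+t\chi=0$, pairing with $1$ gives $t\int_{\Bl_0\mathbf{C}^m}\chi\,\eta^m=0$, so $t=0$ and then $\phi\equiv0$ since $\delta<0$. Hence it is an isomorphism and, by the open mapping theorem, has a bounded inverse, in particular a bounded right inverse.

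The main obstacle is the input from weighted elliptic theory in the first paragraph, and in particular the precise determination of the indicial roots of $\Delta_0^2$ together with the polyhomogeneous expansion of solutions: this is what pins down the gap $(4-2m,0)$, lets one upgrade ``$\phi\in C^{k,\alpha}_\delta$, $\delta<0$'' to the sharp decay $O(|w|^{4-2m})$ needed for the integration by parts, and identifies the kernel at a dual weight in $(0,1)$ as precisely the constants, so that the single extra parameter $t\chi$ suffices. One must also check that the decay rate $|w|^{2-2m}$ of the perturbation $E$ furnished by Lemma~\ref{lem:BSmetric} exceeds the spacing of consecutive indicial roots, so that $\mathcal{D}^*_\eta\mathcal{D}_\eta$ genuinely behaves like its model $\Delta_0^2$; this is where the hypothesis $m\geq3$ enters.
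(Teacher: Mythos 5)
The paper contains no proof of this proposition: it is quoted as Proposition 18 of \cite{GSz10}, so there is no internal argument to compare against line by line. Your proof is correct, and it is the standard route that the cited source (and Arezzo--Pacard) rely on: weighted Fredholm theory for the ALE metric $\eta$, with $\mathcal{D}^*_\eta\mathcal{D}_\eta$ an $O(|w|^{2-2m})$ perturbation of $\Delta_0^2$; triviality of the kernel at negative weights, since the improved decay $O(|w|^{4-2m})$ kills the boundary terms and $\nabla^{1,0}\phi$ is then a decaying holomorphic vector field on $\Bl_0\mathbf{C}^m$, hence zero; and identification of the cokernel by duality at the weight $4-2m-\delta$, which vanishes for $\delta>4-2m$ and equals the constants for $\delta\in(3-2m,4-2m)$, so that the single parameter $t\chi$ with $\int\chi\neq 0$ restores surjectivity, and injectivity plus the open mapping theorem gives the bounded inverse. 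Your tracking of where $m>2$ enters (the gap $(4-2m,0)$ between indicial roots, and the vanishing of the boundary terms at rate $R^{4-2m}$) is also accurate.

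One caveat: as you state it, the Fredholm/duality input applies only to weights $\delta$ outside the set of indicial roots, whereas the first claim as written allows every $\delta>4-2m$, in particular $\delta\in\{0,1,2,\dots\}$. At such weights the statement genuinely needs qualification: already for the flat model $\Delta_0^2$ at $\delta=4$, a bounded right-hand side whose angular part meets the kernel of the indicial operator forces an $r^4\log r$ term in any solution, so there is no bounded right inverse $C^{k-4,\alpha}_0\to C^{k,\alpha}_4$ in general. This is an imprecision of the statement itself (compare Proposition~\ref{prop:Mp}, which explicitly excludes integer weights) rather than a gap in your argument, and your proof covers the interval $(3-2m,4-2m)$ and the non-indicial part of $(4-2m,\infty)$, which is what is actually true; note only that the paper later invokes the proposition at the indicial weight $\delta=4$ in Lemma~\ref{lem:Psi} (with compactly supported right-hand side), so some additional care is required at that point in any case.
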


\subsection{Weighted spaces on $\Bl_pM$}
We will need to do analysis on the blown-up manifold $\Bl_pM$ endowed
with the approximately extremal metric $\omega_\epsilon$. For this
we define the following weighted spaces, which are simply glued versions
of the above weighted spaces on $M\setminus\{p\}$ and
$\Bl_p\mathbf{C}^m$.

We define the weighted H\"older norms $C^{k,\alpha}_\delta$ by
\[ \Vert f\Vert_{C^{k,\alpha}_\delta} = \Vert
f\Vert_{C^{k,\alpha}_{\omega}(M\setminus B_1)} + \sup_{\epsilon\leqslant
r\leqslant 1/2} r^{-\delta}\Vert
f\Vert_{C^{k,\alpha}_{r^{-2}\omega_\epsilon}(B_{2r}\setminus B_r)} +
\epsilon^{-\delta}\Vert f\Vert_{C^{k,\alpha}_{\eta}(B_\epsilon)}. \]

The subscripts indicate the metrics used to compute the relevant norm.
This is a glued version of the two spaces defined in the previous
section
in the following sense. If $f\in C^{k,\alpha}(\Bl_pM)$ and we think of $\Bl_pM$
as a gluing of $M\setminus\{p\}$ and $\Bl_0\mathbf{C}^m$ then $\gamma_1f$
and $\gamma_2f$ can naturally be thought of as functions on
$M\setminus\{p\}$ and $\Bl_0\mathbf{C}^m$ respectively. Then the norm
$\Vert f\Vert_{C^{k,\alpha}_\delta(\Bl_pM)}$ is comparable to
\[ \Vert \gamma_1f\Vert_{C^{k,\alpha}_\delta(M_p)} + 
\epsilon^{-\delta}\Vert\gamma_2f\Vert_{C^{k,\alpha}_\delta
(\Bl_0\mathbf{C}^m,\eta)}.\]

Another way to think about the norm is that if $\Vert
f\Vert_{C^{k,\alpha}_\delta}\leqslant c$ then $f$ is in
$C^{k,\alpha}(\Bl_pM)$ and also for $i\leqslant k$ we have
\[ \begin{gathered}
	|\nabla^i f| \leqslant c\,\,\text{ for } r\geqslant 1\\
	|\nabla^i f| \leqslant cr^{\delta-i}\,\,\text{ for } \epsilon
	\leqslant r \leqslant 1\\
	|\nabla^i f| \leqslant c\epsilon^{\delta-i}
	\,\,\text{ for } r\leqslant\epsilon.
\end{gathered}\]
The norms here are computed with respect to the metric
$\omega_\epsilon$, and note that on $B_\epsilon$ we have
$\omega_\epsilon=\epsilon^2\eta$. We will often use the following to
compare the different weighted norms:
\[ \Vert f\Vert_{C^{k,\alpha}_\delta} \leqslant \begin{cases} \Vert
  f\Vert_{C^{k,\alpha}_{\delta'}},\text{ if }\delta' > \delta,\\
  \epsilon^{\delta' - \delta}\Vert
  f\Vert_{C^{k,\alpha}_{\delta'}},\text{ if }\delta' < \delta.
\end{cases} \]

Sometimes we will restrict the norm to subsets such as
$C^{k,\alpha}_\delta(M\setminus B_{r_\epsilon})$ and
$C^{k,\alpha}_\delta(B_{2r_\epsilon})$. 
A crucial property of these weighted norms is that
\begin{equation}\label{eq:cutoff}
	\Vert \gamma_i\Vert_{C^{4,\alpha}_0} \leqslant c 
\end{equation}
for some constant $c$ independent of $\epsilon$, where $\gamma_i$ are
the cutoff functions from Section~\ref{sec:omegaepsilon}.

In addition we need
the following lemma about lifting elements of
$\overline{\mathfrak{h}}\subset C^\infty(M)$ to 
$C^\infty(\Bl_pM)$ according to Definition~\ref{def:lifting}. 

\begin{lem}\label{lem:lb}
	For any $f\in\overline{\mathfrak{h}}$ its lifting satisfies
\[
\begin{aligned}
   \Vert \l(f) \Vert_{C^{1,\alpha}_0} &\leqslant c|f|, \\
   \Vert \nabla\l(f)\Vert_{C^{1,\alpha}_0} &\leqslant c|f|,
\end{aligned}\]
for some constant $c$ independent of $\epsilon$.
Here $|\cdot|$ is any fixed norm on $\overline{\mathfrak{h}}$.
\end{lem}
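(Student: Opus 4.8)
The plan is to prove the two estimates separately according to the decomposition $\overline{\mathfrak{h}} = \overline{\mathfrak{t}}\oplus\mathfrak{h}'$ from Definition~\ref{def:lifting}, since the lift $\l$ is defined differently on the two summands. By linearity and the finite-dimensionality of $\overline{\mathfrak{h}}$ (so that all norms $|\cdot|$ on it are equivalent), it suffices to bound $\Vert\l(f)\Vert_{C^{1,\alpha}_0}$ and $\Vert\nabla\l(f)\Vert_{C^{1,\alpha}_0}$ by a fixed constant for $f$ ranging over a basis, or more conveniently to treat a general $f=f_0+f'$ with $f_0\in\overline{\mathfrak{t}}$, $f'\in\mathfrak{h}'$, and show the bound is linear in $|f|$.

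First I would handle the easier piece $f'\in\mathfrak{h}'$. Here $\l(f') = \gamma_1 f'$ near $p$ and $\l(f')=f'$ away from $p$, so globally $\l(f')=\gamma_1 f'$ on $\Bl_pM$ once we agree $\gamma_1\equiv 1$ outside $B_{2r_\epsilon}$. Since $f'$ vanishes at $p$ and is smooth on $M$, we have $|f'|_{C^{1,\alpha}(B_{2r_\epsilon})} = O(r_\epsilon)$ in the unrescaled metric; rescaling to $r^{-2}\omega_\epsilon$ on the annuli $B_{2r}\setminus B_r$ and using $f'=O(r)$, $\nabla f'=O(1)$ one checks $\Vert f'\Vert_{C^{1,\alpha}_1}\leqslant c|f|$, hence $\Vert f'\Vert_{C^{1,\alpha}_0}\leqslant c|f|$ as well (comparing weighted norms, $\delta'=1>0=\delta$). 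Combined with the cutoff bound \eqref{eq:cutoff}, namely $\Vert\gamma_1\Vert_{C^{4,\alpha}_0}\leqslant c$, and the fact that $C^{1,\alpha}_0$ is an algebra up to a fixed constant (multiplication is bounded because the weight $0$ is subadditive), we get $\Vert\gamma_1 f'\Vert_{C^{1,\alpha}_0}\leqslant c|f|$. For $\nabla\l(f')$ one differentiates: $\nabla(\gamma_1 f') = (\nabla\gamma_1)f' + \gamma_1\nabla f'$; the first term is supported in the annulus $B_{2r_\epsilon}\setminus B_{r_\epsilon}$ where $|\nabla\gamma_1|\leqslant c r_\epsilon^{-1}$ and $|f'|\leqslant c r_\epsilon$, so it is $O(1)$ there, and similarly in $C^{1,\alpha}_0$, while the second is controlled as before.

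Next, the term $f_0\in\overline{\mathfrak{t}}$: here $\l(f_0)$ is the Hamiltonian function, with respect to $\omega_\epsilon$, of the holomorphic lift of $X_{f_0}$ to $\Bl_pM$, normalized to agree with $f_0$ outside $B_1$. Away from $B_1$ there is nothing to prove since $\l(f_0)=f_0$ and we are in the fixed region $r\geqslant 1$. On the transition region $r_\epsilon\leqslant r\leqslant 1$ the metric $\omega_\epsilon$ differs from $\omega$ by $\ddbar$ of terms of size $O(r_\epsilon^{\,\cdot})$ and the Hamiltonian function changes by the corresponding correction $\tfrac12\nabla\Phi\cdot\nabla f_0$, which is small uniformly; so $\l(f_0)$ is a uniformly bounded perturbation of $f_0$ in $C^{1,\alpha}_0$ there. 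The genuinely new region is $r\leqslant\epsilon$, i.e. a neighborhood of the exceptional divisor, where $\omega_\epsilon=\epsilon^2\eta$: here the holomorphic vector field $X_{f_0}$ vanishes at $p$, its lift to $\Bl_0\mathbf{C}^m$ is a holomorphic vector field there, and one must show its $\eta$-Hamiltonian function (pulled back via $z=\epsilon w$ and measured in the $\eta$-metric) has $C^{1,\alpha}$ norm $O(1)$ and is $O(1)$ pointwise, which forces the weighted norm with weight $0$ to be bounded. This is where I expect the main obstacle: one needs that a holomorphic vector field on $\mathbf{C}^m$ vanishing to first order at the origin, which lifts to $\Bl_0\mathbf{C}^m$, has a Hamiltonian potential for the Burns–Simanca metric that grows at most linearly in $|w|$ (so that after rescaling and matching normalizations it contributes $O(1)$ to $\Vert\cdot\Vert_{C^{1,\alpha}_0}$). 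This should follow from the explicit asymptotics of $\eta$ in Lemma~\ref{lem:BSmetric} — since $\psi$ decays, $\eta$ is asymptotic to the Euclidean metric, on which a linear vector field has a quadratic potential; the quadratic growth of the potential in $|w|$ is exactly compatible with weight $0$ after the scaling $z=\epsilon w$ on $B_\epsilon$, i.e. $|w|\leqslant 1$ there. One has to be slightly careful because $\overline{\mathfrak{t}}$ also contains the constants, but constants trivially satisfy both bounds. Finally, assembling the $f_0$ and $f'$ contributions and invoking the cutoff estimate \eqref{eq:cutoff} once more to handle the interpolation in the gluing annulus completes the proof; the $\nabla\l(f)$ bound follows by the same case analysis, differentiating the (at most quadratic) potentials to get at most linearly-growing, hence $C^{1,\alpha}_0$-bounded, gradients.
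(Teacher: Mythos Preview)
Your decomposition $\overline{\mathfrak{h}}=\overline{\mathfrak{t}}\oplus\mathfrak{h}'$ and your treatment of $f'\in\mathfrak{h}'$ match the paper exactly. The problem is in the $\overline{\mathfrak{t}}$ case, where your region bookkeeping is off. You assert that $\omega_\epsilon=\epsilon^2\eta$ only on $r\leqslant\epsilon$ (``the genuinely new region''), but in fact $\omega_\epsilon=\epsilon^2\eta$ on the entire ball $B_{r_\epsilon}$, while $\omega_\epsilon=\omega$ exactly for $r\geqslant 2r_\epsilon$ --- so your ``transition region $r_\epsilon\leqslant r\leqslant 1$'' perturbation argument via $\tfrac12\nabla\Phi\cdot\nabla f_0$ is largely vacuous. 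This leaves the annuli $\epsilon\leqslant r\leqslant r_\epsilon$, i.e.\ $1\leqslant |w|\leqslant R_\epsilon=\epsilon^{\alpha-1}\to\infty$, unaccounted for. That is precisely the region where the $\eta$-Hamiltonian $f_\eta$ is \emph{not} bounded but grows quadratically in $|w|$, and one must use that growth, not sidestep it by restricting to $|w|\leqslant 1$.

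The fix is the very idea you already state but misplace: since $T$ acts by unitary transformations in the chosen coordinates, $X_{f_0}$ is generated by a matrix $A$ and its $\eta$-Hamiltonian satisfies $|\nabla^i_\eta f_\eta|\leqslant C_i\,r(w)^{2-i}|A|$ on all of $\Bl_0\mathbf{C}^m$ (here $r(w)=\max(1,|w|)$, and this is where the asymptotics of Lemma~\ref{lem:BSmetric} enter). Since $\l(f_0)=\epsilon^2 f_\eta$ on $B_{r_\epsilon}$, rescaling gives $|\nabla^i_{\omega_\epsilon}\l(f_0)|\leqslant C_i\,r^{2-i}|f_0|$ in $z$-coordinates, so in fact $\Vert\l(f_0)\Vert_{C^{k,\alpha}_2}\leqslant C|f_0|$ --- stronger than the weight-$0$ bound you aim for, and immediately yielding both inequalities. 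This is exactly what the paper does; your argument becomes correct once the quadratic estimate is applied on $B_{r_\epsilon}$ rather than just $B_\epsilon$.
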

\begin{proof}
	Recall that we defined the lifting using a decomposition
	$\overline{\mathfrak{h}}=\overline{\mathfrak{t}}
	\oplus\mathfrak{h}'$, where the
	functions in $\mathfrak{h}'$ vanish at $p$. Suppose
	first that $f\in\mathfrak{h}'$. Since $f$ vanishes at $p$, we
	have
	\[ \Vert f\Vert_{C^{1,\alpha}_1(M_p)}\leqslant c|f|,\]
	where $c$ is independent of $f$. It follows from the
	multiplication properties of weighted spaces and
	(\ref{eq:cutoff}) that
	\[ \Vert \l(f)\Vert_{C^{1,\alpha}_1} \leqslant c|f|,\]
	from which the required inequalities follow.

	Now suppose that $f\in\overline{\mathfrak{t}}$, 
	and write $X_f$ for the
	holomorphic vector field on $M$ corresponding to $f$. The
        result is clearly true for constants, so we can assume that
        $f$ vanishes at $p$. On the
	ball $B_{r_\epsilon}\subset M$,
	the action of $X_f$ is given by unitary transformations,
	generated by a matrix $A$, say.
	Outside $B_{r_\epsilon}$ the vector field is unchanged and the 
	metrics $\omega$ and $\omega_\epsilon$ are uniformly equivalent.
	Inside $B_{r_\epsilon}$ the metric $\omega_\epsilon$ is
	uniformly equivalent to $\epsilon^2\eta$. It is more convenient
	to work with $\eta$, since that is a fixed metric, and we can
	then scale back depending on $\epsilon$. Let $f_\eta$ be the
	Hamiltonian function of $X_f$ with respect to $\eta$, so
	$f= \epsilon^2 f_\eta$. In terms of $\eta$ we are working on 
	the ball $B_{R_\epsilon}$, and outside $B_1$ the Hamiltonian
	$f_\eta$ is given by a quadratic function depending on $A$. It
	follows that we have pointwise bounds
	\[|\nabla^i f_\eta(x)|_\eta\leqslant C_ir(x)^{2-i}|A|,\]
	where $r(x)=1$ inside $B_1$, and $r(x)$ is the distance from the
	exceptional divisor outside $B_1$. We can choose the norm $|A|$
	to coincide with the norm $|f|$ chosen on the finite dimensional
	vector space $\overline{\mathfrak{h}}$. Rescaling this
	inequality, together with what we already know outside
	$B_{r_\epsilon}$, we
	get $\Vert \l(f)\Vert_{C^{k,\alpha}_2}\leqslant C|f|$, which implies
	the results that we want. 
\end{proof}

\subsection{The linearized operator on $\Bl_pM$}
We now begin studying the linearized operator on $\Bl_pM$, in
terms of the weighted spaces introduced in the previous section. 
The constants that appear below will be independent of
$\epsilon$ unless the dependence is made explicit. 

Recall that for any metric $\Omega$ we write
\[ L_{\Omega}(\phi) = -\Delta_{\Omega}^2 \phi -
\mathrm{Ric}(\Omega)^{i\bar j} 
\phi_{i\bar j}.\]
We want to first study how this varies as we change the
metric.
For this we have the following, which is
 Proposition 20 from~\cite{GSz10}.
\begin{prop}\label{prop:linear}
	Suppose that $\delta < 0$. 
	There exist constants $c_0, C>0$ such that if 
	$\Vert\phi\Vert_{C^{4,\alpha}_2} < c_0$
	then  	
	\[ \Vert L_{\omega_\phi}(f) -
	L_{\omega_\epsilon}(f)\Vert_{C^{0,\alpha}_{\delta-4}} \leqslant
	C\Vert\phi\Vert_{C^{4,\alpha}_2}\Vert
	f\Vert_{C^{4,\alpha}_\delta},\]
	where $\omega_\phi=\omega_\epsilon + \ddbar\phi$.
\end{prop}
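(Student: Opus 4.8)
\textbf{Proof proposal for Proposition~\ref{prop:linear}.}

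The plan is to treat $L_\Omega(\phi)$ as a smooth algebraic-differential expression in the metric coefficients, their inverses, and their derivatives up to fourth order, and to control its variation along the line $\Omega_t = \omega_\epsilon + t\,\ddbar\phi$, $t\in[0,1]$. First I would write
\[ L_{\omega_\phi}(f) - L_{\omega_\epsilon}(f) = \int_0^1 \frac{d}{dt}\, L_{\Omega_t}(f)\, dt, \]
and observe that $\frac{d}{dt} L_{\Omega_t}(f)$ is a sum of terms, each of which is a contraction of $\ddbar\phi$ (i.e. the coefficients $\phi_{i\bar j}$ and their covariant derivatives up to order two, coming from the variation of $\mathrm{Ric}$ and of $\Delta^2$) against up to four derivatives of $f$, with coefficients built from $g_{\Omega_t}^{-1}$ and the curvature of $\Omega_t$. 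The key analytic input is that, under the smallness hypothesis $\Vert\phi\Vert_{C^{4,\alpha}_2} < c_0$, the metric $\Omega_t$ is uniformly equivalent to $\omega_\epsilon$ on all the relevant scales — on $M\setminus B_1$, on each dyadic annulus $B_{2r}\setminus B_r$ in the rescaled metric $r^{-2}\omega_\epsilon$, and on $B_\epsilon$ in the metric $\eta$ — with curvature bounds independent of $\epsilon$. This is where one uses that the weight $2$ is the correct one: $\ddbar\phi$ then has $C^{2,\alpha}$-norm bounded by $c_0$ in each rescaled metric, so the rescaled metrics stay in a fixed $C^{2,\alpha}$-neighborhood of the Euclidean/Burns-Simanca models.

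Next I would run the estimate scale by scale using the multiplicative structure of the weighted norms. On $M\setminus B_1$ the metrics are all comparable to the fixed metric $\omega$, and the claimed bound is the standard estimate that the map $(\text{metric},f)\mapsto L(f)$ is smooth, so its variation is controlled linearly by $\Vert\phi\Vert_{C^{4,\alpha}}\Vert f\Vert_{C^{4,\alpha}}$. On the annulus $B_{2r}\setminus B_r$, rescaling by $r^{-2}$ turns $L$ into $r^4$ times the Lichnerowicz-type operator for the rescaled metric $r^{-2}\omega_\epsilon$; the factor $r^4$ matches the shift from weight $\delta$ to weight $\delta-4$, and $\ddbar\phi$ rescaled has $C^{2,\alpha}$-norm $\leqslant \Vert\phi\Vert_{C^{4,\alpha}_2}$ by definition of the weight-$2$ norm, so the variation of the operator picks up exactly a factor $\Vert\phi\Vert_{C^{4,\alpha}_2}$ times $r^{\delta-4}\Vert f\Vert_{C^{4,\alpha}_\delta}$. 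The same computation on $B_\epsilon$ with the metric $\eta$ (where $\omega_\epsilon = \epsilon^2\eta$) produces the $\epsilon^{\delta-4}$ factor. Taking the supremum over $r$ and over these three regions, and summing the $t$-integral, gives the stated inequality; the condition $\delta < 0$ guarantees the weighted spaces behave correctly near the exceptional divisor and that no decay is lost in the contractions.

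The main obstacle is bookkeeping the curvature terms in $\frac{d}{dt}L_{\Omega_t}$ uniformly in $\epsilon$: the Ricci curvature of $\omega_\epsilon$ is large (of order $r^{-2}$) near the exceptional divisor and in the gluing annulus, so one must check that every term in the variation where a factor of $\mathrm{Ric}(\Omega_t)$ appears is paired with enough derivatives of $\phi$ and $f$ that the scaling still closes. In the rescaled pictures this amounts to the statement that $r^{-2}\omega_\epsilon$ has bounded geometry on $B_{2r}\setminus B_r$ uniformly in $r$ and $\epsilon$, which follows from the explicit form of $\omega_\epsilon$ in Section~\ref{sec:omegaepsilon} together with the Burns-Simanca asymptotics in Lemma~\ref{lem:BSmetric}. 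Once that uniform-geometry input is in place, the estimate is a routine consequence of the chain rule and the multiplication properties of weighted Hölder spaces, exactly as in the proof of Proposition 20 in~\cite{GSz10}.
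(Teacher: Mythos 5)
Your argument is correct: the paper itself gives no proof of Proposition~\ref{prop:linear} (it simply cites Proposition 20 of \cite{GSz10}), and the proof you outline --- writing $L_{\omega_\phi}(f)-L_{\omega_\epsilon}(f)$ as the integral of the variation along $\Omega_t=\omega_\epsilon+t\,\ddbar\phi$ and estimating dyadic annulus by annulus, using that the weight-$2$ bound on $\phi$ controls the rescaled metric perturbation in $C^{2,\alpha}$ uniformly in $r$ and $\epsilon$, with the factor $r^4$ (resp.\ $\epsilon^{\delta-4}$ on $B_\epsilon$) accounting for the shift from weight $\delta$ to $\delta-4$ --- is exactly the standard scaling argument behind the cited result, and is in the same spirit as the schematic expansion the paper uses for Lemma~\ref{lem:Q2}. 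The scaling bookkeeping you give closes correctly (each term of the variation is of the form $\nabla^a(\ddbar\phi)*\nabla^b f$ with $a+b\leqslant 4$ in the scaling count), so no gap.
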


One consequence is an estimate for the nonlinear operator
$Q_{\omega_\epsilon}$ in the formula
\begin{equation}\label{eq:Q}
	\mathbf{s}(\omega_\epsilon + \ddbar\phi) =
\mathbf{s}(\omega_\epsilon) + L_{\omega_\epsilon}(\phi) +
Q_{\omega_\epsilon}(\phi).
\end{equation}
The following is Lemma 21 in~\cite{GSz10}. 

\begin{lem}\label{lem:Q}
Suppose that $\delta < 0$. There exists a $c_0 >
0$ such that if 
\[ \Vert \phi\Vert_{C^{4,\alpha}_2}, \Vert \psi
\Vert_{C^{4,\alpha}_2} \leqslant c_0, \]
then 
\[ \Vert Q_{\omega_\epsilon}(\phi) -
Q_{\omega_\epsilon}(\psi)\Vert_{C^{0,\alpha}_{\delta-4}} \leqslant
C(\Vert\phi\Vert_{C^{4,\alpha}_2} + \Vert
\psi\Vert_{C^{4,\alpha}_2})
\Vert \phi-\psi\Vert_{C^{4,\alpha}_\delta}. \]
\end{lem}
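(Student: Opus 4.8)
The estimate for $Q_{\omega_\epsilon}$ is a standard consequence of the mean-value-theorem together with Proposition~\ref{prop:linear}, so the proof will be short. The idea is that $Q_{\omega_\epsilon}(\phi)$ is, by definition~\eqref{eq:Q}, the nonlinear remainder in the expansion of the scalar curvature, so its derivative in $\phi$ at a point $\psi$ is exactly the difference of linearized operators $L_{\omega_\psi} - L_{\omega_\epsilon}$ (up to the first-order term $\frac12\nabla\mathbf{s}\cdot\nabla$ which cancels since we are subtracting two such expressions; in any case one should be a little careful here and use the identity $D_\psi Q_{\omega_\epsilon}(\cdot) = L_{\omega_\psi}(\cdot) - L_{\omega_\epsilon}(\cdot)$ that follows from differentiating $\mathbf s(\omega_\epsilon + \ddbar\psi + \ddbar\cdot) = \mathbf s(\omega_\psi) + L_{\omega_\psi}(\cdot) + Q_{\omega_\psi}(\cdot)$ at zero and comparing with~\eqref{eq:Q}). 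Then integrating along the segment from $\psi$ to $\phi$ gives
\[ Q_{\omega_\epsilon}(\phi) - Q_{\omega_\epsilon}(\psi) = \int_0^1 \big(L_{\omega_{\psi + t(\phi-\psi)}} - L_{\omega_\epsilon}\big)(\phi - \psi)\,dt, \]
and Proposition~\ref{prop:linear} bounds the integrand.

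First I would fix $\delta < 0$ and choose $c_0$ small enough that Proposition~\ref{prop:linear} applies with the metric $\omega_{\psi + t(\phi-\psi)} = \omega_\epsilon + \ddbar(\psi + t(\phi-\psi))$ for every $t\in[0,1]$; this needs $\Vert \psi + t(\phi-\psi)\Vert_{C^{4,\alpha}_2} < c_0'$ where $c_0'$ is the constant from that proposition, which holds by convexity once $\Vert\phi\Vert_{C^{4,\alpha}_2}, \Vert\psi\Vert_{C^{4,\alpha}_2} \leqslant c_0 \leqslant c_0'$. Second, I would apply Proposition~\ref{prop:linear} to each $t$ with $f = \phi - \psi$, obtaining
\[ \Vert (L_{\omega_{\psi + t(\phi-\psi)}} - L_{\omega_\epsilon})(\phi-\psi)\Vert_{C^{0,\alpha}_{\delta-4}} \leqslant C\Vert \psi + t(\phi - \psi)\Vert_{C^{4,\alpha}_2}\,\Vert\phi-\psi\Vert_{C^{4,\alpha}_\delta}. \]
Third, I would estimate $\Vert \psi + t(\phi-\psi)\Vert_{C^{4,\alpha}_2} \leqslant \Vert\phi\Vert_{C^{4,\alpha}_2} + \Vert\psi\Vert_{C^{4,\alpha}_2}$ uniformly in $t$, integrate over $t\in[0,1]$, and pull the supremum out of the integral, which yields exactly the claimed bound (with a new constant $C$).

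**The main obstacle.** The only genuinely delicate point is justifying the identity $D_\psi Q_{\omega_\epsilon} = L_{\omega_\psi} - L_{\omega_\epsilon}$ — i.e. that differentiating the nonlinear scalar-curvature remainder reproduces the change in the linearized operator — and, relatedly, making sure the first-order terms $\frac12\nabla\mathbf s\cdot\nabla$ appearing implicitly in passing between $\mathbf s(\Omega+\ddbar\cdot)$ and $L_\Omega$ do not spoil the cancellation. This is a formal computation with the Taylor expansion of $\Omega\mapsto\mathbf s(\Omega)$ and is presumably done the same way as in~\cite{GSz10}; once it is in hand the rest is a one-line application of Proposition~\ref{prop:linear} plus the fundamental theorem of calculus. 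A secondary bookkeeping point is that Proposition~\ref{prop:linear} is stated with the hypothesis $\Vert\phi\Vert_{C^{4,\alpha}_2} < c_0$ on a single metric perturbation, so one must verify that the line segment $\psi + t(\phi - \psi)$ stays inside that ball — but this is immediate from the triangle inequality and is the reason the hypotheses of the lemma ask for both $\Vert\phi\Vert_{C^{4,\alpha}_2}$ and $\Vert\psi\Vert_{C^{4,\alpha}_2}$ to be small.
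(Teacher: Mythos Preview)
The paper does not actually prove this lemma; it simply cites it as Lemma~21 in~\cite{GSz10}. Your argument via the mean value theorem and Proposition~\ref{prop:linear} is exactly the standard one and is correct.

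One small point of confusion worth clearing up: there is no $\tfrac12\nabla\mathbf{s}\cdot\nabla$ term to worry about. By definition~\eqref{eq:scal}, $L_\Omega$ is the full linearization of $\phi\mapsto\mathbf{s}(\Omega+\ddbar\phi)$ at $\phi=0$, so differentiating
\[
Q_{\omega_\epsilon}(\phi)=\mathbf{s}(\omega_\epsilon+\ddbar\phi)-\mathbf{s}(\omega_\epsilon)-L_{\omega_\epsilon}(\phi)
\]
at $\phi=\psi$ in the direction $h$ gives $L_{\omega_\psi}(h)-L_{\omega_\epsilon}(h)$ on the nose, with no extra first-order piece. The term $\tfrac12\nabla\mathbf{s}\cdot\nabla$ only appears when one rewrites $L_\Omega$ in terms of the Lichnerowicz operator $\mathcal{D}^*_\Omega\mathcal{D}_\Omega$ via~\eqref{eq:Lichn}, which is not needed here. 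With that clarification your identity
\[
Q_{\omega_\epsilon}(\phi)-Q_{\omega_\epsilon}(\psi)=\int_0^1\big(L_{\omega_{\psi+t(\phi-\psi)}}-L_{\omega_\epsilon}\big)(\phi-\psi)\,dt
\]
is immediate, and the rest of your argument goes through verbatim.
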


We will need one further result, which was not used in~\cite{GSz10}.
\begin{lem}\label{lem:Q2} Suppose that $\omega = \omega_\epsilon +
  \ddbar\phi$, and
  \[ \Vert\phi\Vert_{C^{4,\alpha}_2},\Vert\psi\Vert_{C^{4,\alpha}_2}
  \leqslant c_0, \]
  for some sufficiently small $c_0$. Then 
  \begin{equation}\label{eq:Qdiff}
    \Vert Q_{\omega}(\psi) -
Q_{\omega_\epsilon}(\psi)\Vert_{C^{0,\alpha}_{\delta-4}} 
\leqslant C\Vert
\phi\Vert_{C^{4,\alpha}_2} \Vert\psi\Vert_{C^{4,\alpha}_2}
\Vert\psi\Vert_{C^{4,\alpha}_{\delta}}.
\end{equation}
\end{lem}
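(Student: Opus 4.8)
The plan is to reduce the estimate to Proposition~\ref{prop:linear} by writing the difference $Q_\omega(\psi) - Q_{\omega_\epsilon}(\psi)$ in terms of the linearized operators at the two base metrics. Starting from the defining relation \eqref{eq:Q} applied to both $\omega$ and $\omega_\epsilon$, and using that $\omega + \ddbar\psi = \omega_\epsilon + \ddbar(\phi + \psi)$, I would expand $\mathbf{s}(\omega_\epsilon + \ddbar(\phi+\psi))$ in two ways. On one hand it equals $\mathbf{s}(\omega) + L_\omega(\psi) + Q_\omega(\psi)$; on the other it equals $\mathbf{s}(\omega_\epsilon) + L_{\omega_\epsilon}(\phi+\psi) + Q_{\omega_\epsilon}(\phi+\psi)$. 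Since also $\mathbf{s}(\omega) = \mathbf{s}(\omega_\epsilon) + L_{\omega_\epsilon}(\phi) + Q_{\omega_\epsilon}(\phi)$, subtracting gives the identity
\[
  Q_\omega(\psi) - Q_{\omega_\epsilon}(\psi) = \big(L_{\omega_\epsilon}(\psi) - L_\omega(\psi)\big) + \big(Q_{\omega_\epsilon}(\phi+\psi) - Q_{\omega_\epsilon}(\phi) - Q_{\omega_\epsilon}(\psi)\big).
\]

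The first bracket on the right is controlled directly by Proposition~\ref{prop:linear} (with the roles of $\omega$ and $\omega_\epsilon$ as there), giving a bound $C\Vert\phi\Vert_{C^{4,\alpha}_2}\Vert\psi\Vert_{C^{4,\alpha}_\delta}$; however this is not yet of the required cubic form, so I will need to sharpen it. The key observation is that $Q$ is the genuinely quadratic-and-higher part of $\mathbf{s}(\cdot)$, so $Q_{\omega_\epsilon}(\phi + \psi) - Q_{\omega_\epsilon}(\phi) - Q_{\omega_\epsilon}(\psi)$ has no term that is purely a function of $\phi$ alone or of $\psi$ alone: every term is at least linear in each of $\phi$ and $\psi$, hence is bounded by $C\Vert\phi\Vert_{C^{4,\alpha}_2}\big(\Vert\psi\Vert_{C^{4,\alpha}_2}\Vert\psi\Vert_{C^{4,\alpha}_\delta}\big)$ using the multiplication properties of the weighted spaces together with the smallness of $\phi,\psi$ in $C^{4,\alpha}_2$. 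This is exactly \eqref{eq:Qdiff}. To get the same cubic structure out of the first bracket, I would not use Proposition~\ref{prop:linear} as a black box but rather revisit its proof: $L_\omega - L_{\omega_\epsilon}$ is itself (schematically) linear in $\phi$ with coefficients depending on $\omega_\epsilon$ and $\phi$, so $\big(L_{\omega_\epsilon} - L_\omega\big)(\psi)$ is automatically linear in $\phi$; the point is to extract an additional factor of $\Vert\psi\Vert_{C^{4,\alpha}_2}$ from the remaining dependence on $\psi$. This works because the term in $L_\omega(\psi) - L_{\omega_\epsilon}(\psi)$ that is linear in $\psi$ and independent of further $\psi$-factors is precisely what would contribute to $\frac{d}{dt}\big|_0 L_{\omega_\epsilon + t\phi}(\psi)$, and one checks this quantity is in turn controlled in the right norm; alternatively one absorbs it by noting that the genuinely problematic terms only appear at quadratic-or-higher order in $\psi$ once one subtracts off the correct linearization.

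The main obstacle, then, is precisely this bookkeeping: showing that no term in $Q_\omega(\psi) - Q_{\omega_\epsilon}(\psi)$ is merely quadratic in $\psi$ with a bounded (rather than $O(\Vert\phi\Vert)$) coefficient, and none is linear in $\psi$ times a bounded coefficient. Concretely I expect to carry out a Taylor expansion of the scalar curvature functional $\Phi \mapsto \mathbf{s}(\omega_\epsilon + \ddbar\Phi)$ around $\Phi = 0$, tracking the trilinear term $D^3\mathbf{s}(\phi,\psi,\psi)$ and the remainder, and invoking Lemma~\ref{lem:Q} to handle the quadratic remainders in the combined variable. The weighted-space multiplication estimates and \eqref{eq:cutoff} then convert the pointwise structure into the stated norm bound. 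No step beyond this requires new ideas — it is the same style of estimate as Proposition~\ref{prop:linear} and Lemma~\ref{lem:Q}, just one order further out in the Taylor expansion.
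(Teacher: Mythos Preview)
Your identity
\[
  Q_\omega(\psi) - Q_{\omega_\epsilon}(\psi) = \big(L_{\omega_\epsilon}(\psi) - L_\omega(\psi)\big) + \big(Q_{\omega_\epsilon}(\phi+\psi) - Q_{\omega_\epsilon}(\phi) - Q_{\omega_\epsilon}(\psi)\big)
\]
is correct, but the estimate you claim for the second bracket is false. Writing $Q_{\omega_\epsilon}(u) = B(u,u) + (\text{cubic and higher})$ with $B$ the symmetric bilinear form coming from the second variation of the scalar curvature, one has
\[
  Q_{\omega_\epsilon}(\phi+\psi) - Q_{\omega_\epsilon}(\phi) - Q_{\omega_\epsilon}(\psi) = 2B(\phi,\psi) + (\text{terms of total degree }\geqslant 3).
\]
The piece $2B(\phi,\psi)$ is \emph{linear} in $\psi$, so it can only be bounded by $C\Vert\phi\Vert_{C^{4,\alpha}_2}\Vert\psi\Vert_{C^{4,\alpha}_\delta}$, not by the cubic expression $C\Vert\phi\Vert_{C^{4,\alpha}_2}\Vert\psi\Vert_{C^{4,\alpha}_2}\Vert\psi\Vert_{C^{4,\alpha}_\delta}$ you asserted. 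Likewise the first bracket $L_{\omega_\epsilon}(\psi) - L_\omega(\psi)$ is \emph{entirely} linear in $\psi$ (it is a linear operator applied to $\psi$), so your talk of extracting from it ``terms quadratic-or-higher in $\psi$'' does not make sense as written. What actually happens is that these two linear-in-$\psi$ pieces cancel exactly: since $L_\omega(\psi) = L_{\omega_\epsilon}(\psi) + DQ_{\omega_\epsilon}|_\phi(\psi)$, the first bracket equals $-DQ_{\omega_\epsilon}|_\phi(\psi)$, which is precisely minus the linear-in-$\psi$ part of the second bracket. Once you make this cancellation explicit, what remains is genuinely quadratic-and-higher in $\psi$ with at least one factor of $\phi$, and then your Taylor-remainder bookkeeping can be carried through. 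So the route is salvageable, but the argument as you wrote it contains a wrong claim and hides the crucial cancellation.

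By contrast, the paper avoids this decomposition entirely. It writes the scalar curvature in local coordinates schematically as $\mathbf{s}(g+h) = g^{-1}(I+g^{-1}h)^{-1}\partial^2\big(\log\det g + \log\det(I+g^{-1}h)\big)$, expands to obtain $Q_g(h) = \sum_{i=0}^2 g^{-1}\big[\partial^i(g^{-1}h)^2\big]F_i(g^{-1}h)$ with $F_i$ power series, and then compares $Q_{g_1}(h) - Q_{g_2}(h)$ directly term by term. Each term visibly contains at least two factors of $h=\ddbar\psi$ (giving $\Vert\psi\Vert_{C^{4,\alpha}_2}\Vert\psi\Vert_{C^{4,\alpha}_\delta}$) and the difference $g_1^{-1}-g_2^{-1}$ contributes $\Vert\phi\Vert_{C^{4,\alpha}_2}$. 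This is more elementary: no cancellation is needed because the explicit formula for $Q_g$ already has the factor $(g^{-1}h)^2$ built in.
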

\begin{proof}
Let us write $g$ for a metric, and $g+h$ for a small perturbation,
thought of as matrices in local coordinates. We
can write schematically
\[ \begin{aligned} \mathbf{s}(g + h) &=
  (g+h)^{-1}\partial^2\log\det(g+h) \\
  &= g^{-1}(I + g^{-1}h)^{-1}\partial^2(\log\det g + \log\det (I +
  g^{-1}h)),
\end{aligned} \]
where $I$ is the identity matrix. Expanding in power series, we find
that $Q$ is of the form
\[ Q_g(h) = \sum_{i=0}^2 g^{-1}\big[\partial^i(g^{-1}h)^2\big] F_i(g^{-1}h), \]
where the $F_i$ are power series. In order to estimate $Q_{g_1}(h) -
Q_{g_2}(h)$ it is enough to consider a typical term, for instance
\[ g_1^{-1}\big[ \partial^2(g_1^{-1}h)^2\big] (g_1^{-1}h)^l - 
g_2^{-1}\big[ \partial^2(g_2^{-1}h)^2\big] (g_2^{-1}h)^l, \]
for some $l\geqslant 0$. In our situation $h=\ddbar\psi$, and we have
\[ \begin{aligned}
  \Vert g_1^{-1} - g_2^{-1}\Vert_{C^{2,\alpha}_0} &\leqslant
  C\Vert\phi\Vert_{C^{4,\alpha}_2} \leqslant Cc_0, \\
  \Vert g_j^{-1}h\Vert_{C^{2,\alpha}_0} &\leqslant
  C\Vert\psi\Vert_{C^{4,\alpha}_2} \leqslant Cc_0, \\
  \Vert g_j^{-1}h\Vert_{C^{2,\alpha}_{\delta-2}} &\leqslant
  C\Vert\psi\Vert_{C^{4,\alpha}_\delta},
  \end{aligned}\]
for $j=1,2$. From this it is a straightforward calculation to check
the estimate \eqref{eq:Qdiff}. 
\end{proof}

The heart of the matter is to understand the invertibility
of the linearized operator of our problem on $\Bl_pM$. The following is
Proposition 22 from~\cite{GSz10}. 

\begin{prop}\label{prop:inverse1}
	For sufficiently small $\epsilon$ and $\delta\in(4-2m,0)$
	the operator 
	\[ \begin{gathered}
		G_1 : (C^{4,\alpha}_\delta)^T \times \overline{\mathfrak{h}} \to
		(C^{0,\alpha}_{\delta-4})^T \\ 
		(\phi, f) \mapsto L_{\omega_\epsilon}(\phi) - 
		\frac{1}{2}\nabla\mathbf{s}(\omega_\epsilon)\cdot
		\nabla\phi - \l(f)
	\end{gathered}\]
	has a right inverse $P_1$, with the operator norm $\Vert P_1\Vert <
	C$ for some constant $C$ independent of $\epsilon$.
\end{prop}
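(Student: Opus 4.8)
The plan is to build the right-inverse $P_1$ by gluing together the right-inverses coming from the two model problems, Proposition~\ref{prop:Mp} on $M_p$ and Proposition~\ref{prop:Bl0} on $\Bl_0\mathbf{C}^m$, using the cutoff functions $\gamma_1,\gamma_2$ from Section~\ref{sec:omegaepsilon}, and then to correct the resulting approximate inverse by a Neumann series. The first step is to observe that, by \eqref{eq:Lichn}, the operator $G_1$ differs from $(\phi,f)\mapsto -\mathcal{D}^*_{\omega_\epsilon}\mathcal{D}_{\omega_\epsilon}\phi - \l(f)$ by a term $\tfrac12\nabla(\mathbf{s}(\omega_\epsilon)-\l(\mathbf{s}))\cdot\nabla\phi$, which is small as $\epsilon\to 0$ since $\mathbf{s}(\omega_\epsilon)$ is close to $\l(\mathbf{s})$; so it suffices to invert the Lichnerowicz-type operator and absorb this error later. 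On the region $M\setminus B_{r_\epsilon}$ the metric $\omega_\epsilon$ agrees with $\omega$, and on $B_{r_\epsilon}$ it is $\epsilon^2\eta$; the weighted norm $C^{k,\alpha}_\delta$ on $\Bl_pM$ is, by construction, the glued version of the norms on $M_p$ and $\Bl_0\mathbf{C}^m$, so a right-inverse built from the model operators will be bounded on these spaces provided the weight $\delta$ lies in the admissible range of both propositions.

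Concretely, given $\psi\in (C^{0,\alpha}_{\delta-4})^T$, I would split $\psi = \gamma_1\psi + \gamma_2\psi$, feed $\gamma_1\psi$ (viewed on $M_p$) into the right-inverse from Proposition~\ref{prop:Mp} to get $(\phi_1,f)$ with $\mathcal{D}^*_\omega\mathcal{D}_\omega\phi_1 - f = \gamma_1\psi$, and feed a suitably rescaled version of $\gamma_2\psi$ into the right-inverse from Proposition~\ref{prop:Bl0} (with weight $\delta$; note $\delta\in(4-2m,0)$ so $\delta>4-2m$ puts us in the first case of that proposition after rescaling, or we work on the annulus). Patching $\phi = \gamma_1'\phi_1 + \gamma_2'\phi_2$ with cutoffs supported where each model metric is exact, and using Lemma~\ref{lem:lb} to control $\l(f)$, one checks that $G_1(\phi,f) = \psi + E\psi$ where the error operator $E$ collects: (i) the commutator terms $[G_1,\gamma_i']\phi_i$ supported in the gluing annulus $B_{2r_\epsilon}\setminus B_{r_\epsilon}$, which are small by \eqref{eq:cutoff} and the choice $r_\epsilon = \epsilon^\alpha$ together with the weight gap; (ii) the difference between $\mathcal{D}^*_{\omega_\epsilon}\mathcal{D}_{\omega_\epsilon}$ and the model operators where the metrics disagree, controlled by Proposition~\ref{prop:linear}; and (iii) the $\tfrac12\nabla(\mathbf{s}(\omega_\epsilon)-\l(\mathbf{s}))\cdot\nabla\phi$ term. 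Each contribution has operator norm $o(1)$ as $\epsilon\to 0$, so $\Vert E\Vert < \tfrac12$ for small $\epsilon$, and then $P_1 := (\text{approximate inverse})\circ (I+E)^{-1}$ is a genuine right-inverse with norm bounded independently of $\epsilon$.

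The main obstacle I anticipate is making the error estimate in the gluing annulus uniform in $\epsilon$ while respecting the $T$-invariance and the finite-dimensional factor $\overline{\mathfrak{h}}$: one must choose the weight $\delta$ and the annulus scale $r_\epsilon=\epsilon^\alpha$ so that the commutator terms, which involve two derivatives of $\gamma_i$ and hence pick up factors $r_\epsilon^{-2}$, are nonetheless absorbed by the decay $r^\delta$ of the model solutions there — this is exactly where $\delta\in(4-2m,0)$ and $m>2$ enter, giving enough room, and where the precise asymptotics of the Burns-Simanca potential from Lemma~\ref{lem:BSmetric} guarantee the model solution on $\Bl_0\mathbf{C}^m$ decays at the required rate. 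A secondary subtlety is that the right-inverse from Proposition~\ref{prop:Mp} produces an $f\in\overline{\mathfrak{t}}$ while the target allows $f\in\overline{\mathfrak{h}}$; this is harmless since $\overline{\mathfrak{t}}\subset\overline{\mathfrak{h}}$, but one should check the construction still covers the full $\overline{\mathfrak{h}}$-direction by noting that the components in $\mathfrak{h}'$ vanish at $p$ and so $\l$ of them is handled directly on $M_p$ with the bound from Lemma~\ref{lem:lb}. Since this proposition is quoted verbatim as Proposition~22 of~\cite{GSz10}, I would in fact just cite that reference and only indicate that the weighted spaces here are the same, but the gluing-plus-Neumann-series argument above is the substance of the proof.
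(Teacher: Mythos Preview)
Your proposal is correct and matches the paper's own treatment: the paper does not prove this proposition at all but simply states that it is Proposition~22 from~\cite{GSz10}, exactly as you conclude in your final sentence. The gluing-plus-Neumann-series sketch you provide is indeed the substance of the argument in~\cite{GSz10}, so your outline is accurate as well; your observation about the $\overline{\mathfrak{t}}$ versus $\overline{\mathfrak{h}}$ discrepancy between Proposition~\ref{prop:Mp} and the present statement is a genuine subtlety (arising because here $T$ is not assumed maximal, cf.\ the remark after Definition~\ref{def:lifting}), and your suggested resolution via Lemma~\ref{lem:lb} is the right one.
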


We will need a slight variation of this result as well, dealing with
weights in the range $(3-2m,4-2m)$. One can easily obtain a result for
$\delta\in (3-2m,4-2m)$ from the preivous proposition, but we will
only have a bound of the form $C\epsilon^{\delta-(4-2m)}$ for the
inverse. It turns out that if we restrict the range to functions with
zero mean, we can obtain an inverse with norm bounded independent of
$\epsilon$. 

\begin{prop}\label{prop:inverse2}
	Let us write $(C^{0,\alpha}_{\delta-4})^T_0$ for the elements in
	$(C^{0,\alpha}_{\delta-4})^T$ which have zero mean on $\Bl_pM$,
        and $\overline{\mathfrak{h}}_0$ for the elements
        $f\in\overline{\mathfrak{h}}$ such that $\l(f)$ has zero mean
        on $\Bl_pM$
	with respect to $\omega_\epsilon$. 
	For sufficiently small $\epsilon$, and $\delta\in(3-2m,4-2m)$,
	the operator 
	\[ \begin{gathered}
		G_2 : (C^{4,\alpha}_\delta)^T \times \overline{\mathfrak{h}}_0 \to
		(C^{0,\alpha}_{\delta-4})^T_0 \\ 
		(\phi, f) \mapsto L_{\omega_\epsilon}(\phi) -
		\frac{1}{2}\nabla\mathbf{s}(\omega_\epsilon)\cdot
		\nabla\phi
		- \l(f)
	\end{gathered}\]
	has a right inverse $P_2$, with the operator norm $\Vert P_2\Vert <
	C$ for some constant $C$ independent of $\epsilon$.
\end{prop}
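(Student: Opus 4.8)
The plan is to derive Proposition~\ref{prop:inverse2} from Proposition~\ref{prop:inverse1} by a careful bookkeeping of the constant that appears when one restricts to the weight range $\delta\in(3-2m,4-2m)$. Recall that the only place where the $\epsilon$-dependent constant $C\epsilon^{\delta-(4-2m)}$ entered when naively lowering the weight was in handling the component of the right-hand side supported near the exceptional divisor — more precisely, in the model operator $\mathcal{D}^*_\eta\mathcal{D}_\eta$ on $\Bl_0\mathbf{C}^m$, where Proposition~\ref{prop:Bl0} shows that for $\delta\in(3-2m,4-2m)$ one needs an extra parameter $t\chi$ (with $\chi$ compactly supported of nonzero integral) in order to have a bounded right inverse. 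The key observation is that the obstruction carried by that $t\chi$ term is exactly a multiple of the integral of the right-hand side. Hence if we restrict attention to data of \emph{zero mean} on $\Bl_pM$, this obstruction vanishes and there is no need to pay the price $\epsilon^{\delta-(4-2m)}$.

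First I would set up the approximate inverse by the same gluing scheme as in the proof of Proposition~\ref{prop:inverse1}: given $\rho\in(C^{0,\alpha}_{\delta-4})^T_0$, split it as $\rho = \gamma_1\rho + \gamma_2\rho$ using the cutoff functions, regard $\gamma_1\rho$ as living on $M_p$ and $\gamma_2\rho$ as living on $\Bl_0\mathbf{C}^m$. Apply the right inverse from Proposition~\ref{prop:Mp} (with weight $\delta$, noting $\delta<0$ is not an integer for generic such $\delta$ — we may assume this, or handle the integer case separately) to $\gamma_1\rho$ to get $(\phi_1,f_1)$, and the second right inverse from Proposition~\ref{prop:Bl0} to $\gamma_2\rho$ to get $(\phi_2,t)$. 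Here is the crucial point: because $\mathcal{D}^*_\eta\mathcal{D}_\eta$ applied to functions in $C^{4,\alpha}_\delta(\Bl_0\mathbf{C}^m)$ with $\delta>4-2m$ lands in the subspace of $C^{0,\alpha}_{\delta-4}$ that is \emph{complementary} to $\mathbf{R}\chi$ up to the image, one checks that the coefficient $t$ is controlled by $\int \gamma_2\rho$, which in turn equals $-\int\gamma_1\rho$ up to terms of size $O(\epsilon^{\text{something positive}})$ times $\Vert\rho\Vert$ since $\int\rho = 0$. Thus $|t|\leqslant C\epsilon^{a}\Vert\rho\Vert_{C^{0,\alpha}_{\delta-4}}$ for some $a>0$, and the extra term $t\chi$, rescaled back to $\Bl_pM$, contributes only a small error rather than forcing an $\epsilon$-negative constant.

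Next I would patch: set $\phi = \gamma_1\phi_1 + \gamma_2\phi_2$ (with appropriate $\epsilon$-rescaling on the $\Bl_0\mathbf{C}^m$ piece, matching the definition of the weighted norm on $\Bl_pM$), and $f = f_1$. Applying $G_2$ to $(\phi,f)$ returns $\rho$ plus commutator terms coming from the derivatives of $\gamma_i$ hitting $\phi_i$, plus the difference between the true operator $L_{\omega_\epsilon} - \tfrac12\nabla\mathbf{s}(\omega_\epsilon)\cdot\nabla$ and the model operators $\mathcal{D}^*_\omega\mathcal{D}_\omega$ resp. $\mathcal{D}^*_\eta\mathcal{D}_\eta$ on the overlap region $B_{2r_\epsilon}\setminus B_{r_\epsilon}$, plus the term $-t\chi$ rescaled. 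Using \eqref{eq:cutoff}, Lemma~\ref{lem:lb}, the relation \eqref{eq:Lichn}, and the comparison estimates between weighted norms, all of these error terms are bounded by $C\epsilon^{b}\Vert\rho\Vert_{C^{0,\alpha}_{\delta-4}}$ for some $b>0$ — this is where the choice $r_\epsilon = \epsilon^\alpha$ and the weight constraints $3-2m<\delta<4-2m$ are used, exactly as in \cite{GSz10}. One must also verify that $G_2(\phi,f)$ really lands back in the zero-mean subspace $(C^{0,\alpha}_{\delta-4})^T_0$, or else project; since $\rho$ has zero mean and the error is small, a Neumann series / iteration on the zero-mean subspace converges, producing the genuine right inverse $P_2$ with $\Vert P_2\Vert$ bounded independently of $\epsilon$. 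Finally I should double-check that $\overline{\mathfrak{h}}_0$ is the correct domain: since $\l(\mathbf{s})$ need not have zero mean, and constants lie in $\overline{\mathfrak{h}}$, the condition cutting $\overline{\mathfrak{h}}_0$ out is one linear equation, so $G_2$ maps between spaces of matching ``codimension one'' structure and surjectivity onto the zero-mean target is the right statement.

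The main obstacle I expect is the bookkeeping that shows the parameter $t$ from the $\Bl_0\mathbf{C}^m$ model problem is genuinely of size $O(\epsilon^a)\Vert\rho\Vert$ rather than $O(1)\Vert\rho\Vert$: this rests on the fact that $\int_{\Bl_pM}\rho = 0$ and on carefully relating $\int\gamma_2\rho$ (an integral against the volume form of $\eta$, after rescaling) to $\int\rho$ on $\Bl_pM$, accounting for the $\epsilon^{2m}$ scaling of the volume form near the exceptional divisor against the $\epsilon^{\delta-4}$ growth allowed for $\rho$ there; the inequality $\delta < 4-2m$, equivalently $\delta - 4 + 2m < 0$, is precisely what makes this work, and getting the exponent strictly positive is the delicate step. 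Everything else is a routine adaptation of the gluing argument already carried out for Proposition~\ref{prop:inverse1}.
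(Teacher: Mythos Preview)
Your approach is in the right spirit and would probably work, but it takes a genuinely more convoluted route than the paper's argument, and the step you yourself flag as ``the main obstacle'' is one the paper sidesteps entirely.

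The paper does not try to control the size of $t$ during the gluing construction. Instead it enlarges the domain once and for all: it defines
\[
G_0 : (C^{4,\alpha}_\delta)^T \times \overline{\mathfrak{h}}_0 \times \mathbf{R} \to (C^{0,\alpha}_{\delta-4})^T,\qquad (\phi,f,t)\mapsto \mathcal{D}^*_{\omega_\epsilon}\mathcal{D}_{\omega_\epsilon}\phi - \l(f) + t\chi,
\]
and builds a uniformly bounded right inverse for $G_0$ by gluing the model inverses from Propositions~\ref{prop:Mp} and~\ref{prop:Bl0} (this is the same gluing you describe, but with $t$ carried as a genuine free parameter rather than an error to be absorbed). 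Only \emph{after} solving $G_0(\phi,f,t)=\psi$ does one use the zero-mean hypothesis: integrating the equation over the \emph{compact} manifold $\Bl_pM$, the term $\mathcal{D}^*_{\omega_\epsilon}\mathcal{D}_{\omega_\epsilon}\phi$ integrates to zero by self-adjointness, $\l(f)$ integrates to zero since $f\in\overline{\mathfrak{h}}_0$, and $\psi$ integrates to zero by assumption, forcing $t\int\chi=0$ and hence $t=0$ exactly.

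The advantage of the paper's route is that the integration-by-parts step is done on the compact manifold, where it is clean; you instead try to relate $t$ to $\int\gamma_2\rho$ on the noncompact model $\Bl_0\mathbf{C}^m$, which requires justifying that $\int\mathcal{D}^*_\eta\mathcal{D}_\eta\phi_2\,\eta^m$ vanishes (or is small) despite boundary terms at infinity, and then tracking rescaling factors to convert this into a statement about $\int_{\Bl_pM}\rho$. This is the delicate bookkeeping you anticipate, and it is simply unnecessary: by postponing the integral argument until after the inverse for $G_0$ is in hand, the paper gets $t=0$ on the nose with no $\epsilon$-exponents to chase.
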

\begin{proof}
  The proof is very similar to the proof of Proposition 22 in
  \cite{GSz10}, for the $m=2$ case. The idea is to first work with the
  operator
  \[ \begin{aligned}
    G_0 : (C^{4,\alpha}_\delta)^T \times \overline{\mathfrak{h}}_0
    \times\mathbf{R} &\to (C^{0,\alpha}_{\delta -4})^T \\
    (\phi,f,t) &\mapsto
    \mathcal{D}^*_{\omega_\epsilon}\mathcal{D}_{\omega_\epsilon} -
    \l(f) + t\chi,
  \end{aligned}\]
  where $\chi$ is the function from Proposition~\ref{prop:Bl0} and
  \[ \mathcal{D}^*_{\omega_\epsilon}\mathcal{D}_{\omega_\epsilon} = -
  L_{\omega_\epsilon}(\phi) +
		\frac{1}{2}\nabla\mathbf{s}(\omega_\epsilon)\cdot
		\nabla\phi.\]
  One can then use the inverses in Propositions~\ref{prop:Mp} and
  \ref{prop:Bl0} to construct an approximate right inverse for $G_0$,
  which in turn can be used to show that $G_0$ has a bounded right
  inverse. If $\psi\in (C^{0,\alpha}_{\delta -4})^T_0$, then we can
  use this to find $\phi\in
  (C^{4,\alpha}_\delta)^T,f\in\overline{\mathfrak{h}}_0,t\in\mathbf{R}$
  such that
  \[
  \mathcal{D}^*_{\omega_\epsilon}\mathcal{D}_{\omega_\epsilon}(\phi) -
  \l(f) + t\chi = \psi. \]
  Integrating this over $\Bl_pM$ we find that $t=0$. This shows that
  we have constructed an inverse for $G_1$. 
\end{proof}

\begin{rem}\label{rem:G}
 We will need analogous results for operators corresponding
  to a perturbation $\Omega = \omega_\epsilon + \ddbar \Phi$. We have
\[ \Vert (\mathcal{D}^*_{\omega_\epsilon}\mathcal{D}_{\omega_\epsilon}
-
\mathcal{D}^*_\Omega\mathcal{D}_\Omega)\phi\Vert_{C^{0,\alpha}_{\delta-4}}
\leqslant
C\Vert\Phi\Vert_{C^{4,\alpha}_2}\Vert\phi\Vert_{C^{4,\alpha}_\delta},\]
and
\[ \Vert \l(f) - \l_\Omega(f)\Vert_{C^{0,\alpha}_{\delta-4}} \leqslant
C\Vert \nabla \l(f)\cdot\nabla\Phi\Vert_{C^{0,\alpha}_{\delta-4}}
\leqslant C|f| \Vert\Phi\Vert_{C^{4,\alpha}_{\delta-3}} \leqslant
C|f|\Vert\Phi\Vert_{C^{4,\alpha}_2},\]
if $\delta - 3 < 2$. So as long as $\Vert\Phi\Vert_{C^{4,\alpha}_2}$
is sufficiently small, we can deduce the invertibility of the
operators corresponding to $\Omega$ from the invertibility of those
corresponding to $\omega_\epsilon$. Note also that
\begin{equation}\label{eq:intlOmega}
\begin{aligned}
\left|\int_{\Bl_pM} \l_\Omega(f)\,\Omega^m - \int_{\Bl_pM}
  \l(f)\,\omega_\epsilon^m\right| &\leqslant \int_{\Bl_pM}
\frac{1}{2}|\nabla\l(f)\cdot \nabla\Phi|\,\Omega^m \\ &\qquad + \left|
  \int_{\Bl_pM} \l(f)\,(\Omega^m - \omega_\epsilon^m)\right| \\
&\leqslant C |f|\,\Vert\Phi\Vert_{C^{4,\alpha}_2},
\end{aligned}
\end{equation}
so if $f\in\overline{\mathfrak{h}_0}$, then we can adjust $f$ while
preserving its norm up to a factor, to ensure that $\l_\Omega(f)$ has
zero integral with respect to $\Omega$, as long as
$\Vert\Phi\Vert_{C^{4,\alpha}_2}$ is sufficiently small. It follows
that both propositions can be applied to small perturbations of
$\omega_\epsilon$. 
\end{rem}

\subsection{The approximate solution $\Omega_1$}
\label{sec:approx}
We will now work on obtaining a metric $\Omega_1$ on $\Bl_pM$,
which is closer to being extremal than our previous candidate
$\omega_\epsilon$. In the next section we will use this to find an
even better approximate solution $\Omega_2$, at which point we will be
able to use the contraction mapping theorem to obtain a solution of
our equation. 

There are 3 regions in $\Bl_pM$ which we need to think
about differently, namely the region $\Bl_pM\setminus B_{2r_\epsilon}$,
the annular region $B_{2r_\epsilon}\setminus B_{r_\epsilon}$ on which
our cutoff fuctions $\gamma_1$ and $\gamma_2$ live, and
$B_{r_\epsilon}$. Here $r_\epsilon = \epsilon^\alpha$, and from now on
we will work with 
\[ \alpha = \frac{2m-1}{2m+1}. \]

Like in Section~\ref{sec:omegaepsilon}, 
let us write the metric $\omega$ in coordinates near the point $p$. We
will need a more precise expansion than before, so we write
\begin{equation}\label{eq:om1}
	\omega = \ddbar\left(\frac{|z|^2}{2} + A_4(z) + A_5(z) + \phi_6(z)\right),
\end{equation}
where $A_4(z), A_5(z)$ are quartic and quintic in $z$ respectively,
and $\phi_6\in C^{k,\alpha}_6(M_p)$. Also let us write $\mathbf{s} =
\mathbf{s}(\omega)$ for the scalar curvature of $\omega$. 
\begin{lem}\label{lem:A4A5}
Suppose that $\nabla\mathbf{s}$ vanishes at $p$. Then we have
\[ \begin{aligned}
  \Delta_0^2 A_4 &= -\mathbf{s}(p), \\
  \Delta_0^2 A_5 &= 0, 
\end{aligned}\]
where $\Delta_0$ is the Laplacian with respect to the Euclidean
metric. 
\end{lem}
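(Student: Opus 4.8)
The plan is to extract the relevant homogeneous pieces from the expansion of the scalar curvature of $\omega$ near $p$. Recall that for a K\"ahler metric written as $\omega = \ddbar(|z|^2/2 + \phi)$ with $\phi = O(|z|^4)$, the scalar curvature has an expansion whose leading term is governed by the bi-Laplacian of the potential: one has $\mathbf{s}(\omega) = -\Delta_0^2\phi + (\text{higher order in }\phi)$, where all the nonlinear corrections involve at least two factors of the curvature of $\phi$ and hence, since $\phi = A_4 + A_5 + \phi_6$ with $A_4$ quartic and $A_5$ quintic, contribute only terms that are $O(|z|^4)$ in the pointwise sense (schematically $\partial^2\phi \cdot \partial^2\phi$ starts at degree $4+4-4 = 4$). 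Thus near $p$ we have $\mathbf{s}(\omega)(z) = -\Delta_0^2 A_4(z) - \Delta_0^2 A_5(z) + O(|z|^2)$, where $-\Delta_0^2 A_4$ is a constant (degree $0$) and $-\Delta_0^2 A_5$ is linear (degree $1$) in $z$.

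First I would isolate the degree-zero term: evaluating at $z=0$ gives $\mathbf{s}(\omega)(p) = -\Delta_0^2 A_4$, which is exactly $\Delta_0^2 A_4 = -\mathbf{s}(p)$. Next I would isolate the degree-one term. The function $-\Delta_0^2 A_5$ is the homogeneous linear part of $\mathbf{s}(\omega)$ at $p$, i.e. it is determined by $d\mathbf{s}(\omega)|_p$ expressed in the normal coordinates $z$. Here is where the hypothesis $\nabla\mathbf{s}$ vanishes at $p$ enters: since $\nabla\mathbf{s}(\omega)$ is the Hamiltonian/gradient vector field of $\mathbf{s}$ and it vanishes at $p$, in particular $d\mathbf{s}(\omega)|_p = 0$, so the linear part of $\mathbf{s}(\omega)$ at $p$ is zero, giving $\Delta_0^2 A_5 = 0$.

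The one genuinely careful point — the main obstacle — is verifying that the nonlinear corrections in passing from the potential to the scalar curvature really do not contaminate the degree-$0$ and degree-$1$ homogeneous pieces, and that the contributions of $\phi_6 \in C^{k,\alpha}_6(M_p)$ are absorbed into the $O(|z|^2)$ error. For this I would use the Taylor expansion of $\mathbf{s}(\omega_\epsilon + \ddbar\phi)$ around the flat metric as encoded in $L_{\Omega}$ and $Q_{\Omega}$ (see \eqref{eq:scal}): writing $\omega = \omega_{\mathrm{flat}} + \ddbar(A_4 + A_5 + \phi_6)$ with $\omega_{\mathrm{flat}} = \ddbar(|z|^2/2)$, we get $\mathbf{s}(\omega) = L_{\mathrm{flat}}(A_4 + A_5 + \phi_6) + Q_{\mathrm{flat}}(A_4 + A_5 + \phi_6)$, and $L_{\mathrm{flat}}(\cdot) = -\Delta_0^2(\cdot)$ since the flat metric is Ricci-flat. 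The term $L_{\mathrm{flat}}(\phi_6) = -\Delta_0^2\phi_6$ lies in $C^{k-4,\alpha}_2$, hence is $O(|z|^2)$; and $Q_{\mathrm{flat}}$ is quadratic in its argument, with each factor carrying at least two derivatives of something that is $O(|z|^4)$ or smaller, so $Q_{\mathrm{flat}}(A_4+A_5+\phi_6) = O(|z|^2)$ as well. Reading off the degree-$0$ and degree-$1$ homogeneous components then yields precisely $\Delta_0^2 A_4 = -\mathbf{s}(p)$ and $\Delta_0^2 A_5 = 0$, as claimed.
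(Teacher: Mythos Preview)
Your approach is correct and essentially identical to the paper's: expand $\mathbf{s}(\omega)$ around the flat metric as $-\Delta_0^2(A_4+A_5+\phi_6) + Q_0(A_4+A_5+\phi_6)$, show the nonlinear and $\phi_6$ contributions lie in $C^{k,\alpha}_2$, and read off the degree-$0$ and degree-$1$ pieces using $\nabla\mathbf{s}(p)=0$. One small correction to your degree count: the quadratic part of $Q_{\mathrm{flat}}$ is not just $\partial^2\phi\cdot\partial^2\phi$ but schematically $\partial^i\big[(\partial\bar\partial\phi)^2\big]$ for $i\leqslant 2$ (cf.\ the proof of Lemma~\ref{lem:Q2}), so for $\phi=A_4$ the lowest-degree contribution is $O(|z|^2)$, not $O(|z|^4)$; since $O(|z|^2)$ is exactly what you invoke at the end and is all that is needed to isolate the constant and linear parts, this does not affect your argument.
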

\begin{proof}
This follows from computing the scalar curvature of $\omega$ as a
perturbation of the flat metric near $p$. 
\[ \mathbf{s}(\omega) = -\Delta_0^2(A_4 + A_5 + \phi_6) + Q_0(A_4 + A_5
+ \phi_6). \]
From Lemma~\ref{lem:Q} it follows that 
\[ Q_0(A_4 + A_5 + \phi_6) \in C^{k,\alpha}_2(B_1\setminus\{p\}), \]
and so
\[ \mathbf{s}(\omega) + \Delta_0^2(A_4 + A_5) \in C^{k,\alpha}_2, \]
near $p$. 
Since $\Delta_0^2 A_4$ is a constant and $\Delta_0^2 A_5$ is linear,
the result follows. 
\end{proof}

As for the rescaled Burns-Simanca metric, 
after a change of coordinates we can write it as
\begin{equation}\label{eq:eta1}
\begin{aligned}
	\epsilon^2\eta = \ddbar\Big(\frac{|z|^2}{2} &+ d_0\epsilon^{2m-2}
|z|^{4-2m} + d_1\epsilon^{2m}|z|^{2-2m} \\ &+
d_2\epsilon^{4m-4}|z|^{6-4m} +
\epsilon^2\psi_{4-4m}(\epsilon^{-1}z)\Big),
\end{aligned}
\end{equation}
where $\psi_{4-4m}\in C^{k,\alpha}_{4-4m}(\Bl_0\mathbf{C}^m)$, and $d_0
= -\frac{1}{2\pi^{m-1}(m-2)}$. 

These are the two metrics that we want to glue across the annular
region $B_{2r_\epsilon}\setminus B_{r_\epsilon}$. In the construction of
$\omega_\epsilon$ we performed this gluing by multiplying all the terms
except $|z|^2/2$ by cutoff functions. In order to get a better
approximate solution, we want to only multiply $\phi_6$ and
$\psi_{4-4m}$ by cutoff functions. For this we need to modify
$\epsilon^2\eta$ so that it contains $A_4(z), A_5(z)$, and we need to modify
$\omega$ by 
\[d_0\epsilon^{2m-2}|z|^{4-2m} + d_1\epsilon^{2m}|z|^{2-2m}
+ d_2\epsilon^{4m-4}|z|^{6-4m}.\]

Let us focus on $\omega$ first. For this we have the following.
\begin{lem}\label{lem:G}
We can find $T$-invariant functions $G_1, G_2$ on $M\setminus\{p\}$
such that distributionally on $M$ we have
\begin{equation}\label{eq:DDstarG} \begin{aligned}
           \mathcal{D}^*_\omega\mathcal{D}_\omega G_1 &= f_1 +
           \frac{4\pi^m}{(m-3)!} \delta_p \\
           \mathcal{D}^*_\omega\mathcal{D}_\omega G_2 &= f_2 -
           \frac{2\pi^m}{(m-2)!} \Delta\delta_p,
\end{aligned}\end{equation}
for some $f_1,f_2\in\overline{\mathfrak{h}}$, 
and for any $\delta > 0$ we have
\begin{equation}\label{eq:Gexp}
\begin{aligned}
       G_1 - |z|^{4-2m} &\in C^{k,\alpha}_{6-2m-\delta}(M_p), \\
       G_2 - |z|^{2-2m} &\in C^{k,\alpha}_{4-2m-\delta}(M_p).
\end{aligned}\end{equation}
In addition
\[ \begin{aligned}
     f_1 &= -\frac{4\pi^m}{(m-3)!}(V^{-1} + \mu(p)), \\
     f_2 &= \frac{2\pi^m}{(m-2)!} \Delta\mu(p),
\end{aligned} \]
where $V=\mathrm{Vol}(M)$. 
\end{lem}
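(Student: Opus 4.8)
The plan is to construct $G_1$ and $G_2$ by hand from the Euclidean fundamental solutions $|z|^{4-2m}$ and $|z|^{2-2m}$ near $p$, to correct them so that $\mathcal{D}^*_\omega\mathcal{D}_\omega$ maps them into a good weighted space up to a finite-dimensional error, and then to identify that error by a duality argument. Throughout I would write $\mathcal{D}^*_\omega\mathcal{D}_\omega = \Delta_\omega^2 + \mathrm{Ric}(\omega)^{i\bar j}\partial_i\partial_{\bar j} + \tfrac12\nabla\mathbf{s}\cdot\nabla$, using \eqref{eq:Lichn} and the formula for $L_\omega$, and recall that our $\Delta$ is the complex Laplacian, half the Riemannian one. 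In K\"ahler normal coordinates at $p$ the top-order part $\Delta_\omega^2$ agrees with one quarter of the flat bilaplacian, the remaining coefficients vanishing at $p$, and the classical fundamental-solution computation on $\mathbf{R}^{2m}$ gives, distributionally near $p$,
\[ \tfrac14\,\Delta^2_{\mathrm{flat}}\,|z|^{4-2m} = \frac{4\pi^m}{(m-3)!}\,\delta_p, \qquad \tfrac14\,\Delta^2_{\mathrm{flat}}\,|z|^{2-2m} = -\frac{2\pi^m}{(m-2)!}\,\Delta\delta_p. \]
This is the source of the constants in \eqref{eq:DDstarG} and of the normalisation (leading coefficient $=1$) in \eqref{eq:Gexp}.

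Next, fix a cutoff $\chi$ equal to $1$ near $p$ and supported in a small coordinate ball, and set $G_1 = \chi|z|^{4-2m} + \phi_1$. Applying $\mathcal{D}^*_\omega\mathcal{D}_\omega$ to $\chi|z|^{4-2m}$ produces $\tfrac{4\pi^m}{(m-3)!}\delta_p$ together with a $T$-invariant function $g_1$ which is locally integrable and in fact lies in $C^{k-4,\alpha}_{2-2m-\delta}(M_p)^T$: the Ricci term is $O(|z|^{2-2m})$ because $|z|^{4-2m}$ is two orders below the borderline rate, the gradient term is better still since $\nabla\mathbf{s}(p)=0$, and the curvature correction to $\Delta_\omega^2$ is again $O(|z|^{2-2m})$. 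For $G_2$ the analogous computation for $\chi|z|^{2-2m}$ yields $-\tfrac{2\pi^m}{(m-2)!}\Delta\delta_p$, but now the remainder is \emph{not} locally integrable: its leading Ricci contribution is a multiple of $|z|^{-2m-2}\,\mathrm{Ric}^0(p)_{i\bar j}\bar z^iz^j$ (with $\mathrm{Ric}^0(p)$ the trace-free Ricci curvature at $p$), the pure-trace part cancelling precisely because $|z|^{2-2m}$ is flat-harmonic, while the full curvature expansion may in addition force a genuine degree-$(-2m)$ obstruction. All these borderline terms are $T$-invariant and are determined by the $2$-jet of $\omega$ at $p$; they are removed by subtracting explicit homogeneous corrections of degree $\geqslant 4-2m$ built from that $2$-jet --- for instance a term proportional to $|z|^{2-2m}\,\mathrm{Ric}^0(p)_{i\bar j}\bar z^iz^j$, whose flat bilaplacian is a nonzero multiple of the offending term because for $m>2$ the product of $|z|^{4-2m}$ with a harmonic polynomial of bidegree $(1,1)$ is not bilaplace-free --- together with (possibly) a multiple of $|z|^{4-2m}\log|z|$ for the trace part. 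Since $|z|^{4-2m}\log|z|$ is $o(|z|^{4-2m-\delta})$, all the corrective terms are absorbed into the error class $C^{k,\alpha}_{4-2m-\delta}(M_p)$ of \eqref{eq:Gexp}.

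After this preparation the $T$-invariant remainder $g_i$ lies in $C^{k-4,\alpha}_{\delta_i-4}(M_p)^T$ with $\delta_i<0$ and $\delta_i-4>-2m$, where $\delta_1 = 6-2m-\delta$ and $\delta_2 = 4-2m-\delta$; choosing $\delta$ so that $\delta_i$ is non-integer (and recovering all $\delta>0$ afterwards from the inclusions between weighted spaces), Proposition~\ref{prop:Mp} yields $\phi_i\in C^{k,\alpha}_{\delta_i}(M_p)^T$ and a compensating term $f_i\in\overline{\mathfrak{h}}$ with $\mathcal{D}^*_\omega\mathcal{D}_\omega\phi_i = -g_i + f_i$, so that $G_i$ satisfies \eqref{eq:DDstarG} and \eqref{eq:Gexp}. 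To identify $f_1,f_2$, observe that each $G_i$ is in $L^1_{\mathrm{loc}}(M)$ while every $\psi\in\overline{\mathfrak{h}}$ is a holomorphy potential, so $\mathcal{D}^*_\omega\mathcal{D}_\omega\psi=0$; testing the distributional identities \eqref{eq:DDstarG} against $\psi$ gives $\langle f_1,\psi\rangle_{L^2} = -\tfrac{4\pi^m}{(m-3)!}\psi(p)$ and $\langle f_2,\psi\rangle_{L^2} = \tfrac{2\pi^m}{(m-2)!}\Delta\psi(p)$ for all $\psi\in\overline{\mathfrak{h}}$. Finally I would recognise the $L^2$-representatives in $\overline{\mathfrak{h}}$ of the functionals $\psi\mapsto\psi(p)$ and $\psi\mapsto\Delta\psi(p)$: writing $\psi = c + h$ with $c$ its mean and $h = \langle\mu,\xi_\psi\rangle$ its zero-mean part, and using that the inner product on $\mathfrak{g}$ is the $L^2$-product of Hamiltonian functions and that $\mu$ has zero mean, one computes $\langle V^{-1}+\mu(p),\psi\rangle_{L^2} = c + h(p) = \psi(p)$ and, since $\langle\Delta\mu,\xi_\psi\rangle = \Delta h$, also $\langle\Delta\mu(p),\psi\rangle_{L^2} = \Delta\psi(p)$. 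Substituting yields $f_1 = -\tfrac{4\pi^m}{(m-3)!}(V^{-1}+\mu(p))$ and $f_2 = \tfrac{2\pi^m}{(m-2)!}\Delta\mu(p)$.

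The main obstacle is the construction of $G_2$ in the second step: one must determine exactly how the curvature of $\omega$ causes $\mathcal{D}^*_\omega\mathcal{D}_\omega|z|^{2-2m}$ to fail local integrability, and exhibit the explicit homogeneous corrections of degree $\geqslant 4-2m$ that cancel this obstruction and keep the error in $C^{k,\alpha}_{4-2m-\delta}(M_p)$. The computations in the first two steps are routine but lengthy; everything in the last two steps is Fredholm theory and bookkeeping with the moment map.
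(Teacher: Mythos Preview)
Your approach for $G_1$, and your duality argument identifying $f_1,f_2$, match the paper's exactly. The difference is in the construction of $G_2$. You propose to preprocess $\chi|z|^{2-2m}$ with explicit homogeneous corrections of degree $\geqslant 4-2m$ (and possibly a log term at the indicial root) so that after applying $\mathcal{D}^*_\omega\mathcal{D}_\omega$ the remainder becomes locally integrable, guaranteeing that the only distributional contribution at $p$ is the $\Delta\delta_p$ coming from the leading term. This can be made to work, but it is precisely what you flag as ``the main obstacle'', and it requires a careful asymptotic analysis at the indicial root $4-2m$. The paper bypasses this entirely: it applies Proposition~\ref{prop:Mp} directly to $\chi|z|^{2-2m}$ at weight $4-2m-\delta$ (the right-hand side lies in $C^{k-4,\alpha}_{-2m}(M_p)\subset C^{k-4,\alpha}_{-2m-\delta}(M_p)$ without any preprocessing), obtains a correction $\phi$ and a tentative $f_2$, and then simply observes that the resulting distributional identity on $M$ may pick up an unwanted multiple of $\delta_p$ in addition to $\Delta\delta_p$. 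That multiple is cancelled by adding $CG_1$ for a suitable constant $C$, since $G_1$ has already been built to produce exactly $\delta_p$. Your route would give a more explicit description of the asymptotics of $G_2$ near $p$; the paper's route gives a one-line resolution of the step you identify as hardest. One small slip: your condition ``$\delta_i-4>-2m$'' fails for $i=2$ (you get $-2m-\delta$), but Proposition~\ref{prop:Mp} needs only $\delta_i<0$ and non-integer, so this is harmless.
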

\begin{proof}
Let us define $\widetilde{G}_1$ using a cutoff function
to be equal to zero on $M\setminus B_1$, and equal to $|z|^{4-2m}$ on
$B_{1/2}$. Comparing $\omega$ to the flat metric near $p$, we find
that
\[ \mathcal{D}^*_\omega\mathcal{D}_\omega \widetilde{G}_1 \in
C^{k-4,\alpha}_{2-2m}(M_p).\] 
We can now use the inverse in Proposition~\ref{prop:Mp}. For small
$\delta > 0$ we have $6-2m-\delta\in (4-2m,0)$ (for $m > 3$ we can let
$\delta=0$), so we obtain a $\phi\in C^{k,\alpha}_{6-2m-\delta}$ and
$f_1\in \overline{\mathfrak{h}}$ such that
\[ \mathcal{D}^*_\omega\mathcal{D}_\omega( \widetilde{G}_1 - \phi) =
f_1,\text{ on }M_p, \]
and so we can let $G_1 = \widetilde{G}_1 - \phi$. The only
contribution to the distributional
part of $\mathcal{D}^*_\omega\mathcal{D}_\omega \widetilde{G}$ comes
from $\Delta_0^2 |z|^{4-2m}$ in the flat metric, giving the result. 

Similarly we define $\widetilde{G}_2$ using a cutoff function to equal
$|z|^{2-2m}$ on $B_{1/2}$ and to vanish outside $B_1$. Comparing with
the flat metric again, we obtain
\[ \mathcal{D}^*_\omega\mathcal{D}_\omega\widetilde{G} \in
C^{k-4,\alpha}_{-2m}(M_p). \]
Once again we can find $\phi\in C^{k,\alpha}_{4-2m-\delta}$ for small
$\delta > 0$ (note that $4-2m$ is an indicial root), and $f_2\in
\overline{\mathfrak{h}}$ such that
\[ \mathcal{D}^*_\omega\mathcal{D}_\omega (\widetilde{G}_2 - \phi) =
f_2,\text{ on }M_p. \]
In this case, there are several contributions to the distributional
part at $p$, but apart from the leading contribution of
$\Delta\delta_p$, the rest is a multiple of $\delta_p$. We can
therefore find a constant $C$ such that $G_2 = \widetilde{G}_2 - \phi
+ CG_1$ satisfies our requirements. 

In order to find $f_1$, we take the $L^2$-product of
\eqref{eq:DDstarG} with any element $g\in\mathfrak{h}$, to obtain
\[ 0 = \langle f_1, g\rangle + \frac{4\pi^m}{(m-3)!}g(p), \]
since $\mathcal{D}^*_\omega\mathcal{D}_\omega g =0$. By definition
$g(p) = \langle\mu(p),g\rangle$, and so it follows that the projection
of $f_1$ onto $\mathfrak{h}$ must be 
\[ \mathrm{pr}_\mathfrak{h} f_1 = -\frac{4\pi^m}{(m-3)!}\mu(p). \]
To obtain $f_1$ from this, we just need to take the $L^2$-product of
\eqref{eq:DDstarG} with the function 1. 
We can obtain the formula for $f_2$ similarly. 
\end{proof}

\begin{lem}\label{lem:Gamma}
	We can find a function $\Gamma$ on $M\setminus B_{r_\epsilon}$ such that
	\begin{equation}\label{eq:DDGamma}
          \mathcal{D}_\omega^*\mathcal{D}_\omega\Gamma = -h_1 \text{ on
        }M\setminus B_{1}
        \end{equation}
	for some $h_1\in\overline{\mathfrak{h}}$, and satisfying the following
        properties. On $B_{2r_\epsilon}\setminus B_{r_\epsilon}$ the
        function $\Gamma$ has the form
	\[ \Gamma = d_0\epsilon^{2m-2}|z|^{4-2m} +
	d_1\epsilon^{2m}|z|^{2-2m} + d_2\epsilon^{4m-4}|z|^{6-4m} + \Gamma_1, \]
	where
	\[
        \Vert\Gamma_1\Vert_{C^{k,\alpha}_{3-2m}(B_{2r_\epsilon}\setminus
          B_{r_\epsilon})}= O(\epsilon^\kappa),\]
        for some $\kappa > 2m$.
        On $M\setminus B_{2r_\epsilon}$ we have
        \begin{equation}\label{eq:annulusest}
          \Big\Vert \mathbf{s}(\omega + \ddbar\Gamma) -
          \frac{1}{2}\nabla\mathbf{s}(\omega)\cdot \nabla\Gamma -
          \mathbf{s}(\omega) -
          h_1\Big\Vert_{C^{0,\alpha}_{-1-2m}(M\setminus B_{2\epsilon})} = O(\epsilon^{\kappa}),
          \end{equation}
          for some $\kappa > 2m$. In addition
          \begin{equation}\label{eq:h1eq}\begin{aligned}
            h_1 &= -\frac{2\pi \epsilon^{2m-2}}{(m-2)!}(V^{-1} +
          \mu(p)) + \frac{\epsilon^{2m} \mathbf{s}(p)}{m!}(V^{-1} +
          \mu(p)) - d_1 \epsilon^{2m}
          \frac{2\pi^m}{(m-2)!}\Delta\mu(p) \\
          &= C - \epsilon^{2m-2}\left(c_1 -
            \frac{\epsilon^2}{m!}\mathbf{s}(p)\right)\mu(p) - 
          \epsilon^{2m}c_2\Delta\mu(p) ,
          \end{aligned}\end{equation}
          where $V = \mathrm{Vol}(M)$, $C$ is a constant, and
          $c_1,c_2 > 0$.  
\end{lem}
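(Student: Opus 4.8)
The plan is to build $\Gamma$ from the curved solutions of Lemma~\ref{lem:G}, a cut-off of the Euclidean term $|z|^{6-4m}$, and one more correction of order $\epsilon^{2m}$. Concretely I would set
\[
\Gamma = d_0\epsilon^{2m-2}G_1 + d_1\epsilon^{2m}G_2 + d_2\epsilon^{4m-4}\widetilde{G}_3 + \Gamma'',
\]
where $\widetilde{G}_3 = \chi|z|^{6-4m}$ for a cutoff $\chi$ equal to $1$ on $B_{1/2}$ and supported in $B_1$, $\Gamma''$ is the $\epsilon^{2m}$-correction described below, and everything is then restricted to $M\setminus B_{r_\epsilon}$. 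On the annulus $B_{2r_\epsilon}\setminus B_{r_\epsilon}$, which sits well inside $B_{1/2}$ once $\epsilon$ is small, $\widetilde{G}_3 = |z|^{6-4m}$ exactly and, by the expansions \eqref{eq:Gexp}, $G_1,G_2$ agree with $|z|^{4-2m},|z|^{2-2m}$ modulo errors in $C^{k,\alpha}_{6-2m-\delta}(M_p)$ and $C^{k,\alpha}_{4-2m-\delta}(M_p)$. Hence $\Gamma$ has the asserted form with $\Gamma_1 = -d_0\epsilon^{2m-2}(G_1-|z|^{4-2m}) - d_1\epsilon^{2m}(G_2-|z|^{2-2m}) + \Gamma''$, and converting the global weighted norms on $M_p$ into the $C^{k,\alpha}_{3-2m}$-norm on the annulus (where $|z|\sim r_\epsilon=\epsilon^\alpha$) yields $\|\Gamma_1\|_{C^{k,\alpha}_{3-2m}(B_{2r_\epsilon}\setminus B_{r_\epsilon})}\lesssim \epsilon^{2m-2+\alpha(3-\delta)} + \epsilon^{2m+\alpha(1-\delta)} + (\text{contribution of }\Gamma'')$. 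Since $\alpha=\frac{2m-1}{2m+1}$ satisfies $\alpha>2/3$ exactly when $m\geqslant 3$, the first exponent exceeds $2m$, so $\|\Gamma_1\|_{C^{k,\alpha}_{3-2m}}=O(\epsilon^\kappa)$ for some $\kappa>2m$; this is the same calibration of $\alpha$ used throughout Section~\ref{sec:approx}.

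On $M\setminus B_1$ the terms $\widetilde{G}_3$ and $\Gamma''|_{M\setminus B_1}$ contribute only their non-distributional parts, and $\mathcal{D}^*_\omega\mathcal{D}_\omega G_i = f_i$ by Lemma~\ref{lem:G}, so there $\mathcal{D}^*_\omega\mathcal{D}_\omega\Gamma = d_0\epsilon^{2m-2}f_1 + d_1\epsilon^{2m}f_2 + f''$ for an $f''\in\overline{\mathfrak{h}}$ coming from $\Gamma''$, and this is the definition of $-h_1$. For \eqref{eq:annulusest} I would use \eqref{eq:Lichn} to rewrite
\[
\mathbf{s}(\omega+\ddbar\Gamma) - \frac{1}{2}\nabla\mathbf{s}(\omega)\cdot\nabla\Gamma - \mathbf{s}(\omega) = -\mathcal{D}^*_\omega\mathcal{D}_\omega\Gamma + Q_\omega(\Gamma),
\]
so that the quantity to estimate is $Q_\omega(\Gamma) - h_1 - \mathcal{D}^*_\omega\mathcal{D}_\omega\Gamma$. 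Away from $B_{2r_\epsilon}$ the only part of $\mathcal{D}^*_\omega\mathcal{D}_\omega\Gamma$ beyond $-h_1$ (and the $\Gamma''$-piece) is $d_2\epsilon^{4m-4}\mathcal{D}^*_\omega\mathcal{D}_\omega\widetilde{G}_3$, of leading size $\epsilon^{4m-4}|z|^{2-4m}$; this cancels the leading term of $Q_\omega(\Gamma)$, which at this order is $Q_0(d_0\epsilon^{2m-2}|z|^{4-2m})$, also of size $\epsilon^{4m-4}|z|^{2-4m}$. The cancellation is precisely scalar-flatness of the Burns--Simanca metric expanded at order $|w|^{2-4m}$ in its potential $\psi$, namely that $\Delta_0^2(d_2|w|^{6-4m})$ equals the leading part of $Q_0(d_0|w|^{4-2m})$ — the relation that fixes $d_2$ in terms of $d_0^2$. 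What is left — the sub-leading parts of $Q_\omega(\Gamma)$ (controlled by Lemma~\ref{lem:Q}), the lower-order corrections in $\mathcal{D}^*_\omega\mathcal{D}_\omega\widetilde{G}_3$, and the $\Gamma''$- and $\Gamma_1$-type remainders — is $O(\epsilon^\kappa)$ with $\kappa>2m$ in $C^{0,\alpha}_{-1-2m}(M\setminus B_{2r_\epsilon})$, again because of the value of $\alpha$.

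For the formula for $h_1$: the $\epsilon^{2m-2}\mu(p)$ and $\epsilon^{2m}\Delta\mu(p)$ terms come verbatim from $-d_0\epsilon^{2m-2}f_1$ and $-d_1\epsilon^{2m}f_2$ with $f_1 = -\frac{4\pi^m}{(m-3)!}(V^{-1}+\mu(p))$ and $f_2 = \frac{2\pi^m}{(m-2)!}\Delta\mu(p)$ from Lemma~\ref{lem:G}; substituting $d_0 = -\frac{1}{2\pi^{m-1}(m-2)}$ and using $(m-2)(m-3)! = (m-2)!$ gives $-\frac{2\pi\epsilon^{2m-2}}{(m-2)!}(V^{-1}+\mu(p))$ and $-d_1\epsilon^{2m}\frac{2\pi^m}{(m-2)!}\Delta\mu(p)$, so $c_1 = \frac{2\pi}{(m-2)!}>0$ and $c_2 = \frac{2\pi^m d_1}{(m-2)!}>0$, the latter positive because $d_1>0$ by Lemma~\ref{lem:BSmetric}. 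The remaining piece $\frac{\epsilon^{2m}\mathbf{s}(p)}{m!}(V^{-1}+\mu(p))$ is the role of $\Gamma''$: near $p$ the leading model term $d_0\epsilon^{2m-2}|z|^{4-2m}$, when inserted into $\mathbf{s}(\omega+\ddbar(\cdot))$, interacts at order $\epsilon^{2m}$ with the quartic part $A_4$ of the K\"ahler potential of $\omega$, and since $\Delta_0^2 A_4 = -\mathbf{s}(p)$ by Lemma~\ref{lem:A4A5} this produces a distributional source of size $\frac{\epsilon^{2m}\mathbf{s}(p)}{m!}\,\delta_p$; taking $\Gamma''$ to solve this away with the inverse of Proposition~\ref{prop:Mp} and then running the $L^2$-pairing argument of Lemma~\ref{lem:G} — pairing the distributional identity with $g\in\mathfrak{h}$, where $g(p) = \langle\mu(p),g\rangle$, and with the constant function — gives the claimed multiple of $V^{-1}+\mu(p)$. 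Choosing the right-hand side for $\Gamma''$ in a weighted space of weight above $3-2m$ keeps $\Gamma''$ within the error budget of the first two paragraphs.

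The hard part, I expect, is the estimate \eqref{eq:annulusest}: organising all contributions to $\mathbf{s}(\omega+\ddbar\Gamma)-\frac{1}{2}\nabla\mathbf{s}(\omega)\cdot\nabla\Gamma-\mathbf{s}(\omega)-h_1$ by their orders in $\epsilon$ and in $|z|$, seeing that the borderline piece of size $\epsilon^{4m-4}|z|^{2-4m}$ is annihilated by the Burns--Simanca ODE, and confirming that what remains is genuinely $O(\epsilon^\kappa)$ with $\kappa>2m$ in the weighted norm. This is exactly what forces the choice $\alpha=\frac{2m-1}{2m+1}$, and, as remarked in Section~\ref{sec:general}, it is also why the argument is confined to $m>2$: for $m=2$ the same budget would demand one further term in the expansion.
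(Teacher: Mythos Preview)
Your overall construction is right and matches the paper's: $\Gamma$ is built from $G_1$, $G_2$, a cutoff of $|z|^{6-4m}$, and one extra correction; the Burns--Simanca relation supplies the cancellation between $Q_0(d_0\epsilon^{2m-2}|z|^{4-2m})$ and $\Delta_0^2(d_2\epsilon^{4m-4}|z|^{6-4m})$; and the weighted-norm bookkeeping on the three regions $M\setminus B_1$, $B_1\setminus B_{1/2}$, and $B_{1/2}\setminus B_{2r_\epsilon}$ goes as you indicate. (Your $\Gamma_1$ has the signs flipped, but that is harmless for the norm estimate.)

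Where your account goes astray is the $\mathbf{s}(p)$ correction. There is no ``interaction with $A_4$'' producing a distributional source $\frac{\epsilon^{2m}\mathbf{s}(p)}{m!}\delta_p$: on $M\setminus B_{r_\epsilon}$ the point $p$ is absent, and cross-terms between $|z|^{4-2m}$ and $A_4$ feed into $(L_\omega-L_0)$ and $(Q_\omega-Q_0)$, not into a delta function. In fact the estimate \eqref{eq:annulusest} holds perfectly well without any such correction. The $\mathbf{s}(p)$ piece of $h_1$ is not forced by this lemma at all; it is inserted by hand so that $h_1$ has the form needed later, in the integral-matching step before the construction of $\Omega_2$ and in the final expansion of Theorem~\ref{thm:gluing}. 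The paper's device is simply to add the multiple
\[
-\,\epsilon^{2m}\,\frac{(m-3)!\,\mathbf{s}(p)}{4\pi^m\, m!}\,G_1
\]
to $\Gamma$. Since $\mathcal{D}^*_\omega\mathcal{D}_\omega G_1 = f_1$ on $M\setminus B_1$ with $f_1 = -\frac{4\pi^m}{(m-3)!}(V^{-1}+\mu(p))$, this immediately puts $\frac{\epsilon^{2m}\mathbf{s}(p)}{m!}(V^{-1}+\mu(p))$ into $h_1$, and on the annulus it only contributes $C\epsilon^{2m}|z|^{4-2m}$ to $\Gamma_1$, with $C^{k,\alpha}_{3-2m}$-norm $O(\epsilon^{2m+\alpha})$, well within budget. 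No separate $\Gamma''$, no fresh invocation of Proposition~\ref{prop:Mp}, and no extra $L^2$-pairing argument are required. If you were to actually carry out your construction of $\Gamma''$, you would in any case recover precisely this multiple of $G_1$, so the endpoint is the same; but the story you tell to motivate it is not what is happening.
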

\begin{proof}
Let us use a cutoff function to define $G_3$ to be zero outside $B_1$,
and equal to $|z|^{6-4m}$ in $B_{1/2}$. We let
\[ \Gamma = d_0\epsilon^{2m-2}G_1 + d_1 \epsilon^{2m} G_2 -
\epsilon^{2m}\frac{(m-3)!\mathbf{s}(p)}{4\pi^mm!} G_1 + 
d_2\epsilon^{4m-4}G_3, \]
where $G_1$ and $G_2$ are defined in Lemma~\ref{lem:G}. Then
\eqref{eq:DDGamma} and \eqref{eq:h1eq} follow 
from the properties of $G_1,G_2$.   
On the annulus $B_{2r_\epsilon}\setminus
B_{r_\epsilon}$
 we have
\[ \Gamma =  d_0\epsilon^{2m-2}|z|^{4-2m} +
	d_1\epsilon^{2m}|z|^{2-2m} + d_2\epsilon^{4m-4}|z|^{6-4m} +
        \Gamma_1, \]
where 
\[ \Gamma_1 = d_0\epsilon^{2m-2}(G_1 - |z|^{4-2m}) +
d_1\epsilon^{2m}(G_2 - |z|^{2-2m}) + C\epsilon^{2m}|z|^{4-2m}, \]
for some constant $C$. 
From \eqref{eq:Gexp} it follows that
\[ \Vert\Gamma_1\Vert_{C^{k,\alpha}_{3-2m}(B_{2r_\epsilon}\setminus
  B_{r_\epsilon})} = O(\epsilon^\kappa) \]
for some $\kappa > 2m$. For instance
\[ \Vert G_1 -
|z|^{4-2m}\Vert_{C^{k,\alpha}_{3-2m}(B_{2r_\epsilon} \setminus
  B_{r_\epsilon})} \leqslant Cr_\epsilon^{3-\delta}\Vert G_1 -
|z|^{4-2m}\Vert_{ C^{k,\alpha}_{6-2m-\delta}(B_{2r_\epsilon}\setminus
  B_{r_\epsilon})}, \]
and for sufficiently small $\delta > 0$ we have 
\[ \epsilon^{2m-2}r_\epsilon^{3-\delta} = O(\epsilon^\kappa), \]
for some $\kappa > 2m$, by our choice of $r_\epsilon$, since
\[ \frac{2m-1}{2m+1} > \frac{2}{3}\]
for $m\geqslant 3$. The other terms are larger and are handled similarly. 

For the scalar curvature of $\omega + \ddbar\Gamma$ we have
\[ \mathbf{s}(\omega + \ddbar\Gamma) -
\frac{1}{2}\nabla\mathbf{s}(\omega)\cdot\nabla \Gamma  -
\mathbf{s}(\omega) - h_1 = Q_\omega(\Gamma) -
\mathcal{D}^*_\omega \mathcal{D}_\omega\Gamma - h_1. \]
We will work on the 3 regions $M\setminus B_1$, $B_1\setminus B_{1/2}$
and $B_{1/2}\setminus B_{2r_\epsilon}$ separately. 

On $M\setminus B_1$
we have $\mathcal{D}^*_\omega\mathcal{D}_\omega\Gamma + h_1 = 0$, and
\[ \Vert Q_\omega(\Gamma)\Vert_{C^{0,\alpha}_{-1-2m}(M\setminus B_1)}
\leqslant C\Vert \Gamma\Vert_{C^{4,\alpha}_2}\Vert
\Gamma\Vert_{C^{4,\alpha}_{3-2m}}. \]
The weight is irrelevant outside $B_1$, so we have
\[ \Vert Q_\omega(\Gamma)\Vert_{C^{0,\alpha}(M\setminus B_1)}
\leqslant C(\epsilon^{2m-2})^2 = O(\epsilon^\kappa) \]
for some $\kappa > 2m$, as long as $m > 2$. 

On $B_1\setminus B_{1/2}$ we can still ignore the weights, so
we still have the same estimate for
$Q_\omega(\Gamma)$, but now
\[ \mathcal{D}^*_\omega\mathcal{D}_\omega\Gamma + h_1 =
C\epsilon^{4m-4}\mathcal{D}^*_\omega\mathcal{D}_\omega(G_3).\]
It follows from this that
\[ \Vert \mathcal{D}^*_\omega\mathcal{D}_\omega\Gamma +
h_1\Vert_{C^{0,\alpha}( B_1\setminus B_{1/2})} = O(\epsilon^{4m-4}) =
O(\epsilon^\kappa) \]
for $\kappa > 2m$, as long as $m > 2$. 

The most delicate estimate is on $B_{1/2}\setminus B_{r_\epsilon}$. It
is best to work on the annuli $A_r = B_{2r}\setminus B_r$, for
$r\in(r_\epsilon, 1/4)$. We
have
\[ \mathcal{D}^*_\omega\mathcal{D}_\omega\Gamma + h_1 =
d_2\epsilon^{4m-4}\mathcal{D}^*_\omega\mathcal{D}_\omega
|z|^{6-4m}, \]
so
\[
 \Vert \mathcal{D}^*_\omega\mathcal{D}_\omega \Gamma + h_1 -
d_2\epsilon^{4m-4} \Delta^2_0 |z|^{6-4m}\Vert_{ C^{0,\alpha}_{-1-2m}} = d_2\epsilon^{4m-4}\Vert
(\mathcal{D}^*_\omega\mathcal{D}_\omega - \Delta^2_0)|z|^{6-4m}\Vert_{C^{0,\alpha}_{-1-2m}}.\]
On the annulus $A_r$ we have
\[\begin{aligned}
 \Vert (\mathcal{D}^*_\omega\mathcal{D}_\omega - \Delta^2_0)
|z|^{6-4m}\Vert_{C^{0,\alpha}_{-1-2m}} &\leqslant \Vert
\phi\Vert_{C^{4,\alpha}_2} \Vert |z|^{6-4m}\Vert_{C^{4,\alpha}_{3-2m}}
\\
&\leqslant C r^2\cdot r^{3-2m},
\end{aligned}\]
where $\phi=O(|z|^4)$. We have $r\geqslant r_\epsilon$, and so
\[ \epsilon^{4m-4}r^{5-2m} = O(\epsilon^\kappa) \]
for some $\kappa > 2m$, since $\alpha < 1$. 

We also have
\[ Q_\omega(\Gamma) = Q_\omega(d_0\epsilon^{2m-2}|z|^{4-2m}) + \Big[
Q_\omega(\Gamma) - Q_\omega(d_0\epsilon^{2m-2}|z|^{4-2m})\Big]. \]
The next highest order term in $\Gamma$ after
$d_0\epsilon^{2m-2}|z|^{4-2m}$ is $d_1\epsilon^{2m}|z|^{2-2m}$, so on
the annulus $A_r$
\[ \begin{aligned}
 \Vert  Q_\omega(\Gamma) - Q_\omega(d_0\epsilon^{2m-2}|z|^{4-2m})
\Vert_{C^{0,\alpha}_{-1-2m}} &\leqslant C\epsilon^{4m} \Vert
|z|^{2-2m}\Vert_{C^{4,\alpha}_2} \Vert |z|^{2-2m}\Vert_{C^{4,\alpha}_{
    3-2m}} \\
&\leqslant C\epsilon^{4m}r^{-2m}r^{-1} = O(\epsilon^\kappa),
\end{aligned}\]
for $\kappa > 2m$, since 
\[ \alpha < \frac{2m}{2m+1}. \]
What remains is to estimate
\[ Q_\omega(d_0\epsilon^{2m-2}|z|^{4-2m}) -
\Delta^2_0(d_2\epsilon^{4m-4}|z|^{6-4m}). \]   
It is not hard to check that both terms are  of the same order, and for
$m > 3$ they are sufficiently small. However for $m=3$ we need to work
harder. On the annulus $A_r$, using Lemma~\ref{lem:Q2}, we have
\[ \begin{aligned}
 \Vert Q_\omega(d_0\epsilon^{2m-2}|z|^{4-2m}) &-
Q_0(d_0\epsilon^{2m-2}|z|^{4-2m})\Vert_{C^{0,\alpha}_{-1-2m}}
\leqslant \\
&\qquad \leqslant Cr^2 \epsilon^{4m-4}\Vert |z|^{4-2m}\Vert_{C^{4,\alpha}_2}
\Vert |z|^{4-2m}\Vert_{C^{4,\alpha}_{3-2m}} \\
&\qquad \leqslant C\epsilon^{4m-4}r^{5-2m} = O(\epsilon^\kappa),
\end{aligned}\]  
for $\kappa > 2m$, where $Q_0$ is given by the flat metric. 
Using that $\epsilon^2\eta$ is scalar flat, we have
\[ 0 = \mathbf{s}\Big[\ddbar\Big(\frac{|z|^2}{2} +
d_0\epsilon^{2m-2}|z|^{4-2m} + d_1\epsilon^{2m}|z|^{2-2m} +
d_2\epsilon^{4m-4}|z|^{6-4m} +
\epsilon^2\psi_{4-4m}(\epsilon^{-1}z)\Big)\Big],\] 
so if we write $\epsilon^2\eta = \ddbar\Big(\frac{|z|^2}{2} +
\epsilon^2\psi(\epsilon^{-1}z)\Big)$, then we get
\[ \begin{aligned} -\Delta_0^2(d_2\epsilon^{4m-4}|z|^{6-4m}) &+
  Q_0(d_0\epsilon^{2m-2}|z|^{4-2m}) =
  \Delta_0^2(\epsilon^2\psi_{4-4m}(\epsilon^{-1}z)) + \\
&\quad + Q_0(d_0\epsilon^{2m-2}|z|^{4-4m}) -
Q_0(\epsilon^2\psi(\epsilon^{-1}z)),
\end{aligned}\]
and so on $A_r$ we have
\[\begin{aligned} \Vert -\Delta_0^2(d_2\epsilon^{4m-4}|z|^{6-4m}) &+
  Q_0(d_0\epsilon^{2m-2}|z|^{4-2m}) \Vert_{C^{0,\alpha}_{-1-2m}}
  \leqslant C\epsilon^{4m-2}\Vert
  |z|^{4-4m}\Vert_{C^{4,\alpha}_{3-2m}} + \\ &\qquad + C\epsilon^{4m}\Vert
  |z|^{2-2m}\Vert_{C^{4,\alpha}_2} \Vert
  |z|^{2-2m}\Vert_{C^{4,\alpha}_{3-2m}} \\
&\quad\leqslant C\epsilon^{4m-2}r^{1-2m} + C\epsilon^{4m} r^{-1-2m} = O(\epsilon^\kappa),  
\end{aligned} \]
for $\kappa > 2m$, where we used that the largest order term in
$\epsilon^2\psi(\epsilon^{-1}z)$ after the leading term is
$\epsilon^{2m}|z|^{2-2m}$. Combining all these estimates, we obtain
the required bound \eqref{eq:annulusest}. 
\end{proof}

Now to deal with modifying $\epsilon^2\eta$, we have the following. 
\begin{lem}\label{lem:Psi}
We can find a function $\Psi$ on $\Bl_0\mathbf{C}^m$ of the form
\[ \Psi = A_4(z) + A_5(z) + \Psi_1, \]
where 
\[ \Vert \Psi_1\Vert_{C^{k,\alpha}_{3-2m}(B_{2r_\epsilon}\setminus
  B_{r_\epsilon})} = O(\epsilon^\kappa), \]
and in addition
\[ \Vert \mathbf{s}(\epsilon^2\eta + \ddbar\Psi) -
\mathbf{s}(p)\Vert_{C^{0,\alpha}_{-1-2m}(B_{r_\epsilon})} =
O(\epsilon^\kappa), \]
for some $\kappa > 2m$. 
\end{lem}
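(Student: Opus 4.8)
The plan is to construct $\Psi$ in the same way as $\Gamma$ in Lemma~\ref{lem:Gamma}: rather than writing it down explicitly, I would obtain it from the inverse of $\mathcal{D}^*_\eta\mathcal{D}_\eta$ provided by Proposition~\ref{prop:Bl0}, arranged so that the \emph{linear} part of the scalar curvature is exactly the constant $\mathbf{s}(p)$, leaving only the nonlinear term $Q_\eta$ and a rapidly decaying tail $\Psi_1$. The first step is a rescaling: writing $w = z/\epsilon$ and $\widehat\Psi(w) = \epsilon^{-2}\Psi(\epsilon w)$, one checks that $\epsilon^2\eta + \ddbar\Psi$ equals $\epsilon^2$ times the pullback under $z\mapsto w$ of $\eta + \ddbar\widehat\Psi$, so $\mathbf{s}(\epsilon^2\eta + \ddbar\Psi)(z) = \epsilon^{-2}\mathbf{s}(\eta + \ddbar\widehat\Psi)(z/\epsilon)$. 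Since $\eta$ is scalar flat and $\nabla\mathbf{s}(\eta) = 0$, Equations~\eqref{eq:scal} and \eqref{eq:Lichn} give $\mathbf{s}(\eta + \ddbar\phi) = -\mathcal{D}^*_\eta\mathcal{D}_\eta\phi + Q_\eta(\phi)$, so I would look for $\widehat\Psi = \epsilon^2 P_4 + \epsilon^3 P_5$ with $P_4 - A_4$ and $P_5 - A_5$ decaying at infinity and $\mathcal{D}^*_\eta\mathcal{D}_\eta P_4 = -\mathbf{s}(p)$, $\mathcal{D}^*_\eta\mathcal{D}_\eta P_5 = 0$. Then $\Psi(z) = A_4(z) + A_5(z) + \Psi_1(z)$ with $\Psi_1(z) = -\epsilon^4(P_4 - A_4)(z/\epsilon) - \epsilon^5(P_5 - A_5)(z/\epsilon)$, and by construction $\mathbf{s}(\epsilon^2\eta + \ddbar\Psi) - \mathbf{s}(p) = \epsilon^{-2}Q_\eta(\widehat\Psi)(z/\epsilon)$.

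To construct $P_4$ and $P_5$, I would view $A_4, A_5$ as smooth functions on $\Bl_0\mathbf{C}^m$ via the blowdown, lying in $C^{k,\alpha}_4$ and $C^{k,\alpha}_5$. By Lemma~\ref{lem:BSmetric} the metric $\eta$ differs from the Euclidean one by $\ddbar\psi$ with $\psi\sim|w|^{4-2m}$, so $\mathcal{D}^*_\eta\mathcal{D}_\eta - \Delta_0^2$ has coefficients decaying like $|w|^{2-2m}, |w|^{1-2m}, |w|^{-2m}$ in its fourth, third and second order parts. Combined with Lemma~\ref{lem:A4A5} this shows $E_4 := \mathbf{s}(p) + \mathcal{D}^*_\eta\mathcal{D}_\eta A_4 \in C^{k-4,\alpha}_{2-2m}$ and $E_5 := \mathcal{D}^*_\eta\mathcal{D}_\eta A_5 \in C^{k-4,\alpha}_{3-2m}$. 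Since $6-2m, 7-2m > 4-2m$, Proposition~\ref{prop:Bl0} provides $\chi_4\in C^{k,\alpha}_{6-2m}$ and $\chi_5\in C^{k,\alpha}_{7-2m}$ with $\mathcal{D}^*_\eta\mathcal{D}_\eta\chi_4 = E_4$ and $\mathcal{D}^*_\eta\mathcal{D}_\eta\chi_5 = E_5$; I would set $P_4 = A_4 - \chi_4$, $P_5 = A_5 - \chi_5$, and cut $\Psi$ off to zero outside $B_{2r_\epsilon}$. On $B_{2r_\epsilon}$ the perturbation $\ddbar\Psi$ has size $O(r_\epsilon^2)$ relative to $\omega_\epsilon$, so $\epsilon^2\eta + \ddbar\Psi$ is a genuine K\"ahler metric there once $\epsilon$ is small.

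Both estimates are then weight bookkeeping. For $\Psi_1$, on $B_{2r_\epsilon}\setminus B_{r_\epsilon}$ one has $|w|\sim r_\epsilon/\epsilon$, and rescaling the bounds $|\chi_4|\lesssim|w|^{6-2m}$, $|\chi_5|\lesssim|w|^{7-2m}$ (with derivatives) gives $\Vert\Psi_1\Vert_{C^{k,\alpha}_{3-2m}(B_{2r_\epsilon}\setminus B_{r_\epsilon})} \lesssim \epsilon^{2m-2}r_\epsilon^3 = \epsilon^{2m-2+3\alpha}$. For the scalar curvature, near the exceptional divisor $\epsilon^{-2}Q_\eta(\widehat\Psi)$ is $O(\epsilon^2)$, contributing $O(\epsilon^{2m+1})$ to the weighted norm; on an annulus $A_r$ with $\epsilon\leqslant r\leqslant r_\epsilon$ the dominant contribution comes from $\epsilon^{-2}Q_\eta(\epsilon^2 A_4)$, which by a Lemma~\ref{lem:Q}-type estimate behaves like $\epsilon^2|w|^2 \sim r^2$, so $r^{1+2m}\cdot r^2$ has supremum $r_\epsilon^{3+2m} = \epsilon^{\alpha(2m+3)}$; the cross terms and the contributions of $A_5, \chi_4, \chi_5$ are smaller. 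With $\kappa := 2m-2+3\alpha$ both exponents exceed $\kappa$, and $\kappa > 2m$ precisely because $\alpha = \tfrac{2m-1}{2m+1} > \tfrac23$ for $m\geqslant 3$.

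The hard part will be exactly this last step: one must verify that every error term — the failure of $\mathcal{D}^*_\eta\mathcal{D}_\eta A_4$ to be constant, the nonlinear term $Q_\eta$, and the tail $\Psi_1$ — carries a weighted norm bounded by $\epsilon^\kappa$ with a single $\kappa > 2m$, and the tightest constraint, coming from the quartic $A_4$ interacting with the curvature of the Burns-Simanca metric (the exponent $2m-2+3\alpha$), is exactly what pins down the choice $r_\epsilon = \epsilon^\alpha$ and forces $m > 2$; this is the same inequality $\tfrac{2m-1}{2m+1} > \tfrac23$ that appears in Lemma~\ref{lem:Gamma}.
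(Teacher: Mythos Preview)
Your argument is correct, but it is organized differently from the paper's. Both proofs rescale to the $w$-variable and both invoke Proposition~\ref{prop:Bl0} to produce the correction $\Psi_1$, but they invert $\mathcal{D}^*_\eta\mathcal{D}_\eta$ on different right-hand sides. The paper first \emph{cuts off} $A_4, A_5$ on the annulus $B_{4R_\epsilon}\setminus B_{2R_\epsilon}$ and then solves $\mathcal{D}^*_\eta\mathcal{D}_\eta\widetilde\Psi = Q_\eta(\epsilon^2\widetilde A_4+\epsilon^3\widetilde A_5)$, so the \emph{nonlinear} error is absorbed into $\Psi_1$ and the remaining scalar-curvature error is the linear discrepancy $(L_\eta+\Delta_0^2)(\epsilon^2\widetilde A_4+\epsilon^3\widetilde A_5)$ coming from $\eta\neq$ Euclidean. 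You do the dual thing: you leave $A_4,A_5$ uncut (their contribution $(\mathcal{D}^*_\eta\mathcal{D}_\eta-\Delta_0^2)A_j$ already decays like $|w|^{2-2m}$), solve away this \emph{linear} error to produce $\chi_4,\chi_5$, and are left with the nonlinear term $Q_\eta(\widehat\Psi)$. The bookkeeping is genuinely exchanged: in the paper the $\Psi_1$ bound is the loose $r_\epsilon^{2m+3}=\epsilon^{\alpha(2m+3)}$ while the tight constraint $\alpha>2/3$ shows up in the scalar-curvature estimate (the term $\epsilon^2R_\epsilon^3$); in your version the tight $\epsilon^{2m-2+3\alpha}$ sits in $\Psi_1$ and the scalar-curvature bound $\epsilon^{\alpha(2m+3)}$ is the loose one. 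Either way the binding inequality is $\alpha>2/3$, i.e.\ $m\geqslant 3$, exactly as in Lemma~\ref{lem:Gamma}. Your approach has the small advantage that $\chi_4,\chi_5$ are $\epsilon$-independent, making the parallel with the Green's functions $G_1,G_2$ of Lemma~\ref{lem:G} more transparent; the paper's approach avoids a minor technicality, namely that your weights $6-2m$ and $7-2m$ are integers and hence indicial for $\Delta_0^2$ when $m=3$, so to be safe you should take $\chi_4\in C^{k,\alpha}_{6-2m-\tau}$ for small $\tau>0$ (this does not change the final exponent).
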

\begin{proof}
We will work in terms of $\eta$ with the variable
$w=\epsilon^{-1}z$. Write $R_\epsilon = \epsilon^{-1}r_\epsilon$, so
that in terms of $w$, 
we are gluing on the annulus $B_{2R_\epsilon}\setminus
B_{R_\epsilon}$.

Write $\widetilde{A}_4(w), \widetilde{A}_5(w)$ for the functions
$A_4(w)$ and $A_5(w)$ cut off on the annulus $B_{4R_\epsilon}\setminus
B_{2R_\epsilon}$. Since
\[ \Vert \epsilon^2\widetilde{A}_4 +
\epsilon^3\widetilde{A}_5\Vert_{C^{4,\alpha}_2} \leqslant
C\epsilon^2R_\epsilon^2 \ll 1, \]
we have
\[\Vert Q_\eta(\epsilon^2\widetilde{A}_4 + \epsilon^3 \widetilde{A}_5)
\Vert_{C^{0,\alpha}_0} \leqslant C
(\epsilon^2R_\epsilon^2)(\epsilon^2) = C\epsilon^4R_\epsilon^2. \]
It follows from Proposition~\ref{prop:Bl0} that we can find
$\widetilde{\Psi}$ such that
\[ \mathcal{D}^*_\eta\mathcal{D}_\eta\widetilde{\Psi} =
Q_\eta(\epsilon^2\widetilde{A}_4 + \epsilon^3\widetilde{A}_5),\]
and
\[ \Vert \widetilde{\Psi}\Vert_{C^{4,\alpha}_4} \leqslant
C\epsilon^4R_\epsilon^2 = C\epsilon^2r_\epsilon^2. \]
Setting $\Psi_1(z) = \epsilon^2\widetilde{\Psi}(\epsilon^{-1}z)$,
we then have
\[ \Vert \Psi_1\Vert_{C^{4,\alpha}_{3-2m}(B_{2r_\epsilon}\setminus
  B_{r_\epsilon})} \leqslant Cr_\epsilon^{2m+1}
\Vert\Psi_1\Vert_{C^{4,\alpha}_4(B_{2r_\epsilon}\setminus
  B_{r_\epsilon})} \leqslant Cr_\epsilon^{2m+1}r_\epsilon^2 =
O(\epsilon^\kappa), \]
for $\kappa > 2m$, since
\[ \alpha > \frac{2m}{2m+3}. \]
In addition, using Lemma~\ref{lem:A4A5},
\[\begin{aligned} \mathbf{s}(\eta + \ddbar(\epsilon^2\widetilde{A}_4 +
\epsilon^3\widetilde{A}_5 &+ \widetilde{\Psi})) =
L_\eta(\epsilon^2\widetilde{A}_4 + \epsilon^3\widetilde{A}_5 +
\widetilde{\Psi}) + Q_\eta(\epsilon^2\widetilde{A}_4 +
\epsilon^3\widetilde{A}_5 + \widetilde{\Psi}) \\
&= \epsilon^2\mathbf{s}(p) + L_\eta(\widetilde{\Psi}) + (L_\eta +
\Delta^2_0)(\epsilon^2\widetilde{A}_4 + \epsilon^3\widetilde{A}_5) \\
&\quad + Q_\eta(\epsilon^2\widetilde{A}_4 +
\epsilon^3\widetilde{A}_5) \\
&\quad + Q_\eta(\epsilon^2\widetilde{A}_4 + \epsilon^3\widetilde{A}_5
+ \widetilde{\Psi}) - Q_\eta(\epsilon^2\widetilde{A}_4 +
\epsilon^4\widetilde{A}_5). 
\end{aligned} \]
Using the equality $\mathcal{D}^*_\eta\mathcal{D}_\eta = L_\eta$ we
have $L_\eta(\widetilde{\Psi}) + Q_\eta(\epsilon^2\widetilde{A}_4 +
\epsilon^3\widetilde{A}_5) = 0$. Using
the fact that $\eta$ differs from the flat metric by order
$|w|^{2-2m}$, we get
\[ \Vert \mathbf{s}(\eta + \ddbar(\epsilon^2\widetilde{A}_4 +
\epsilon^3\widetilde{A}_5 + \widetilde{\Psi})) -
\epsilon^2\mathbf{s}(p)\Vert_{C^{ 0,\alpha}_{-1-2m}(B_{2R_\epsilon})} \leqslant
C(\epsilon^2R_\epsilon^3 + \epsilon^8R_\epsilon^{7+2m}) =
O(\epsilon^{1+\delta}), \]
for some $\delta > 0$. 
Since  on the ball $B_{R_\epsilon}$ in terms of $w$ (and on
$B_{r_\epsilon}$ in terms of $z$) we have
\[ \mathbf{s}(\epsilon^2\eta + \ddbar\Psi) =
\epsilon^{-2}\mathbf{s}(\eta + \ddbar(\epsilon^2\widetilde{A}_4 +
\epsilon^3\widetilde{A}_5 + \widetilde{\Psi})), \]
it follows that
\[ \Vert \mathbf{s}(\epsilon^2\eta + \ddbar\Psi) -
\mathbf{s}(p)\Vert_{C^{0,\alpha}_{-1-2m}(B_{r_\epsilon})} = O(\epsilon^\kappa), \]
for some $\kappa > 2m$. 
\end{proof}

We now define our new approximate metric $\Omega_1$ to be equal
to $\omega + \ddbar\Gamma$ on $M\setminus B_{2r_\epsilon}$, equal to
$\epsilon^2\eta + \ddbar\Psi$ on $B_{r_\epsilon}$, and on the annular
region $B_{2r_\epsilon}\setminus B_{r_\epsilon}$ we let
\[ \begin{aligned}
 \Omega_1 =& \ddbar\Big(\frac{|z|^2}{2} + A_4 + A_5 +
 \gamma_1\phi_6 + \gamma_1\Gamma_1 +
  d_0\epsilon^{2m-2}|z|^{4-2m} + d_1\epsilon^{2m}|z|^{2-2m} \\ &+
  d_2\epsilon^{4m-4}|z|^{6-4m} + \gamma_2
  \epsilon^2\psi_{4-4m}(\epsilon^{-1}z) + \gamma_2\Psi_1\Big).
\end{aligned}\]

\begin{lem}\label{lem:annularscal}
On the annular region $B_{2r_\epsilon}\setminus B_{r_\epsilon}$ we
have
\[ \Vert \mathbf{s}(\Omega_1) - \mathbf{s}(p)
\Vert_{C^{0,\alpha}_{-1-2m}(B_{2r_\epsilon}\setminus B_{r_\epsilon})}=
O(\epsilon^\kappa), \]
for some $\kappa > 2m$. 
\end{lem}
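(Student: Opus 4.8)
On the annular region $A:=B_{2r_\epsilon}\setminus B_{r_\epsilon}$ the plan is to run the whole computation in the local normal coordinates $z$ near $p$. By the definition of $\Omega_1$ its potential on $A$, after subtracting $|z|^2/2$, equals $A_4+A_5+\psi_\epsilon+E$, where
\[ \psi_\epsilon := d_0\epsilon^{2m-2}|z|^{4-2m}+d_1\epsilon^{2m}|z|^{2-2m}+d_2\epsilon^{4m-4}|z|^{6-4m}+\epsilon^2\psi_{4-4m}(\epsilon^{-1}z) \]
is exactly the potential of $\epsilon^2\eta$ minus $|z|^2/2$ from \eqref{eq:eta1}, and $E:=\gamma_1\phi_6+\gamma_1\Gamma_1+\gamma_2\Psi_1-\gamma_1\epsilon^2\psi_{4-4m}(\epsilon^{-1}z)$. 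On $A$ every term beyond $|z|^2/2$ has small $C^{4,\alpha}_2$ norm: for the pieces of $\psi_\epsilon$ this is precisely the computation in the proof of Lemma~\ref{lem:Gamma} that uses $\alpha<1$, while $\phi_6\in C^{k,\alpha}_6$ and $\Gamma_1,\Psi_1=O(\epsilon^\kappa)$ in $C^{k,\alpha}_{3-2m}$ are controlled directly. Hence $\omega_\epsilon$ is uniformly equivalent to the flat metric on $A$, the weighted norms may be computed with either metric, and $\mathbf{s}(\Omega_1)=-\Delta_0^2(A_4+A_5+\psi_\epsilon+E)+Q_0(A_4+A_5+\psi_\epsilon+E)$, with $Q_0$ the nonlinear remainder of the flat metric.

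Next I would insert two identities: Lemma~\ref{lem:A4A5} gives $-\Delta_0^2(A_4+A_5)=\mathbf{s}(p)$, and scalar-flatness of $\epsilon^2\eta$ gives $-\Delta_0^2\psi_\epsilon+Q_0(\psi_\epsilon)=0$. Subtracting these yields
\[ \mathbf{s}(\Omega_1)-\mathbf{s}(p)=-\Delta_0^2E+\big[Q_0(A_4+A_5+\psi_\epsilon+E)-Q_0(\psi_\epsilon)\big]. \]
The point of this rearrangement is that the genuinely large contributions — above all $\Delta_0^2(d_2\epsilon^{4m-4}|z|^{6-4m})$, which by itself is \emph{not} $O(\epsilon^\kappa)$ in $C^{0,\alpha}_{-1-2m}(A)$ with $\kappa>2m$ when $m=3$ — have been absorbed into the cancellation $-\Delta_0^2\psi_\epsilon+Q_0(\psi_\epsilon)=0$, exactly as in the delicate step of the proof of Lemma~\ref{lem:Gamma}.

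It then remains to estimate the two bracketed quantities in $C^{0,\alpha}_{-1-2m}(A)$. For $-\Delta_0^2E$ I would bound each summand at its natural weight, using $\|\gamma_i\|_{C^{4,\alpha}_0}\leq c$: the term $\gamma_1\phi_6$ contributes $O(r_\epsilon^{2m+3})=O(\epsilon^{\alpha(2m+3)})$; the terms $\gamma_1\Gamma_1$ and $\gamma_2\Psi_1$ contribute $O(\epsilon^\kappa)$ directly, since the weight $3-2m$ drops to $-1-2m$ under $\Delta_0^2$; and $\gamma_1\epsilon^2\psi_{4-4m}(\epsilon^{-1}z)\in C^{k,\alpha}_{4-4m}(A)$, of size $\epsilon^{4m-2}$, contributes $O(r_\epsilon^{1-2m}\epsilon^{4m-2})=O(\epsilon^{(2m-1)(2m+3)/(2m+1)})$. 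Each of these is $O(\epsilon^\kappa)$ for some $\kappa>2m$ by the inequalities already invoked in Lemmas~\ref{lem:Gamma} and \ref{lem:Psi} (here $\alpha>\tfrac{2m}{2m+3}$ and $2m>3$), using $m\geq 3$ and $\alpha=\tfrac{2m-1}{2m+1}$.

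For the $Q_0$-difference I would use the flat-metric analogue of the estimate in Lemma~\ref{lem:Q}, namely $\|Q_0(\phi)-Q_0(\psi)\|_{C^{0,\alpha}_{\delta-4}(A)}\leq C(\|\phi\|_{C^{4,\alpha}_2(A)}+\|\psi\|_{C^{4,\alpha}_2(A)})\|\phi-\psi\|_{C^{4,\alpha}_\delta(A)}$, with $\phi=A_4+A_5+\psi_\epsilon+E$, $\psi=\psi_\epsilon$, taking each summand of $\phi-\psi=A_4+A_5+E$ at a suitable negative weight $\delta$ and noting that the prefactor $\|\phi\|_{C^{4,\alpha}_2(A)}+\|\psi_\epsilon\|_{C^{4,\alpha}_2(A)}$ is $o(1)$; combining with the weight-conversion factors on the single annulus $A$ gives a power of $\epsilon$ exceeding $2m$ in every case (the worst term needs $2m>5$, hence $m\geq 3$). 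The main obstacle is exactly the tightness of these last estimates when $m=3$: several of them are $O(\epsilon^\kappa)$ with $\kappa$ only slightly above $2m$, and they genuinely require both the $\epsilon^2\eta$-cancellation and the precise value $\alpha=\tfrac{2m-1}{2m+1}$ — the same feature that makes the proof of Lemma~\ref{lem:Gamma} delicate.
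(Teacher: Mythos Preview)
Your argument is correct and uses the same essential ingredients as the paper --- Lemma~\ref{lem:A4A5} for $\Delta_0^2(A_4+A_5)$, scalar-flatness of $\epsilon^2\eta$, and the quadratic estimate from Lemma~\ref{lem:Q} --- but you organize the computation differently. The paper expands $\mathbf{s}(\Omega_1)$ around the intermediate metric
\[
\omega_0=\ddbar\Big(\tfrac{|z|^2}{2}+d_0\epsilon^{2m-2}|z|^{4-2m}+d_1\epsilon^{2m}|z|^{2-2m}+d_2\epsilon^{4m-4}|z|^{6-4m}\Big),
\]
first estimating $\mathbf{s}(\omega_0)$ by writing $\epsilon^2\eta=\omega_0+\ddbar(\epsilon^2\psi_{4-4m})$, and then handling the cross term between the Burns--Simanca tail and $A_4$ via $(L_{\omega_0}+\Delta_0^2)(A_4+A_5)$. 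You instead expand around the flat metric, keep the full $\psi_\epsilon$ together so that $-\Delta_0^2\psi_\epsilon+Q_0(\psi_\epsilon)=0$ is used in one stroke, and the same cross term reappears as the $\psi_\epsilon$-contribution to the prefactor in $Q_0(\phi)-Q_0(\psi_\epsilon)$; your worst-case estimate $\epsilon^{2m-2}r_\epsilon^{2-2m}\cdot r_\epsilon^{2m+1}=\epsilon^{2m-2}r_\epsilon^{3}$ (requiring $2m>5$) is exactly the paper's bound for $(L_{\omega_0}+\Delta_0^2)A_4$. Your route is slightly more streamlined since the scalar-flatness cancellation is invoked once rather than split across $\mathbf{s}(\omega_0)$ and later estimates, at the cost of a marginally more delicate $Q_0$-difference; either way the borderline exponents and the need for $m\geqslant 3$ are identical.
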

\begin{proof}
We compute the scalar curvature of $\Omega_1$ as a
perturbation of the metric
\[ \omega_0 = \ddbar\Big(\frac{|z|^2}{2} + 
d_0\epsilon^{2m-2}|z|^{4-2m} + d_1\epsilon^{2m}|z|^{2-2m} +
  d_2\epsilon^{4m-4}|z|^{6-4m}\Big)\]
on the annulus $B_{2r_\epsilon}\setminus B_{r_\epsilon}$. Since 
\[ \mathbf{s}(\omega_0 +
\epsilon^2\ddbar\psi_{4-4m}(\epsilon^{-1}z))=\mathbf{s}(\epsilon^2\eta)=0, \]
we have
\[ \begin{aligned}
\mathbf{s}(\omega_0) &= -\epsilon^2
L_{\omega_0}(\psi_{4-4m}(\epsilon^{-1}z)) -
Q_{\omega_0}(\epsilon^2\psi_{4-4m} (\epsilon^{-1}z)) \\
&= \epsilon^2\Delta_0^2(\psi_{4-4m}(\epsilon^{-1}z)) + \text{lower
  order terms}. 
\end{aligned}\]
It follows that 
\[ \Vert \mathbf{s}(\omega_0)
\Vert_{C^{0,\alpha}_{-1-2m}(B_{2r_\epsilon}\setminus B_{r_\epsilon})}
= O(\epsilon^{4m-2}r_\epsilon^{1-2m}) = O(\epsilon^\kappa) \]
for some $\kappa > 2m$.  Then
\[ \begin{aligned}
\mathbf{s}(\Omega_1) &= \mathbf{s}(\omega_0) +
L_{\omega_0}(A_4 + A_5 + \gamma_1\phi_6 + \gamma_1\Gamma_1 +
\gamma_2\epsilon^2\psi_{4-4m}(\epsilon^{-1}z) + \gamma_2\Psi_1) \\
&\quad + Q_{\omega_0}(A_4 + A_5 + \gamma_1\phi_6 + \gamma_1\Gamma_1 +
\gamma_2\epsilon^2\psi_{4-4m}(\epsilon^{-1}z) + \gamma_2\Psi_1).
\end{aligned}\]
On the annulus $B_{2r_\epsilon}\setminus B_{r_\epsilon}$ we have
\[ A_4 + A_5 + \gamma_1\phi_6 + \gamma_1\Gamma_1 +
\gamma_2\epsilon^2\psi_{4-4m}(\epsilon^{-1}z) + \gamma_2\Psi_1 = A_4 +
\text{lower order terms}, \]
and also
\[ \begin{aligned}
\Vert \gamma_1\phi_6 + \gamma_1\Gamma_1 +
\gamma_2\epsilon^2\psi_{4-4m}(\epsilon^{-1}z) +
\gamma_2\Psi_1\Vert_{C^{4,\alpha}_{3-2m}} &= O(r_\epsilon^{2m+3} +
\epsilon^{4m-2}r_\epsilon^{1-2m} + \epsilon^\kappa)\\
&= O(\epsilon^\kappa),
\end{aligned}\]
for some $\kappa > 2m$,
and
\[ \Delta_0^2(A_4 + A_5) = -\mathbf{s}(p). \] 
It follows that 
\[\begin{aligned} \Vert\mathbf{s}(\Omega_1) -
\mathbf{s}(p)\Vert_{C^{0,\alpha}_{-1-2m}} &\leqslant \Vert
(L_{\omega_0} + \Delta^2_0)(A_4+A_5)\Vert_{C^{0,\alpha}_{-1-2m}} +
O(\epsilon^\kappa) \\ &\quad + C\Vert A_4\Vert_{C^{4,\alpha}_2}\Vert
A_4\Vert_{C^{4,\alpha}_{3-2m}} \\
&= O(\epsilon^\kappa),
\end{aligned} \] 
for some $\kappa > 2m$. 

\end{proof}

Let us write
$\Omega_1 = \omega_\epsilon + \ddbar u_1$.
From the results above, we find
\begin{lem}\label{lem:tildeomega}
We have
\begin{equation}\label{eq:Omega1est}
\Vert \mathbf{s}(\Omega_1) - \l_{\Omega_1}(\mathbf{s} +
h_1)\Vert_{C^{0,\alpha}_{-1-2m}} =
O(\epsilon^\kappa), 
\end{equation}
for some $\kappa > 2m$, and
\begin{equation}\label{eq:u1est}
 \Vert u_1\Vert_{C^{k,\alpha}_3} = O(\epsilon^\delta),
\end{equation}
for some $\delta > 0$. 
\end{lem}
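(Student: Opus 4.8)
The plan is to prove both estimates by working separately on the three pieces $M\setminus B_{2r_\epsilon}$, $B_{2r_\epsilon}\setminus B_{r_\epsilon}$ and $B_{r_\epsilon}$ of $\Bl_pM$ and then combining, using that the global weighted norms are comparable to the maxima of the norms over these pieces. Two facts are used throughout: by the explicit formula \eqref{eq:h1eq} one has $\Vert h_1\Vert_{C^{k,\alpha}(M)}=O(\epsilon^{2m-2})$ (in particular the constant $C$ in $h_1$ is $O(\epsilon^{2m-2})$, not $O(1)$); and, writing $\Omega_1=\omega_\epsilon+\ddbar u_1$, the function $u_1$ equals $\Gamma$ on $M\setminus B_{2r_\epsilon}$, equals $\Psi$ on $B_{r_\epsilon}$, and on the gluing annulus equals $\gamma_2(A_4+A_5)+\gamma_1\Gamma_1+\gamma_1\big(d_0\epsilon^{2m-2}|z|^{4-2m}+d_1\epsilon^{2m}|z|^{2-2m}+d_2\epsilon^{4m-4}|z|^{6-4m}\big)+\gamma_2\Psi_1$, the last expression obtained by subtracting the K\"ahler potential of $\omega_\epsilon$ from that of $\Omega_1$.

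For \eqref{eq:Omega1est}, on $M\setminus B_{2r_\epsilon}$ we have $\omega_\epsilon=\omega$ and $\l(g)=g$ for all $g\in\overline{\mathfrak{h}}$ (the holomorphic lift of $X_g$ with $g\in\overline{\mathfrak{t}}$ is $X_g$ itself, and $\l(g)=\gamma_1 g=g$ for $g\in\mathfrak{h}'$), so since $u_1=\Gamma$, unwinding $\l_{\Omega_1}(\mathbf{s}+h_1)=(\mathbf{s}+h_1)+\tfrac12\nabla\Gamma\cdot\nabla(\mathbf{s}+h_1)$ and using $\mathbf{s}(\omega)=\mathbf{s}$ together with \eqref{eq:annulusest} yields $\mathbf{s}(\Omega_1)-\l_{\Omega_1}(\mathbf{s}+h_1)=-\tfrac12\nabla\Gamma\cdot\nabla h_1+O(\epsilon^\kappa)$; since $\nabla h_1=O(\epsilon^{2m-2})$ and $\Vert\Gamma\Vert_{C^{k,\alpha}_{4-2m}(M\setminus B_{r_\epsilon})}=O(\epsilon^{2m-2})$ (the more singular terms $d_1\epsilon^{2m}G_2$ and $d_2\epsilon^{4m-4}G_3$ carrying compensating coefficients), this is $O(\epsilon^{4m-4})$ in $C^{0,\alpha}_{-1-2m}$, and $4m-4>2m$ for $m>2$. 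On $B_{r_\epsilon}$, Lemma~\ref{lem:Psi} gives $\mathbf{s}(\Omega_1)=\mathbf{s}(p)+O(\epsilon^\kappa)$, while with $u_1=\Psi$ one writes $\l_{\Omega_1}(\mathbf{s}+h_1)-\mathbf{s}(p)=\big(\l(\mathbf{s})-\mathbf{s}(p)\big)+\l(h_1)+\tfrac12\nabla\Psi\cdot\nabla(\mathbf{s}+h_1)$; here $\l(\mathbf{s})-\mathbf{s}(p)=\l(\mathbf{s}-\mathbf{s}(p))$ lies in $C^{k,\alpha}_2$ with $O(1)$ norm by the proof of Lemma~\ref{lem:lb}, hence has $C^{0,\alpha}_{-1-2m}(B_{r_\epsilon})$ norm $O(r_\epsilon^{3+2m})$, the term $\l(h_1)=O(\epsilon^{2m-2})$ contributes $O(r_\epsilon^{1+2m}\epsilon^{2m-2})=O(\epsilon^{4m-3})$, and the gradient term is of still higher order. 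With $\alpha=\tfrac{2m-1}{2m+1}$ one has $\alpha(3+2m)=\tfrac{(2m-1)(2m+3)}{2m+1}>2m$ for $m>2$ and $\alpha(1+2m)=2m-1$, so all exponents exceed $2m$. The annulus is treated identically, using Lemma~\ref{lem:annularscal} for $\mathbf{s}(\Omega_1)=\mathbf{s}(p)+O(\epsilon^\kappa)$ and the bounds $\Vert\Gamma_1\Vert_{C^{k,\alpha}_{3-2m}},\Vert\Psi_1\Vert_{C^{k,\alpha}_{3-2m}}=O(\epsilon^\kappa)$ of Lemmas~\ref{lem:Gamma} and \ref{lem:Psi}. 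Taking the minimum of the resulting exponents over the three pieces gives \eqref{eq:Omega1est}.

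For \eqref{eq:u1est} I would estimate $\Vert u_1\Vert_{C^{k,\alpha}_3}$ term by term on each piece, using $\Vert\gamma_i\Vert_{C^{4,\alpha}_0}\leqslant c$. The smooth terms $A_4,A_5$ and the small corrections $\Gamma_1,\Psi_1,\gamma_1\phi_6,\gamma_2\epsilon^2\psi_{4-4m}(\epsilon^{-1}z)$ all contribute $O(\epsilon^\delta)$ with $\delta>0$ at once; the Green's-function-type terms must be estimated individually in their respective weighted spaces, since bounding all of $\Gamma$ by its most singular term overestimates the smaller terms and destroys the bound. On $M\setminus B_{r_\epsilon}$ the dominant one, $d_0\epsilon^{2m-2}|z|^{4-2m}$, has $C^{k,\alpha}_3$ norm $O(\epsilon^{2m-2}r_\epsilon^{1-2m})=O(\epsilon^{(2m-3)/(2m+1)})$, while $d_1\epsilon^{2m}|z|^{2-2m}$ contributes $O(\epsilon)$ and $d_2\epsilon^{4m-4}|z|^{6-4m}$ contributes $O(\epsilon^{(6m-7)/(2m+1)})$, both with positive exponent; hence \eqref{eq:u1est} holds with $\delta=\tfrac{2m-3}{2m+1}$.

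The main obstacle is bookkeeping with the weighted norms: the comparison inequality between two different weights never improves a bound, so all the smallness has to come from exploiting the explicit form of the terms — a quantity genuinely of size $O(|z|^a)$ near $p$ has $C^{0,\alpha}_\delta(B_{r_\epsilon})$ norm $O(r_\epsilon^{a-\delta})$ when $a>\delta$, and an $O(\epsilon^b)$ term picks up a factor $O(r_\epsilon^{-\delta})$. One must therefore track how the $\epsilon$-powers in $d_0\epsilon^{2m-2}$, $d_1\epsilon^{2m}$, $d_2\epsilon^{4m-4}$ interact with the powers of $r_\epsilon=\epsilon^\alpha$ and verify in each instance that the net exponent is $>2m$ for \eqref{eq:Omega1est} and $>0$ for \eqref{eq:u1est}; the choice $\alpha=\tfrac{2m-1}{2m+1}$ is made precisely so that this works out for all $m>2$. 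The second point one must not overlook is that the constant $C$ in $h_1$ is genuinely of order $\epsilon^{2m-2}$: a nonzero fixed constant already has $C^{0,\alpha}_{-1-2m}(B_{r_\epsilon})$ norm $O(r_\epsilon^{1+2m})=O(\epsilon^{2m-1})$, too large, so it is essential that $h_1$ is small throughout, as recorded in \eqref{eq:h1eq}.
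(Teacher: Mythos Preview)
Your argument is correct and follows essentially the same route as the paper. The paper organizes the computation slightly more economically: it treats $M\setminus B_{2r_\epsilon}$ and $B_{2r_\epsilon}$ as two regions (the annulus having already been handled by Lemma~\ref{lem:annularscal}), and on the inner region it bounds $\l_{\Omega_1}(h_1)$ and $\l_{\Omega_1}(\mathbf{s}-\mathbf{s}(p))$ directly via $\Vert\l_{\Omega_1}(h_1)\Vert_{C^{0,\alpha}_{-1-2m}}\leqslant Cr_\epsilon^{2m+1}\Vert\l_{\Omega_1}(h_1)\Vert_{C^{0,\alpha}_0}$ and $\Vert\l_{\Omega_1}(\mathbf{s}-\mathbf{s}(p))\Vert_{C^{0,\alpha}_{-1-2m}}\leqslant Cr_\epsilon^{2m+3}\Vert\l_{\Omega_1}(\mathbf{s}-\mathbf{s}(p))\Vert_{C^{0,\alpha}_2}$, rather than splitting off the gradient correction $\tfrac12\nabla u_1\cdot\nabla\l(f)$ as you do. Your explicit exponent computations (in particular $\delta=(2m-3)/(2m+1)$ for \eqref{eq:u1est}) are a nice addition; the paper simply observes that the leading terms are $\epsilon^{2m-2}|z|^{4-2m}$ outside and $|z|^4$ inside and leaves the verification to the reader. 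One small point: your assertion that ``the gradient term is of still higher order'' on $B_{r_\epsilon}$ should be accompanied by the one-line estimate (it follows from $\Vert\nabla\l(\mathbf{s}+h_1)\Vert_{C^{0,\alpha}_0}\leqslant C$ and $\Vert u_1\Vert_{C^{k,\alpha}_3}=O(\epsilon^\delta)$), but this is routine.
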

\begin{proof}
We work on the regions $M\setminus B_{2r_\epsilon}$ and $B_{2\epsilon}$ 
separately. On $M\setminus B_{2r_\epsilon}$ we have $u_1
= \Gamma$, $\omega_\epsilon = \omega$ and $\l(h_1) = h_1$, so the result
follows from Lemma~\ref{lem:Gamma}, together with 
\[ \Vert \l_{\Omega_1}(h_1) - h_1\Vert_{C^{0,\alpha}_{-1-2m}} =
\left\Vert \frac{1}{2}\nabla\Gamma\cdot\nabla h_1
\right\Vert_{C^{0,\alpha}_{-1-2m}} \leqslant C\epsilon^{4m-4} =
O(\epsilon^\kappa), \]
for $\kappa > 2m$, since $m>2$. 

On $B_{2r_\epsilon}$ we have
\[ \Vert \l_{\Omega_1}(h_1) \Vert_{C^{0,\alpha}_{-1-2m}} \leqslant
Cr_\epsilon^{2m+1} \Vert \l_{\Omega_1}(h_1)\Vert_{C^{0,\alpha}_0}
\leqslant Cr_\epsilon^{2m+1} \epsilon^{2m-2} = O(\epsilon^\kappa)\]
for some $\kappa > 2m$, since $\alpha > \frac{2}{2m+1}$. In addition
\[ \Vert \l_{\Omega_1}(\mathbf{s} -
\mathbf{s}(p))\Vert_{C^{0,\alpha}_{-1-2m}} \leqslant Cr_\epsilon^{2m+3}\Vert
\l_{\Omega_1}(\mathbf{s}- \mathbf{s}(p))\Vert_{C^{0,\alpha}_2}
\leqslant Cr_\epsilon^{2m+3} = O(\epsilon^\kappa), \]
for some $\kappa > 2m$. The bound \eqref{eq:Omega1est} then follows from Lemmas
\ref{lem:Psi} and \ref{lem:annularscal}.

As for \eqref{eq:u1est}, note that outside $B_{2r_\epsilon}$ the
leading term in $u_1$ is of order $\epsilon^{2m-2}|z|^{4-2m}$,
while inside $B_{2r_\epsilon}$ the leading term is of order
$|z|^4$. The bound \eqref{eq:u1est} is then easy to check. 
\end{proof}

\subsection{The approximate solution $\Omega_2$}
We need to modify $\Omega_1$ once more to
obtain a metric $\Omega_2 = \Omega_1 + \ddbar u_2$. For this we
want to find $(u_2, h_2)$ solving
\begin{equation}\label{eq:u2}
-\mathcal{D}^*_{\Omega_1}\mathcal{D}_{\Omega_1} u_2 -
\l_{\Omega_1}(h_2) = \l_{\Omega_1}(\mathbf{s} + h_1) -
\mathbf{s}(\Omega_1). 
\end{equation}
We will use the inverse operator from Proposition~\ref{prop:inverse2},
so in addition to the estimate from Lemma~\ref{lem:tildeomega}, we
need to bound the integral of $\l_{\Omega_1}(\mathbf{s} + h_1) -
\mathbf{s}(\Omega_1)$. 
For this we have the following.
\begin{lem} We have
\[\int_{\Bl_pM} \l_{\Omega_1}(\mathbf{s} + h_1) -
\mathbf{s}(\Omega_1)\,
\frac{\Omega_1^m}{m!} = 
O(\epsilon^\kappa), \]
for some $\kappa > 2m$. 
\end{lem}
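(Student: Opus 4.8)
The plan is to evaluate the two integrals separately, each up to an error of order $\epsilon^\kappa$ with $\kappa>2m$, and to observe that the leading terms cancel because of the precise form of $h_1$ constructed in Lemma~\ref{lem:Gamma}. First, $\int_{\Bl_pM}\mathbf{s}(\Omega_1)\,\Omega_1^m/m!$ is a topological invariant of the class $[\Omega_1]=\pi^*[\omega]-\epsilon^2[E]$: in the normalization of this paper it equals $2\pi\,c_1(\Bl_pM)\cdot[\Omega_1]^{m-1}/(m-1)!$. Using $c_1(\Bl_pM)=\pi^*c_1(M)-(m-1)[E]$ together with $\pi^*\alpha\cdot[E]^j=0$ for any positive-degree class $\alpha$ and $1\le j\le m-1$, and $[E]^m=(-1)^{m-1}$, only a single correction term survives and one gets
\[ \int_{\Bl_pM}\mathbf{s}(\Omega_1)\,\frac{\Omega_1^m}{m!} = \int_M \mathbf{s}(\omega)\,\frac{\omega^m}{m!} - \frac{2\pi}{(m-2)!}\,\epsilon^{2m-2}. \]

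Next I would analyse $\int_{\Bl_pM}\l_{\Omega_1}(\mathbf{s}+h_1)\,\Omega_1^m/m!$. Decompose $\mathbf{s}+h_1=g_\mathfrak{t}+g'$ along $\overline{\mathfrak{h}}=\overline{\mathfrak{t}}\oplus\mathfrak{h}'$. Since $\mathbf{s}$ and the constant part of $h_1$ lie in $\overline{\mathfrak{t}}$ while the non-constant terms of $h_1$ are $O(\epsilon^{2m-2})$, we have $|g'|=O(\epsilon^{2m-2})$ and $g_\mathfrak{t}(p)=(\mathbf{s}+h_1)(p)$. For the $\overline{\mathfrak{t}}$-part, Lemma~\ref{lem:intl} gives $\int_{\Bl_pM}\l_{\Omega_1}(g_\mathfrak{t})\,\Omega_1^m=\int_{\Bl_pM}\l(g_\mathfrak{t})\,\omega_\epsilon^m$; since $\l(g_\mathfrak{t})=g_\mathfrak{t}$ and $\omega_\epsilon=\omega$ outside $B_{2r_\epsilon}$, the difference from $\int_M g_\mathfrak{t}\,\omega^m$ is localized in $B_{2r_\epsilon}$. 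Writing $\l(g_\mathfrak{t})=\pi^*g_\mathfrak{t}+(\l(g_\mathfrak{t})-\pi^*g_\mathfrak{t})$ and $\omega_\epsilon^m=\pi^*\omega^m+(\omega_\epsilon^m-\pi^*\omega^m)$, the cross terms are $O(\epsilon^\kappa)$ — here one uses $\l(g_\mathfrak{t})-\pi^*g_\mathfrak{t}=O(|z|^4+\epsilon^{2m-2}|z|^{4-2m})$ on $B_{2r_\epsilon}\setminus B_\epsilon$ and $O(\epsilon^2)$ on $B_\epsilon$, which follows from $d(\l(g_\mathfrak{t})-\pi^*g_\mathfrak{t})=\iota_{X_{g_\mathfrak{t}}}(\omega_\epsilon-\pi^*\omega)$ — while
\[ \int_{B_{2r_\epsilon}}\pi^*g_\mathfrak{t}\,\frac{\omega_\epsilon^m-\pi^*\omega^m}{m!}=g_\mathfrak{t}(p)\Bigl(\int_{\Bl_pM}\frac{\omega_\epsilon^m}{m!}-\int_M\frac{\omega^m}{m!}\Bigr)+O(\epsilon^\kappa)=-g_\mathfrak{t}(p)\,\frac{\epsilon^{2m}}{m!}+O(\epsilon^\kappa). \]
For the $\mathfrak{h}'$-part one argues directly, region by region: $\l_{\Omega_1}(g')=\gamma_1 g'+\tfrac12\nabla u_1\cdot\nabla g'$, and using $|g'|\le C\epsilon^{2m-2}|z|$ with the bounds on $\Gamma$ and $u_1$ from Lemmas~\ref{lem:Gamma} and \ref{lem:tildeomega} on the three regions $M\setminus B_{2r_\epsilon}$, $B_{2r_\epsilon}\setminus B_{r_\epsilon}$ and $B_{r_\epsilon}$, one obtains $\int_{\Bl_pM}\l_{\Omega_1}(g')\,\Omega_1^m/m!=\int_M g'\,\omega^m/m!+O(\epsilon^{4m-4})$, and $4m-4>2m$ since $m>2$; note the cruder estimate \eqref{eq:intlOmega} is not sharp enough here. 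Adding the two pieces,
\[ \int_{\Bl_pM}\l_{\Omega_1}(\mathbf{s}+h_1)\,\frac{\Omega_1^m}{m!}=\int_M(\mathbf{s}+h_1)\,\frac{\omega^m}{m!}-(\mathbf{s}+h_1)(p)\,\frac{\epsilon^{2m}}{m!}+O(\epsilon^\kappa). \]

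Finally I would combine these. Since $\mu(p)$ and $\Delta\mu(p)$ have zero mean on $(M,\omega)$, only the constant part of $h_1$ contributes to $\int_M h_1\,\omega^m/m!$, and by \eqref{eq:h1eq} this gives $\int_M h_1\,\omega^m/m!=-\tfrac{2\pi}{(m-2)!}\epsilon^{2m-2}+\tfrac{\mathbf{s}(p)}{m!}\epsilon^{2m}$. Also $(\mathbf{s}+h_1)(p)=\mathbf{s}(p)+O(\epsilon^{2m-2})$, so $(\mathbf{s}+h_1)(p)\,\epsilon^{2m}/m!=\tfrac{\mathbf{s}(p)}{m!}\epsilon^{2m}+O(\epsilon^{4m-2})$. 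Substituting, the two $\epsilon^{2m}$ terms cancel and
\[ \int_{\Bl_pM}\l_{\Omega_1}(\mathbf{s}+h_1)\,\frac{\Omega_1^m}{m!}=\int_M\mathbf{s}(\omega)\,\frac{\omega^m}{m!}-\frac{2\pi}{(m-2)!}\,\epsilon^{2m-2}+O(\epsilon^\kappa), \]
which agrees with $\int_{\Bl_pM}\mathbf{s}(\Omega_1)\,\Omega_1^m/m!$ up to $O(\epsilon^\kappa)$, proving the lemma.

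The main obstacle is the local analysis in the middle step: isolating the $O(\epsilon^{2m})$ blow-up correction $-(\mathbf{s}+h_1)(p)\,\epsilon^{2m}/m!$ and checking that every remaining contribution from the neck $B_{2r_\epsilon}\setminus B_{r_\epsilon}$ and from the rescaled Burns--Simanca region $B_{r_\epsilon}$ is genuinely of order $\epsilon^\kappa$ with $\kappa>2m$ — in particular that the $\mathfrak{h}'$-part requires a region-by-region estimate rather than \eqref{eq:intlOmega}. The cancellation then works precisely because the constant $\tfrac{\epsilon^{2m}\mathbf{s}(p)}{m!}(V^{-1}+\mu(p))$ that was built into $h_1$ in Lemma~\ref{lem:Gamma} is exactly what is needed to offset this blow-up correction.
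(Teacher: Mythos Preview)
Your proof is correct and follows essentially the same strategy as the paper's: reduce the $\Omega_1$-integrals to $\omega_\epsilon$-integrals via Lemma~\ref{lem:intl} and a region-by-region estimate, relate these to integrals on $M$, and then observe that the formula \eqref{eq:h1eq} for $h_1$ was engineered precisely so that the leading terms cancel.

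The organization differs slightly. You decompose $\mathbf{s}+h_1$ along $\overline{\mathfrak{t}}\oplus\mathfrak{h}'$ and treat the two pieces separately, rederiving the formula $\int_{\Bl_pM}\l(g_{\mathfrak{t}})\,\omega_\epsilon^m=\int_M g_{\mathfrak{t}}\,\omega^m-\epsilon^{2m}g_{\mathfrak{t}}(p)+O(\epsilon^\kappa)$ by hand. The paper instead splits into $\mathbf{s}$ and $h_1$: for $\mathbf{s}\in\overline{\mathfrak{t}}$ it invokes the exact identities of Proposition~\ref{prop:Blformulas} (a forward reference, proved later by reduction to $\mathbf{P}^m$), and for all of $h_1$ it does the region-by-region estimate you do for $g'$, exploiting $|h_1|=O(\epsilon^{2m-2})$ throughout. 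Your version has the minor advantage of being self-contained; the paper's has the advantage of citing exact formulas rather than re-estimating. Both hinge on the same observation you flag at the end: that the $\tfrac{\epsilon^{2m}\mathbf{s}(p)}{m!}$ constant built into $h_1$ exactly cancels the $\epsilon^{2m}$ blow-up correction.
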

\begin{proof}
First note that from Lemma~\ref{lem:intl} we have
\[ \int_{\Bl_pM} \l_{\Omega_1}(\mathbf{s}) -
\mathbf{s}(\Omega_1)\,
\frac{\Omega_1^m}{m!} = 
\int_{\Bl_pM} \l(\mathbf{s}) - \mathbf{s}(\omega_\epsilon)\,
\frac{\omega_\epsilon^m}{m!}, \]
since $\mathbf{s}\in\overline{\mathfrak{t}}$ and the total scalar
curvature is an invariant of the K\"ahler class. 

We have
\begin{equation}\label{eq:inth1Omega1}
  \begin{aligned}
 \int_{\Bl_pM} \l_{\Omega_1}(h_1)\,\Omega_1^m - \int_{\Bl_pM}
\l(h_1)\,\omega_\epsilon^m &= \int_{\Bl_pM}
\frac{1}{2}\nabla\l(h_1)\cdot\nabla u_1\,\Omega_1^m + \\
&\qquad+ \int_{\Bl_pM}
\l(h_1)\,(\Omega_1^m - \omega_\epsilon^m).
\end{aligned}
\end{equation}
On $M\setminus B_1$, we can bound this by
$|h_1|\,\Vert u_1\Vert_{C^{4,\alpha}_2}$, as in
Equation~\eqref{eq:intlOmega}.  We have
\[ |h_1|\,\Vert u_1\Vert_{C^{4,\alpha}(M\setminus B_1)}\leqslant
C\epsilon^{2m-2}\epsilon^{2m-2} = O(\epsilon^\kappa), \]
for some $\kappa > 2m$ if $m > 2$. To bound \eqref{eq:inth1Omega1} on
$B_1\setminus B_{2r_\epsilon}$, note that
\[ \Vert u_1\Vert_{C^{4,\alpha}_{4-2m}(M\setminus B_{2r_\epsilon})} =
O(\epsilon^{2m-2}), \]
and $|\l(h_1)| + |\nabla\l(h_1)| \leqslant C\epsilon^{2m-2}$ from
Lemma~\ref{lem:lb}.
It follows that
\[ \Vert\l(h_1)\,
(\Omega_1^m-\omega_\epsilon^m)\Vert_{C^0_{4-2m}(B_1\setminus
  B_{2r_\epsilon})} \leqslant C\epsilon^{4m-4}. \]
The integral on $B_1\setminus B_{2r_\epsilon}$ is then
bounded by
\[ \int_{2r_\epsilon}^1 \epsilon^{4m-4}r^{2-2m}r^{2m-1}\,dr \leqslant
C\epsilon^{4m-4}.\]
Similarly
\[ \Vert \nabla\l(h_1)\cdot\nabla u_1\Vert_{C^0_{3-2m}}\leqslant
C\epsilon^{4m-4}, \]
so the other term in \eqref{eq:inth1Omega1} is also bounded by
$\epsilon^{4m-4}$. Since $m > 2$, we have $\epsilon^{4m-4} =
O(\epsilon^\kappa)$ for some $\kappa > 2m$.

The lift $\l(h_1)$ equals $h_1$ outside $B_{2r_\epsilon}$, and the
volume of $B_{2r_\epsilon}$ is of order $r_\epsilon^{2m}$, so we have
\[ \int_{\Bl_pM} \l(h_1)\,\frac{\omega_\epsilon^m}{m!} = \int_{M}
h_1\,\frac{\omega^m}{m!} + O(\epsilon^{2m-2}r_\epsilon^{2m}) = \int_M
h_1\,\frac{\omega^m}{m!} + O(\epsilon^\kappa),\]
for some $\kappa > 2m$.

Since $\mathbf{s}\in\mathfrak{t}$, we can relate the integrals on
$\Bl_pM$ and on $M$ using the formulas in
Proposition~\ref{prop:Blformulas}. We have
\[ 
\int_{\Bl_pM} \l(\mathbf{s})\,\frac{\omega_\epsilon^m}{m!} =
\int_M \mathbf{s}\, \frac{\omega^m}{m!} -
\frac{\epsilon^{2m}}{m!}\mathbf{s}(p) + O(\epsilon^\kappa),
 \]
for some $\kappa > 2m$, and also
\[ \int_{\Bl_pM}
\mathbf{s}(\omega_\epsilon)\,\frac{\omega_\epsilon^m}{ m!} = \int_M
\mathbf{s}\,\frac{\omega^m}{m!} - \frac{2\pi\epsilon^{2m-2}}{(m-2)!}. \]

Combining these formulas, we have
\[ \int_{\Bl_pM} \l(\mathbf{s} + h_1) - \mathbf{s}(\omega_\epsilon)\,
\frac{\omega_\epsilon^m}{m!} = \int_M h_1\,\frac{\omega^m}{m!} +
\frac{2\pi\epsilon^{2m-2}}{(m-2)!} - \frac{\epsilon^{2m}}{m!}\mathbf{s}(p) +
O(\epsilon^\kappa). \]

From the formula for $h_1$ in Lemma~\ref{lem:Gamma}, the first 3 terms
on the right cancel, leaving only $O(\epsilon^\kappa)$. 
\end{proof}

Letting $C$ be the average of $\l_{\Omega_1}(\mathbf{s}+h_1) -
\mathbf{s}(\Omega_1)$ with respect to $\Omega_1$, we can apply
Proposition~\ref{prop:inverse2} to find $u_2$ and
$h_2'\in\overline{\mathfrak{h}}_0$ such that
\[ -\mathcal{D}^*_{\Omega_1}\mathcal{D}_{\Omega_1} u_2 -
\l_{\Omega_1}(h_2') = \l_{\Omega_1}(\mathbf{s} + h_1) -
\mathbf{s}(\Omega_1) - C.\]
Letting $h_2 = h_2' - C$, this implies
that we can solve \eqref{eq:u2}, with
\begin{equation}\label{eq:u2h2}
 \Vert u_2\Vert_{C^{4,\alpha}_{3-2m}} + |h_2| =
O(\epsilon^\kappa), 
\end{equation}
with $\kappa > 2m$. 
We now let $\Omega_2 = \Omega_1 + \ddbar u_2$. This satisfies the
following.
\begin{lem}\label{lem:Omega2}
\[\Vert \mathbf{s}(\Omega_2) - \l_{\Omega_2}(\mathbf{s} +
h_1+h_2)\Vert_{C^{0,\alpha}_{-2m}} = O(\epsilon^\kappa), \]
for some $\kappa > 2m$. 
\end{lem}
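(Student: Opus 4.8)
The plan is to expand the scalar curvature of $\Omega_2 = \Omega_1 + \ddbar u_2$ around $\Omega_1$ using \eqref{eq:scal}, and then use the defining equation \eqref{eq:u2} for $(u_2,h_2)$ to cancel the main terms. Concretely, we write
\[ \mathbf{s}(\Omega_2) = \mathbf{s}(\Omega_1) + L_{\Omega_1}(u_2) + Q_{\Omega_1}(u_2), \]
and we rewrite $L_{\Omega_1}(u_2)$ in terms of $\mathcal{D}^*_{\Omega_1}\mathcal{D}_{\Omega_1} u_2$ using \eqref{eq:Lichn}, picking up an extra term $\tfrac{1}{2}\nabla\mathbf{s}(\Omega_1)\cdot\nabla u_2$. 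Equation \eqref{eq:u2} then says
\[ -\mathcal{D}^*_{\Omega_1}\mathcal{D}_{\Omega_1} u_2 = \l_{\Omega_1}(\mathbf{s}+h_1) - \mathbf{s}(\Omega_1) + \l_{\Omega_1}(h_2), \]
so substituting this in, $\mathbf{s}(\Omega_1)$ cancels and we are left with
\[ \mathbf{s}(\Omega_2) = \l_{\Omega_1}(\mathbf{s}+h_1+h_2) + \frac{1}{2}\nabla\mathbf{s}(\Omega_1)\cdot\nabla u_2 + Q_{\Omega_1}(u_2). \]
The remaining work is to show that replacing $\l_{\Omega_1}$ by $\l_{\Omega_2}$ and discarding the gradient and $Q$ terms each costs only $O(\epsilon^\kappa)$ in $C^{0,\alpha}_{-2m}$.

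The three error terms are handled as follows. First, $\l_{\Omega_1}(g) - \l_{\Omega_2}(g) = -\tfrac{1}{2}\nabla u_2\cdot\nabla g$ for $g\in\overline{\mathfrak{t}}$ (and more generally, using the definition of $\l_\Omega$, the difference is controlled by $\Vert u_2\Vert_{C^{4,\alpha}_2}\,|g|$ as in Remark~\ref{rem:G}); since $\l(\mathbf{s}+h_1+h_2)$ and its gradient are bounded by $C\epsilon^{2m-2}$ via Lemma~\ref{lem:lb} and Lemma~\ref{lem:Gamma}, and $\Vert u_2\Vert_{C^{4,\alpha}_{3-2m}} = O(\epsilon^\kappa)$ by \eqref{eq:u2h2} — which also gives $\Vert u_2\Vert_{C^{4,\alpha}_2}\leqslant\epsilon^{(3-2m)-2}\cdot O(\epsilon^\kappa)$, needing $\kappa$ large enough — the product is $O(\epsilon^{\kappa'})$ for some $\kappa' > 2m$. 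Here one has to be slightly careful to track the weight: on $B_{2r_\epsilon}$ one picks up a favourable power of $r_\epsilon$ when passing from weight $0$ to weight $-2m$, exactly as in the proof of Lemma~\ref{lem:tildeomega}. Second, $\tfrac{1}{2}\nabla\mathbf{s}(\Omega_1)\cdot\nabla u_2$: using that $\mathbf{s}(\Omega_1)$ is close to $\l_{\Omega_1}(\mathbf{s}+h_1)$ in $C^{0,\alpha}_{-1-2m}$ by \eqref{eq:Omega1est} (so $\nabla\mathbf{s}(\Omega_1)$ is controlled, the dangerous part being $\nabla\l(\mathbf{s})$ which is bounded) together with the weighted bound on $\nabla u_2$ from \eqref{eq:u2h2}, this term is $O(\epsilon^\kappa)$ in $C^{0,\alpha}_{-2m}$.

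The third term, $Q_{\Omega_1}(u_2)$, is estimated by Lemma~\ref{lem:Q} (with $\psi=0$, so $Q_{\Omega_1}(u_2) = Q_{\Omega_1}(u_2) - Q_{\Omega_1}(0)$), which gives
\[ \Vert Q_{\Omega_1}(u_2)\Vert_{C^{0,\alpha}_{\delta-4}} \leqslant C\Vert u_2\Vert_{C^{4,\alpha}_2}\,\Vert u_2\Vert_{C^{4,\alpha}_\delta}; \]
choosing $\delta = 4-2m$ the target weight becomes $-2m$, and both factors are $O(\epsilon^{\kappa''})$ by \eqref{eq:u2h2} and the comparison of weighted norms, so this is $O(\epsilon^\kappa)$ for some $\kappa > 2m$. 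Strictly one should note $Q_{\Omega_1}$ rather than $Q_{\omega_\epsilon}$ appears, but since $\Omega_1 = \omega_\epsilon + \ddbar u_1$ with $\Vert u_1\Vert_{C^{4,\alpha}_3}$ small by \eqref{eq:u1est}, Lemma~\ref{lem:Q} applies verbatim to $\Omega_1$ (the constants only depend on uniform equivalence of the metrics). Combining the three bounds and the identity for $\mathbf{s}(\Omega_2)$ above gives the claim. The main obstacle is purely bookkeeping: making sure that the chosen $\kappa$ in \eqref{eq:u2h2}, which is fed by the estimate \eqref{eq:Omega1est} through Proposition~\ref{prop:inverse2}, is large enough that after the loss incurred in passing from weight $3-2m$ to weight $2$ (a factor $\epsilon^{(3-2m)-2} = \epsilon^{1-2m}$) the products above are still $O(\epsilon^{\kappa})$ with $\kappa > 2m$; this is where the precise value $\alpha = \tfrac{2m-1}{2m+1}$ and the margin in the earlier lemmas is used, and it is the one place where the estimate could fail if the earlier terms had not been computed sharply enough.
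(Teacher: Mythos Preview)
Your strategy is essentially the paper's: expand $\mathbf{s}(\Omega_2)$, convert $L_{\Omega_1}$ to $\mathcal{D}^*_{\Omega_1}\mathcal{D}_{\Omega_1}$ via \eqref{eq:Lichn}, substitute \eqref{eq:u2}, and estimate the remaining gradient and $Q$ terms. The paper differs only in that it combines your first two error terms, writing
\[
\mathbf{s}(\Omega_2) - \l_{\Omega_2}(\mathbf{s}+h_1+h_2) = \tfrac{1}{2}\big(\nabla\mathbf{s}(\Omega_1)\cdot_{\Omega_1}\nabla u_2 - \nabla\l(\mathbf{s}+h_1+h_2)\cdot\nabla u_2\big) + Q_{\Omega_1}(u_2),
\]
and bounds the bracket directly by $C\Vert u_2\Vert_{C^{4,\alpha}_{2-2m}}$; but treating the two gradient terms separately, as you do, also works.

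There are, however, two slips. First, you assert that $\l(\mathbf{s}+h_1+h_2)$ and its gradient are $O(\epsilon^{2m-2})$; this is false, since $\mathbf{s}$ itself is $O(1)$. Fortunately the argument survives: from Lemma~\ref{lem:lb} one has $\Vert\nabla\l(\mathbf{s}+h_1+h_2)\Vert_{C^{0,\alpha}_0}\leqslant C$, so the product with $\nabla u_2$ lies in $C^{0,\alpha}_{2-2m}$ with norm $O(\epsilon^\kappa)$, and passing to the smaller weight $-2m$ is free. Second, and more seriously, you invoke \eqref{eq:Omega1est} to control $\nabla\mathbf{s}(\Omega_1)$, but that estimate is only in $C^{0,\alpha}_{-1-2m}$ and says nothing about the gradient. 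The paper instead bounds $\Vert\mathbf{s}(\Omega_1)-\mathbf{s}(\omega_\epsilon)\Vert_{C^{1,\alpha}_{-1}}$ directly via $\Vert L_{\omega'}(u_1)\Vert_{C^{1,\alpha}_{-1}}\leqslant C\Vert u_1\Vert_{C^{5,\alpha}_3}$ and \eqref{eq:u1est}; you need some such argument here. Your closing remark about the role of $\alpha=\tfrac{2m-1}{2m+1}$ is also misplaced: that choice was used in constructing $\Omega_1$, not in the present lemma, where the arithmetic $2\kappa - 2m > 2m$ (from $\kappa>2m$) is all that is needed for the $Q$ term.
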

\begin{proof}
First we have
\[ \begin{aligned}
\mathbf{s}(\Omega_2) &= \mathbf{s}(\Omega_1) + L_{\Omega_1}(u_2) +
Q_{\Omega_1}(u_2) \\ 
&= \mathbf{s}(\Omega_1) -
\mathcal{D}^*_{\Omega_1}\mathcal{D}_{\Omega_1}(u_2) +
\frac{1}{2}\nabla\mathbf{s}(\Omega_1)\cdot_{\Omega_1} \nabla u_2 +
Q_{\Omega_1}(u_2) \\
&= \l_{\Omega_1}(\mathbf{s} + h_1 + h_2) + \frac{1}{2}
\nabla\mathbf{s}(\Omega_1)\cdot_{\Omega_1} \nabla u_2 + 
Q_{\Omega_1}(u_2) \\
&= \l_{\Omega_2}(\mathbf{s}+h_1+h_2) - \frac{1}{2}\nabla\l(\mathbf{s}
+ h_1+h_2)\cdot\nabla u_2 \\
&\qquad + \frac{1}{2}
\nabla\mathbf{s}(\Omega_1)\cdot_{\Omega_1} \nabla u_2 + 
Q_{\Omega_1}(u_2),
\end{aligned}\]
where by $\cdot_{\Omega_1}$ we indicate that the gradients and inner
products are taken with respect to $\Omega_1$ instead of
$\omega_\epsilon$.  We need to estimate $\nabla\mathbf{s}(\Omega_1)$,
and for this we have
\[ \Vert \mathbf{s}(\Omega_1) -
\mathbf{s}(\omega_\epsilon)\Vert_{C^{1,\alpha}_{-1}} = \Vert
L_{\omega'}(u_1)\Vert_{C^{1,\alpha}_{-1}} \leqslant C\Vert
u_1\Vert_{C^{5,\alpha}_3} \leqslant C, \]
where $\omega' = \omega_\epsilon + t\ddbar u_1$ for some
$t\in[0,1]$, where we used \eqref{eq:u1est}. 
We then have
\[ \Vert \nabla\l(\mathbf{s}+h_1+h_2)\cdot\nabla u_2 -
\nabla\mathbf{s}(\Omega_1)\cdot_{\Omega_1} \nabla
u_2\Vert_{C^{0,\alpha}_{-2m}} \leqslant C\Vert
u_2\Vert_{C^{4,\alpha}_{2-2m}} = O(\epsilon^\kappa), \]
by \eqref{eq:u2h2}. So we only need to estimate $Q_{\Omega_1}(u_2)$,
but for this Lemma~\ref{lem:Q} implies
\[
\begin{aligned}
  \Vert Q_{\Omega_1}(u_2)\Vert_{C^{0,\alpha}_{-2m}} &\leqslant C\Vert
u_2\Vert_{C^{4,\alpha}_2}\,\Vert u_2\Vert_{C^{4,\alpha}_{4-2m}} \\
&\leqslant C\epsilon^{1-2m}\Vert
u_2\Vert_{C^{4,\alpha}_{3-2m}}\epsilon^{-1}\Vert
u_2\Vert_{C^{4,\alpha}_{3-2m}} = O(\epsilon^\kappa),
\end{aligned}\]
using \eqref{eq:u2h2} again. 
\end{proof}

\subsection{Solving the non-linear equation}\label{sec:nonlinear}
We are finally ready to
try solving the equation we need to, i.e. we want $u,h$ such that
\[ \mathbf{s}(\Omega_2 + \ddbar u) = \l_{\Omega_2 + \ddbar
  u}(\mathbf{s} + h_1 + h_2 + h). \]
Expanding this in terms of the linearized operator, we have
\[ \begin{aligned}
\mathbf{s}(\Omega_2) + L_{\Omega_2}(u) + Q_{\Omega_2}(u) &=
\l_{\Omega_2}(\mathbf{s} + h_1 + h_2) 
+\frac{1}{2}\nabla u\cdot\nabla\l(\mathbf{s} + h_1 + h_2) \\
&\quad +
\l_{\Omega_2}(h) + \frac{1}{2}\nabla u\cdot\nabla h,
\end{aligned}\]
which we can write as
\[ \widetilde{G}_1(u,h) = \l_{\Omega_2}(\mathbf{s}+h_1+h_2) -
\mathbf{s}(\Omega_2) + \frac{1}{2}\nabla u\cdot\nabla h -
Q_{\Omega_2}(u), \]
where $\widetilde{G}_1$ is defined by 
\[ \widetilde{G}_1(u,h) := L_{\Omega_2}(u) - \frac{1}{2}\nabla u\cdot
\nabla\l(\mathbf{s} + h_1 + h_2) - \l_{\Omega_2}(h). \]

It follows from Remark~\ref{rem:G}, and Lemma~\ref{lem:Omega2} that
this operator $\widetilde{G}_1$ is sufficiently close to the operator
$G_1$ in Proposition~\ref{prop:inverse1} when $\epsilon \ll 1$, so
that the inverse $P_1$ from
Proposition~\ref{prop:inverse1} can be used to obtain an inverse
$\widetilde{P}_1$ for $\widetilde{G}_1$ with uniformly bounded
norm. We are therefore trying to solve the fixed point problem
\[ (u,h) = \mathcal{N}(u,h), \]
where 
\[ \begin{aligned}
      \mathcal{N}(u,h) : C^{4,\alpha}_\delta \times
      \overline{\mathfrak{h}} &\to C^{4,\alpha}_\delta \times
      \overline{\mathfrak{h}} \\
        (u,h) &\mapsto  \widetilde{P}_1\Big(\l_{\Omega_2}(\mathbf{s} + h_1 +
h_2) - \mathbf{s}(\Omega_2) + \frac{1}{2}\nabla u \cdot\nabla h -
Q_{\Omega_2}(u)\Big),
\end{aligned} \]
and $\delta = 4-2m + \tau$ for sufficiently small $\tau > 0$. 

The following lemma is essentially identical to Lemma 23 in
\cite{GSz10}. 
\begin{lem}\label{lem:contract}
There is a constant $c_1 > 0$ such that if 
\[ \Vert v_i\Vert_{C^{4,\alpha}_2}, |g_i| < c_1, \]
for $i=1,2$, then 
\[ \Vert \mathcal{N}(v_1,g_1) - \mathcal{N}(v_2,g_2)
\Vert_{C^{4,\alpha}_\delta \times \overline{\mathfrak{h}}} \leqslant
\frac{1}{2} \Vert (v_1-v_2, g_1-g_2)\Vert_{C^{4,\alpha}_\delta\times
  \overline{\mathfrak{h}}}. \]
\end{lem}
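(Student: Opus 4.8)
The plan is to show that $\mathcal{N}$ is a contraction on a small ball by estimating each of the three nonconstant pieces inside $\widetilde{P}_1$, using that $\widetilde{P}_1$ has operator norm bounded independently of $\epsilon$. Write $\mathcal{N}(v,g) = \widetilde{P}_1\big(E + \tfrac12\nabla v\cdot\nabla g - Q_{\Omega_2}(v)\big)$ where $E = \l_{\Omega_2}(\mathbf{s}+h_1+h_2) - \mathbf{s}(\Omega_2)$ does not depend on $(v,g)$ and so drops out of any difference. Thus
\[
\mathcal{N}(v_1,g_1) - \mathcal{N}(v_2,g_2) = \widetilde{P}_1\Big(\tfrac12(\nabla v_1\cdot\nabla g_1 - \nabla v_2\cdot\nabla g_2) - (Q_{\Omega_2}(v_1) - Q_{\Omega_2}(v_2))\Big),
\]
and since $\Vert\widetilde{P}_1\Vert \le C$ uniformly in $\epsilon$, it suffices to bound the $C^{0,\alpha}_{\delta-4}$ norm of the argument by $\tfrac{1}{2C}\Vert(v_1-v_2,g_1-g_2)\Vert_{C^{4,\alpha}_\delta\times\overline{\mathfrak{h}}}$.

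First I would handle the quadratic term $Q_{\Omega_2}$. Lemma~\ref{lem:Q} (applied to $\Omega_2$ in place of $\omega_\epsilon$, which is legitimate since $\Omega_2 = \omega_\epsilon + \ddbar u_1 + \ddbar u_2$ with $\Vert u_1\Vert_{C^{4,\alpha}_2} + \Vert u_2\Vert_{C^{4,\alpha}_2}$ small by \eqref{eq:u1est} and \eqref{eq:u2h2}, so the constants in the estimates of Section~\ref{sec:approx} apply after the perturbation arguments of Remark~\ref{rem:G}) gives
\[
\Vert Q_{\Omega_2}(v_1) - Q_{\Omega_2}(v_2)\Vert_{C^{0,\alpha}_{\delta-4}} \le C\big(\Vert v_1\Vert_{C^{4,\alpha}_2} + \Vert v_2\Vert_{C^{4,\alpha}_2}\big)\Vert v_1 - v_2\Vert_{C^{4,\alpha}_\delta} \le 2Cc_1\Vert v_1-v_2\Vert_{C^{4,\alpha}_\delta},
\]
which is $\le \tfrac{1}{4C}\Vert v_1-v_2\Vert_{C^{4,\alpha}_\delta}$ once $c_1$ is chosen small enough. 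For the bilinear term I would write $\nabla v_1\cdot\nabla g_1 - \nabla v_2\cdot\nabla g_2 = \nabla(v_1-v_2)\cdot\nabla g_1 + \nabla v_2\cdot\nabla(g_1 - g_2)$ and estimate each summand using the multiplicative properties of the weighted spaces together with Lemma~\ref{lem:lb}: $\nabla\l(g)$ is controlled in $C^{1,\alpha}_0$ by $|g|$, so $\Vert\nabla(v_1-v_2)\cdot\nabla g_1\Vert_{C^{0,\alpha}_{\delta-4}} \le C\Vert v_1-v_2\Vert_{C^{4,\alpha}_{\delta-3}}|g_1| \le C\Vert v_1-v_2\Vert_{C^{4,\alpha}_\delta}|g_1|$ using that $\delta - 4 < \delta - 3$, and similarly $\Vert\nabla v_2\cdot\nabla(g_1-g_2)\Vert_{C^{0,\alpha}_{\delta-4}} \le C\Vert v_2\Vert_{C^{4,\alpha}_{\delta-1}}|g_1-g_2| \le C\Vert v_2\Vert_{C^{4,\alpha}_\delta}|g_1-g_2|$. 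Both are bounded by $Cc_1\Vert(v_1-v_2,g_1-g_2)\Vert$, hence $\le \tfrac{1}{4C}\Vert(v_1-v_2,g_1-g_2)\Vert$ for $c_1$ small. Adding the three contributions and applying $\widetilde{P}_1$ gives the factor $\tfrac12$.

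The main obstacle, as usual in these gluing arguments, is bookkeeping the weights rather than any single hard estimate: one must verify that $\delta = 4-2m+\tau$ lies in the range $(4-2m,0)$ required by Proposition~\ref{prop:inverse1} and that $\delta - 3 < 2$ so that the perturbation estimates of Remark~\ref{rem:G} transferring invertibility from $\omega_\epsilon$ to $\Omega_2$ are valid, and that the product estimates in the weighted spaces pick up weights that are summed in the way indicated (so that e.g. a function in $C^{4,\alpha}_2$ times a function in $C^{4,\alpha}_{\delta-2}$ lands in $C^{0,\alpha}_{\delta}$, losing derivatives appropriately). Since $\tau$ is taken arbitrarily small and $m \ge 3$, all these inequalities hold. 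The one genuinely substantive input is that $\widetilde{P}_1$ exists with $\epsilon$-independent norm, which was already arranged via Proposition~\ref{prop:inverse1}, Remark~\ref{rem:G}, and Lemma~\ref{lem:Omega2}; granting that, the contraction estimate is a routine consequence of Lemmas~\ref{lem:Q} and~\ref{lem:lb}, exactly as in Lemma 23 of~\cite{GSz10}.
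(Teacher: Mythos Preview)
Your proposal is correct and matches the paper's approach, which simply cites Lemma~23 of \cite{GSz10} without giving details; the argument you outline---bounding the $Q_{\Omega_2}$ difference via Lemma~\ref{lem:Q}, splitting the bilinear term $\nabla v\cdot\nabla g$ using Lemma~\ref{lem:lb} and the weight comparison $\Vert\cdot\Vert_{C^{0,\alpha}_{\delta-4}}\le\Vert\cdot\Vert_{C^{0,\alpha}_{\delta'}}$ for $\delta'>\delta-4$, and absorbing everything with the uniformly bounded $\widetilde P_1$---is exactly that standard argument. One small point of bookkeeping: in your estimate of $\nabla v_2\cdot\nabla(g_1-g_2)$ the relevant smallness comes from $\Vert v_2\Vert_{C^{4,\alpha}_\delta}\le\Vert v_2\Vert_{C^{4,\alpha}_2}<c_1$ (since $\delta<2$), which you should make explicit rather than the slightly garbled intermediate step through $C^{4,\alpha}_{\delta-1}$.
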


We can now complete the proof of Theorem~\ref{thm:gluing}. 
\begin{proof}[Proof of Theorem~\ref{thm:gluing}]
From Lemma~\ref{lem:Omega2} we have
\[ \Vert \mathcal{N}(0,0)
\Vert_{C^{4,\alpha}_\delta\times\overline{\mathfrak{h}}} 
\leqslant c_2\epsilon^{\kappa'}, \]
for some $\kappa' > 2m$. Define
\[ S = \{ (v,g)\,:\, \Vert v\Vert_{C^{4,\alpha}_\delta}, |g| \leqslant
2c_2\epsilon^{\kappa'}\}. \]
If $(v,g)\in S$, then for sufficiently small $\epsilon$ we have $|g| <
c_1$ with the $c_1$ from Lemma~\ref{lem:contract} and also 
\[ \Vert v\Vert_{C^{4,\alpha}_2} \leqslant C\epsilon^{\delta -2}\Vert
v\Vert_{C^{4,\alpha}_\delta} \leqslant 2Cc_2\epsilon^{\kappa' +
  \delta -2} < c_1, \]
for sufficiently small $\epsilon$, since $\kappa' + \delta -2 > 0$. It
follows that 
\[ \Vert \mathcal{N}(v,g)\Vert \leqslant \Vert \mathcal{N}(v,g) -
\mathcal{N}(0,0)\Vert + \Vert\mathcal{N}(0,0)\Vert \leqslant
\frac{1}{2}\Vert (v,g)\Vert + c_2\epsilon^{\kappa'} \leqslant
2c_2\epsilon^{\kappa'}, \]
so $\mathcal{N}$ is a contraction mapping $S$ into itself. We can
therefore find a fixed point $(u,h)$ of $\mathcal{N}$ in $S$, and this
gives a solution of the equation
\[ \mathbf{s}(\Omega_2 + \ddbar u) = \l_{\Omega_2+\ddbar u}
(\mathbf{s} + h_1 + h_2 + h). \]
From Lemma~\ref{lem:Gamma}, Equation~\eqref{eq:u2h2} and the fact that
$|h| \leqslant 2c_2\epsilon^{\kappa'}$, we have
\[ \begin{aligned}
  \mathbf{s} + h_1 + h_2 + h &= \mathbf{s} + h_1 + O(\epsilon^\kappa)
\\
&= \mathbf{s} - \epsilon^{2m-2}\frac{2\pi}{(m-2)!}(V^{-1}+\mu(p)) +
\epsilon^{2m}\frac{\mathbf{s}(p)}{m!}(V^{-1} + \mu(p)) \\
&\qquad - d_1\epsilon^{2m}\frac{2\pi^m}{(m-2)!}\Delta\mu(p) +
O(\epsilon^\kappa),
\end{aligned}\]
for some $\kappa > 2m$. This is what we wanted to prove. Since the
solution is obtained using the contraction mapping principle, the
solution will depend smoothly on the parameters. In particular when we
perform this construction at the set of all $T$-invariant points in
$M$, then the constant in $O(\epsilon^\kappa)$ can be chosen to be
uniform.  
\end{proof}

\section{Relative stability of blowups}\label{sec:relstab}
In this section we will give the proof of
Theorem~\ref{thm:Kstab}. Since we want to deal with K\"ahler manifolds
which are not necessarily algebraic, we will reformulate a simple
version of the usual
theory of K-stability in the K\"ahler setting, which is more similar
to Tian's original definition in \cite{Tian97} than to the more recent
algebro-geometric approach of Donaldson~\cite{Don02}.

\subsection{Relative K-stability for K\"ahler manifolds}
We will define relative K-stability for K\"ahler manifolds similarly
to Tian's definition~\cite{Tian97}. Our definition will actually be
simpler since we only consider test-configurations with smooth
central fibers.

\begin{defn}
A (smooth) test-configuration for a K\"ahler manifold $(M, \omega)$ consists of
a holomorphic submersion $\tau:\mathcal{X}\to\mathbf{C}$ together with
a holomorphic lift $v$ of the vector field
$\frac{\partial}{\partial\theta}$,
satisfying the following properties:
\begin{enumerate} 
  \item $\mathcal{X}$ admits a K\"ahler metric $\Omega$, for which
    $v$ is a Hamiltonian Killing field. 
  \item The pair $(\tau^{-1}(1), \Omega|_{\tau^{-1}(1)})$ is isometric
      to $(M, \omega)$.
\end{enumerate}
\end{defn}

The vector field $v$ gives a Hamiltonian Killing field on 
the central fiber $(M_0,\omega_0)$, which we will denote by $v$
also. Let us write $h_v$ for its Hamiltonian function. 
The Futaki invariant of this vector field is defined (see
Futaki~\cite{Fut83}, Calabi~\cite{Cal85}) to be
\begin{equation} \label{eq:Fut}
  \mathrm{Fut}(M_0, [\omega_0], v) = \int_{M_0}
  h_v(\overline{\mathbf{s}} - \mathbf{s}(\omega_0))\,\omega_0^n,
\end{equation}
where $\overline{\mathbf{s}}$ is the average of the scalar curvature
of $\omega_0$. 
The notation indicates that the Futaki
invariant does not depend on the particular metric chosen, only its
K\"ahler class.

For the definition of relative stability, we need to recall the
extremal vector field defined by Futaki-Mabuchi~\cite{FM95}. Let us
write $\mathrm{Aut}_0(M)$ for the connected component of the identity
in the automorphism group of $M$, and let
$\widetilde{\mathrm{Aut}}_0(M)$ be the kernel of the map from
$\mathrm{Aut}_0(M)$ to the Albanese torus of $M$. Finally, let
$G\subset\widetilde{\mathrm{Aut}}_0(M)$ be a maximal compact
subgroup. Suppose that $\omega$ is $G$-invariant. Then the action of
$G$ is Hamiltonian with respect to $\omega$ (see
e.g. LeBrun-Simanca~\cite{LS94}). This means that we can identify
elements
in the Lie algebra $\mathfrak{g}$ with their Hamiltonian functions,
normalized to have zero mean. Let us write
\[ \mathbf{s}_{ext} = \mathrm{proj}_{\mathfrak{g}}
\mathbf{s}(\omega), \]
for the $L^2$-projection of the scalar curvature of $\omega$ onto
$\mathfrak{g}$. The extremal vector field is the vector field
corresponding to $\mathbf{s}_{ext}$. The main result in \cite{FM95} is
that this vector field is independent of the choice of $G$-invariant
metric $\omega$. 

Suppose now that $T$ is a maximal torus in $G$. We say that the
test-configuration $(\mathcal{X},v)$ is compatible with $T$, if there is a
Hamiltonian holomorphic $T$-action on $\mathcal{X}$ preserving the fibers,
leaving $v$ invariant, and
such that when restricted to $\tau^{-1}(1)$ it recovers the $T$-action
on $M$. We define the modified Futaki invariant of such a
test-configuration as follows. As before, the central fiber of the
test-configuration has an induced Hamiltonian Killing field
$v$. Again, let $h_v$ be a Hamiltonian
function for $v$, normalized to have zero mean.
In addition, $(M_0,\omega_0)$ is
equipped with a Hamiltonian $T$-action, and in particular the extremal
vector field of $M$ induces a Hamiltonian holomorphic vector field
$v_{ext}$ on $M_0$, with a Hamiltonian function $h_{v_{ext}}$.
\begin{defn}
  The modified Futaki invariant of the test-configuration is defined
  to be
  \[ \mathrm{Fut}_{v_{ext}}(M_0,[\omega_0],v) = \int_{M_0}
  h_v(h_{v_{ext}} - \mathbf{s}(\omega_0))\,\omega_0^n. \]
\end{defn}

Note, in particular, that this coincides with the usual Futaki
invariant, if $h_v$ and $h_{v_{ext}}$ are orthogonal.

\begin{defn}
  We say that $(M,\omega)$ is K-semistable (with respect to
  smooth test-configurations), if
  \[ \mathrm{Fut}(M_0,[\omega_0],v)  \geqslant 0, \]
  for all smooth test-configurations, compatible with a maximal torus
  $T$ as above. If in addition equality only holds if the central
  fiber $M_0$ is biholomorphic to $M$, then $(M,\omega)$ is
  K-stable. Relative K-stability is defined analogously with the
  modified Futaki invariant replacing the Futaki invariant.  
\end{defn}

The following proposition follows from the theorem of Chen-Tian~\cite{CT05_1},
that the modified Mabuchi energy is bounded below, if $M$ admits an
extremal metric in the K\"ahler class $[\omega]$.
For test-configurations with smooth central fibers it is fairly
straight-forward to relate the modified Futaki invariant to the
behavior of the modified Mabuchi functional. This is
explained carefully in Tosatti~\cite{Tos12} and
Clarke-Tipler~\cite{CT13}.
As a consequence we have
the following. 

\begin{prop}\label{prop:Ksemistab}
If $M$ admits an extremal metric $\omega$, then $(M,\omega)$ is
relatively K-semistable (with respect to smooth test-configurations). 
\end{prop}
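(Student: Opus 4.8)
The plan is to adapt Tian's argument \cite{Tian97} relating the Futaki invariant of a test-configuration to the asymptotic slope of the (modified) Mabuchi functional, using crucially that the central fiber is smooth so that no subtleties from singular degenerations arise; this is carried out in detail in Tosatti~\cite{Tos12} and Clarke--Tipler~\cite{CT13}, so in practice the proof amounts to assembling their results together with the Chen--Tian theorem of \cite{CT05_1}.

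First I would fix a smooth test-configuration $(\mathcal{X},v)$ compatible with the maximal torus $T$, with total-space K\"ahler metric $\Omega$. For $s\in\mathbf{C}^*$ the fiber $\tau^{-1}(s)$ is biholomorphic to $M$ --- the flow generated by the real part of $v$ identifies all of these fibers --- and restricting $\Omega$ and pulling back to $M=\tau^{-1}(1)$ produces a ray of $T$-invariant K\"ahler metrics $\omega_t=\omega+\ddbar\phi_t$ in the class $[\omega]$, parametrized by $t\in[0,\infty)$ with $t\to\infty$ corresponding to approaching the central fiber; the metrics converge smoothly modulo automorphisms to $(M_0,\omega_0)$. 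Because the degeneration is through smooth fibers the ray $\phi_t$ is smooth in $t$ and the limiting behaviour of all the relevant quantities is controlled.

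Next I would compute the derivative along this ray of the modified Mabuchi functional $\mathcal{M}_{v_{ext}}$, namely the functional whose first variation at $\omega_\phi$ is $\int_M \dot\phi\,(h_{v_{ext}}-\mathbf{s}(\omega_\phi))\,\omega_\phi^m$ and whose critical points are the extremal metrics in $[\omega]$ relative to $T$. By the computations in \cite{Tos12, CT13}, for a smooth test-configuration one has
\[ \lim_{t\to\infty}\frac{d}{dt}\mathcal{M}_{v_{ext}}(\phi_t) = c\,\mathrm{Fut}_{v_{ext}}(M_0,[\omega_0],v) \]
for a positive constant $c$ fixed by the normalization of the volume; indeed in the limit $\dot\phi_t$ becomes the Hamiltonian $h_v$ of the generator of the induced $\mathbf{C}^*$-action on $M_0$, one has $\mathbf{s}(\omega_t)\to\mathbf{s}(\omega_0)$, and $h_{v_{ext}}$ is by Futaki--Mabuchi~\cite{FM95} the restriction to $M_0$ of the extremal field of $M$, so the integrand converges to $h_v(h_{v_{ext}}-\mathbf{s}(\omega_0))$.

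Finally I would conclude by a boundedness argument. By Chen--Tian~\cite{CT05_1}, since $M$ carries an extremal metric in $[\omega]$, the functional $\mathcal{M}_{v_{ext}}$ is bounded below on $T$-invariant K\"ahler potentials. If $\mathrm{Fut}_{v_{ext}}(M_0,[\omega_0],v)$ were negative, then $\frac{d}{dt}\mathcal{M}_{v_{ext}}(\phi_t)$ would eventually be bounded above by a negative constant, so $\mathcal{M}_{v_{ext}}(\phi_t)=\mathcal{M}_{v_{ext}}(\phi_{t_0})+\int_{t_0}^t\frac{d}{ds}\mathcal{M}_{v_{ext}}(\phi_s)\,ds\to-\infty$, contradicting the lower bound. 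Hence $\mathrm{Fut}_{v_{ext}}(M_0,[\omega_0],v)\geqslant 0$ for every smooth test-configuration compatible with $T$, which is exactly relative K-semistability with respect to smooth test-configurations. The main obstacle is the middle step: one has to verify that the naive pointwise limits genuinely produce the modified Futaki invariant and that the derivative of $\mathcal{M}_{v_{ext}}$ along the ray actually converges rather than merely having a $\limsup$. This is where the smoothness of the central fiber is indispensable, and it is precisely what the analyses of \cite{Tos12, CT13} supply; for singular central fibers this is the delicate heart of Tian's original argument and requires substantially more care.
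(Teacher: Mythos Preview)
Your proposal is correct and follows exactly the approach the paper itself indicates: the paper does not give a self-contained proof but simply points to Chen--Tian~\cite{CT05_1} for the lower bound on the modified Mabuchi energy and to Tosatti~\cite{Tos12} and Clarke--Tipler~\cite{CT13} for the identification of the asymptotic slope with the modified Futaki invariant in the smooth-central-fiber case. You have spelled out in more detail what the paper leaves as a citation, and the logic is the same.
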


Using the method of \cite{Sto08} and
\cite{SSz09} we can improve the ``semistability'' to ``stability'',
but first we need to study Futaki invariants on blowups.

\subsection{Futaki invariants on blowups}\label{sec:FutBl}
Suppose that $(M,\omega)$ is a K\"ahler manifold, and $v$ is a
Hamiltonian holomorphic vector field on $M$, with Hamiltonian function
$h_v$. If $p\in M$ is such that $v$ vanishes at $p$, then $v$ lifts to
a holomorphic vector field $\hat{v}$ on the blowup $\Bl_p M$. Moreover, the lift
is Hamiltonian with suitable choices of K\"ahler metric on the blowup,
in the class $\pi^*[\omega] - \epsilon^2[E]$. We need to compute the
Futaki invariant
\[ \mathrm{Fut}(\Bl_pM, \pi^*[\omega]-\epsilon^2[E], \hat{v}) \]
in terms of the Futaki invariant on $M$. In the algebraic case this
computation was done by Stoppa~\cite{Sto10}, and was refined in
\cite{GSz10} (see also Della Vedova~\cite{DV08}).
On K\"ahler surfaces the first term of the expansion was
calculated by Li-Shi~\cite{LS12_1} under the assumption that we blow up
a non-degenerate zero of the vector field. We can obtain the general
result for K\"ahler manifolds using the
simple observation that the difference
\[ \mathrm{Fut}(\Bl_pM, \pi^*[\omega] - \epsilon^2[E], \hat{v}) -
\mathrm{Fut}(M, [\omega], v) \]
can essentially be computed in a neighborhood of $p$, since we can
choose the metric on the blowup to coincide with the metric $\omega$
outside a neighborhood of $p$. Indeed this is what the metric
$\omega_\epsilon$ in Section~\ref{sec:omegaepsilon} is like. We can
also choose the Hamiltonian function $h_{\hat{v}}$ to coincide with
$h_v$ outside a small ball $B$ around $p$, by choosing $h_{\hat{v}} =
\l(h_v)$. We then have the following formulas.
\begin{prop}\label{prop:Blformulas}
  For sufficiently small $\epsilon > 0$ we have
  \begin{equation}\label{eq:formulas}
    \begin{aligned}
         \int_{M}\omega^m - \int_{\Bl_p M} \omega_\epsilon^m &=
         \epsilon^{2m}, \\
         \int_M h_v\,\omega^m - \int_{\Bl_pM}
         \l(h_v)\,\omega_\epsilon^m &= \epsilon^{2m}h_v(p) +
         \frac{\epsilon^{2m+2}}{m+1}\Delta h_v(p), \\
         \int_M \mathbf{s}(\omega)\,\omega^m - \int_{\Bl_pM}
         \mathbf{s}(\omega_\epsilon) \omega_\epsilon^m &=  2\pi m(m-1)
           \epsilon^{2m-2}, \\
         \int_M h_v\mathbf{s}(\omega)\,\omega^m - \int_{\Bl_pM}
         \l(h_v)\mathbf{s}(\omega_\epsilon)\, \omega_\epsilon^m &=
         2\pi m(m-1)\epsilon^{2m-2} h_v(p) \\
         &\qquad +
         2\pi(m-2)\epsilon^{2m} \Delta h_v(p). 
  \end{aligned}
  \end{equation}
\end{prop}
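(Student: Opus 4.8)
The plan is to use the localization principle indicated just before the statement. Fix the metric $\omega_\epsilon$ on $\Bl_pM$ from Section~\ref{sec:omegaepsilon}, which under the blowdown $\pi$ agrees with $\omega$ outside $B_{2r_\epsilon}$, take $h_{\hat v}=\l(h_v)$ as the Hamiltonian of the lifted field (so $h_{\hat v}=h_v$ away from $p$), and note $\mathbf{s}(\omega_\epsilon)=\mathbf{s}(\omega)$ outside $B_{2r_\epsilon}$. Thus in each of the four identities the contributions from $M\setminus B_{2r_\epsilon}$ cancel, and one is reduced to evaluating
\[ \int_{B_{2r_\epsilon}} f\,\omega^m \;-\; \int_{\pi^{-1}(B_{2r_\epsilon})} \tilde f\,\omega_\epsilon^m , \]
where $(f,\tilde f)$ runs over $(1,1)$, $(h_v,\l(h_v))$, $(\mathbf{s}(\omega),\mathbf{s}(\omega_\epsilon))$ and $(h_v\mathbf{s}(\omega),\l(h_v)\mathbf{s}(\omega_\epsilon))$. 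Then I would rescale $w=\epsilon^{-1}z$, so the blowup integral becomes $\epsilon^{2m}$ times an integral over $\{|w|<R_\epsilon\}\subset\Bl_0\mathbf{C}^m$ against the Burns--Simanca metric $\eta$, using $\mathbf{s}(\omega_\epsilon)=\epsilon^{-2}\mathbf{s}(\eta)=0$ on $B_{r_\epsilon}$. Because the $T$-action linearizes near $p$, in suitable coordinates $h_v=h_v(p)+\tfrac12\sum_j\lambda_j|z_j|^2+O(|z|^4)$ with $\sum_j\lambda_j$ a fixed multiple of $\Delta h_v(p)$, while $\l(h_v)=h_v(p)+\epsilon^2 h^\eta$ with $h^\eta$ the $\eta$-Hamiltonian of the lift; inserting these together with the normal-coordinate expansion $\omega=\omega_0+\ddbar(A_4+\cdots)$ and the expansion of $\eta$ from Lemma~\ref{lem:BSmetric}, the flat pieces cancel against the rescaled flat pieces, the two leading moments $\int h_v(p)\,(\cdot)$ and $\int|z|^2(\cdot)$ produce the renormalized volume and renormalized first moment of $\Bl_0\mathbf{C}^m$ (fixed by the normalization of $\eta$), and the various correction terms are of strictly lower order in $\epsilon$.

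For the two ``degree zero'' identities the cleanest route is purely cohomological, and I would present it that way: $(\pi^*[\omega]-\epsilon^2[E])^m=[\omega]^m-\epsilon^{2m}$ since $\pi^*[\omega]\cup[E]=0$ and $[E]^m=(-1)^{m-1}$, which gives the first formula; and since $\int_M\mathbf{s}(\omega)\,\omega^m$ is a fixed multiple of $c_1(M)\cup[\omega]^{m-1}$, while $c_1(\Bl_pM)=\pi^*c_1(M)-(m-1)[E]$ and $[\omega_\epsilon]=\pi^*[\omega]-\epsilon^2[E]$, the same expansion produces the defect $2\pi m(m-1)\epsilon^{2m-2}$ of the third formula. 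The second and fourth identities are the equivariant refinements of these: replace $[\omega]$ by the equivariant class $\omega-h_v$ and $c_1$ by its equivariant lift, use the natural equivariant extension of $[E]$, and the difference between $M$ and $\Bl_pM$ becomes the equivariant fixed-point contribution of $E\cong\mathbf P^{m-1}$ with normal bundle $\mathcal O(-1)$ and the linearized $T$-weights at $p$. In the algebraic case this is exactly Stoppa's computation in \cite{Sto10} (refined in \cite{GSz10}); the point of the reduction in the first paragraph is that, since the whole difference is supported near $p$ and there the geometry is the explicit flat/Burns--Simanca model, the same output is valid for general K\"ahler $M$, either by quoting that computation or by carrying out the local integral directly.

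The main obstacle is the bookkeeping in the local computation: one must track the $O(|z|^4)$ corrections of $\omega$, the $O(|w|^{4-2m})$ corrections of $\eta$, and the cutoff errors in the annulus, and in particular verify that all the pieces which grow as $R_\epsilon=\epsilon^{-1}r_\epsilon\to\infty$ cancel between $B_{r_\epsilon}$, the annulus, and the comparison integral over $B_{2r_\epsilon}\subset M$, so that the limit exists and equals precisely the stated coefficients (no further $\epsilon$-tail), with the constants $\tfrac1{m+1}$, $2\pi m(m-1)$, $2\pi(m-2)$ coming out correctly under the paper's conventions for $\ddbar$ versus $dd^c$, for the complex Laplacian, and for the Burns--Simanca normalization. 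In the equivariant-localization formulation this obstacle is reduced to the single finite calculation of the fixed-point contribution of $E$, which is where I would expect to spend most of the effort and where the vanishing of the $\Delta h_v(p)$ term for $m=2$ should be read off as a consistency check.
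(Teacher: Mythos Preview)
Your localization strategy is correct and matches the paper's starting point: the four differences are supported in $B_{2r_\epsilon}$, and the first and third identities follow cohomologically exactly as you say. Where you diverge is in how to evaluate the two equivariant differences. You propose either a direct local integral (rescale to $\eta$, expand $h_v$ and $\omega$, and track all the cross-terms and cutoff errors) or an equivariant-cohomology fixed-point computation at $E$. Both are viable, and you are right that the bookkeeping is the main cost.

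The paper sidesteps that bookkeeping entirely with a trick you do not use: rather than computing the local integral, it \emph{transplants} the local data to projective space. Since $v$ vanishes at $p$, one can linearize it in normal coordinates, and then choose a metric $\Omega\in c_1(\mathcal O(1))$ on $\mathbf{P}^m$ together with a Killing field $V$ vanishing at a point $P$ so that $(B_{2r_\epsilon}(P),\Omega,V,h_V)$ is isometric to $(B_{2r_\epsilon}(p),\omega,v,h_v)$. The difference you localized then equals the \emph{same} difference computed on $\mathbf{P}^m$ and $\Bl_P\mathbf{P}^m$. There one is in the algebraic (even toric) setting, and the answer is read off from \cite[Lemma 28]{GSz10} and \cite[Proposition 2.2.2]{Don02}. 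Your remark that ``since the difference is local, the same output is valid for general K\"ahler $M$'' is pointing at this, but the explicit matching with $\mathbf{P}^m$ is what makes that sentence into an argument and removes the need for any direct integration or equivariant localization. The payoff of the paper's route is that no asymptotic cancellation or constant-chasing is required; the payoff of your direct route would be a self-contained computation independent of the algebraic literature.
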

\begin{proof}
  Each of these formulas can be reduced to a calculation on the
  projective space $\mathbf{P}^m$. The first and third formulas can
  also be checked easily using the cohomological interpretations of
  the integrals. The formulas involving $h_v$ could also be approached
  using equivariant cohomology, but we will not pursue this.

  In order to reduce the problem to a calculation on $\mathbf{P}^m$,
  note that each pair of integrals coincides outside a ball $B$ (in
  the notation of Section~\ref{sec:omegaepsilon} we are taking
  $B=B_{2r_\epsilon}$). So for instance we have
  \begin{equation}\label{eq:difference}
    \int_M \omega^m - \int_{\Bl_pM} \omega_\epsilon^m = \int_B
    \omega^m - \int_{\Bl_pB}\omega_\epsilon^m,
  \end{equation}
  with similar formulas for the other 3 integrals. 
  Since $v$ vanishes at $p$, we can choose
  coordinates around $p$ in which $v$ is given by a linear
  transformation. For small $\epsilon > 0$, we
  can therefore choose a metric $\Omega$ on
  $\mathbf{P}^m$ in the class $c_1(\mathcal{O}(1))$,
  together with a holomorphic Killing field $V$
  vanishing at a point $P\in \mathbf{P}^m$ (with Hamiltonian $h_V$),
  such that the data $(B_{2r_\epsilon}(P), \Omega, V, h_V)$ is
  equivalent in the obvious sense to
  the corresponding data $(B_{2r_\epsilon}(p), \omega, v,
  h_v)$. It follows from \eqref{eq:difference} together with the
  analogous formula on $\mathbf{P}^m$, that
  \[ \int_M \omega^m - \int_{\Bl_pM} \omega_\epsilon^m =
  \int_{\mathbf{P}^m} \Omega^m - \int_{\Bl_P\mathbf{P}^m}
  \Omega_\epsilon^m, \]
  where $\Omega_\epsilon$ is a metric on $\Bl_P\mathbf{P}^m$ in the
  class $\pi^*[\Omega] - \epsilon^2[E]$ constructed just like we
  constructed $\omega_\epsilon$. The analogous formula holds for all
  of the differences that we need to compute in
  Equation~\eqref{eq:formulas}. We have therefore reduced the problem
  to a calculation on $\mathbf{P}^m$. 

  On $\mathbf{P}^m$ one way to do the calculation would be to perform
  a computation in terms of toric geometry, since we can assume that
  $P$ is fixed by a maximal torus of automorphisms of
  $\mathbf{P}^m$. Alternatively, we can use an algebro-geometric
  calculation, since by continuity it is enough to deal with the case
  when  $\epsilon$ is rational. It is essentially this that we have
  already calculated in \cite[Lemma 28]{GSz10}. The results of that
  Lemma, together with the calculations in \cite[Proposition
  2.2.2]{Don02} imply the result we want. 
\end{proof}

Using this proposition we can compute the Futaki invariant on a
blowup, extending \cite[Corollary 29]{GSz10} to the K\"ahler case. 
\begin{cor}\label{cor:FutBl}
  Suppose that $h_v$ is normalized to have zero mean. 
  For sufficiently small $\epsilon > 0$ we have an expression
  \begin{equation}\label{eq:FB0}
     \mathrm{Fut}(\Bl_pM, [\omega_\epsilon], \hat{v}) = \mathrm{Fut}(M,
     [\omega], v) + A_\epsilon h_v(p) + B_\epsilon \Delta h_v(p), 
   \end{equation}
  where $A_\epsilon = O(\epsilon^{2m-2})$ and
  $B_\epsilon=O(\epsilon^{2m})$ 
  are functions of $\epsilon$
  depending on $(M, [\omega])$. One can easily expand $A_\epsilon$,
  $B_\epsilon$ in terms of $\epsilon$. In general 
  \begin{equation}\label{eq:FB1}
    \mathrm{Fut}(\Bl_pM, [\omega_\epsilon], \hat{v}) = \mathrm{Fut}(M,
  [\omega], v) + 2\pi m(m-1)\epsilon^{2m-2} h_v(p) +
  O(\epsilon^{2m}).
  \end{equation}
  Suppose that $\mathrm{Fut}(M, [\omega], v) = 0$, and $m > 2$. Then
  \begin{equation} \label{eq:FutBl} \begin{aligned}
       \mathrm{Fut}(\Bl_pM, [\omega_\epsilon], \hat{v}) &=
       2\pi m(m-1)\epsilon^{2m-2}h_v(p) \\
       &\qquad +
      \epsilon^{2m}\Big( 2\pi (m-2)\Delta h_v(p) - \overline{\mathbf{s}}
      h_v(p)\Big) + O(\epsilon^{2m+2}),
  \end{aligned} \end{equation}
  where $\overline{\mathbf{s}}$ is the average scalar curvature of $(M,
  \omega)$. 
  In addition if $h_v(p) = \Delta h_v(p) =0$, then $\mathrm{Fut}(\Bl_pM,
  [\omega_\epsilon], \hat{v})=0$. 
\end{cor}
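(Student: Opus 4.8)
The plan is to substitute the four formulas of Proposition~\ref{prop:Blformulas} directly into the definition \eqref{eq:Fut} of the Futaki invariant. Write $V=\int_M\omega^m$ for the total volume, and let $\overline{\mathbf{s}}$, $\overline{\mathbf{s}}_\epsilon$ denote the average scalar curvatures of $(M,\omega)$ and $(\Bl_pM,\omega_\epsilon)$. Since $\overline{\mathbf{s}}$ in \eqref{eq:Fut} is by definition the average of the scalar curvature, the value of the Futaki invariant is unchanged if we add a constant to the Hamiltonian; hence on $\Bl_pM$ we may use $\l(h_v)$ itself as Hamiltonian function for $\hat v$, and — using that $h_v$ has zero mean on $M$ —
\[ \mathrm{Fut}(M,[\omega],v) = -\int_M h_v\,\mathbf{s}(\omega)\,\omega^m, \qquad \mathrm{Fut}(\Bl_pM,[\omega_\epsilon],\hat v) = \overline{\mathbf{s}}_\epsilon\int_{\Bl_pM}\l(h_v)\,\omega_\epsilon^m - \int_{\Bl_pM}\l(h_v)\,\mathbf{s}(\omega_\epsilon)\,\omega_\epsilon^m. \]

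Next I would feed in Proposition~\ref{prop:Blformulas}. From the second formula together with $\int_M h_v\,\omega^m=0$ one gets $\int_{\Bl_pM}\l(h_v)\,\omega_\epsilon^m = -\epsilon^{2m}h_v(p)-\frac{\epsilon^{2m+2}}{m+1}\Delta h_v(p)$; the first and third formulas give $\int_{\Bl_pM}\omega_\epsilon^m = V-\epsilon^{2m}$ and $\int_{\Bl_pM}\mathbf{s}(\omega_\epsilon)\,\omega_\epsilon^m = \overline{\mathbf{s}}V-2\pi m(m-1)\epsilon^{2m-2}$, hence $\overline{\mathbf{s}}_\epsilon = (\overline{\mathbf{s}}V-2\pi m(m-1)\epsilon^{2m-2})/(V-\epsilon^{2m})$; and the fourth formula supplies $\int_{\Bl_pM}\l(h_v)\,\mathbf{s}(\omega_\epsilon)\,\omega_\epsilon^m$. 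Substituting these yields an \emph{exact} identity $\mathrm{Fut}(\Bl_pM,[\omega_\epsilon],\hat v)=\mathrm{Fut}(M,[\omega],v)+A_\epsilon h_v(p)+B_\epsilon\Delta h_v(p)$, where $A_\epsilon$ and $B_\epsilon$ are explicit rational functions of $\epsilon$ depending only on $m$, $V$ and $\overline{\mathbf{s}}$. Their lowest-order terms are $A_\epsilon = 2\pi m(m-1)\epsilon^{2m-2}+O(\epsilon^{2m})$ and $B_\epsilon = 2\pi(m-2)\epsilon^{2m}+O(\epsilon^{2m+2})$, which is \eqref{eq:FB0}; truncating gives \eqref{eq:FB1}.

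For the sharper expansion \eqref{eq:FutBl} I would expand $A_\epsilon$ one term further. Since $\overline{\mathbf{s}}_\epsilon = \overline{\mathbf{s}}+O(\epsilon^{2m-2})$ and $\int_{\Bl_pM}\l(h_v)\,\omega_\epsilon^m = -\epsilon^{2m}h_v(p)+O(\epsilon^{2m+2})$, the $\epsilon^{2m}$-coefficient of $A_\epsilon$ equals $-\overline{\mathbf{s}}$, with the remaining error coming from the denominator $V-\epsilon^{2m}$ of order $\epsilon^{4m-2}$. This is where the hypothesis $m>2$ enters: $4m-2>2m+2$, so that error is $O(\epsilon^{2m+2})$; combining with the leading $\epsilon^{2m}$ term of $B_\epsilon\Delta h_v(p)$ and using $\mathrm{Fut}(M,[\omega],v)=0$ produces exactly \eqref{eq:FutBl}. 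Finally, if $h_v(p)=\Delta h_v(p)=0$ then the exact identity \eqref{eq:FB0}, together with the hypothesis $\mathrm{Fut}(M,[\omega],v)=0$, immediately gives $\mathrm{Fut}(\Bl_pM,[\omega_\epsilon],\hat v)=0$.

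There is no genuine obstacle here: all of the geometry has already been packaged into Proposition~\ref{prop:Blformulas}, whose proof reduces everything to the model computation on $\mathbf{P}^m$, and what remains is substitution and bookkeeping. The one point demanding care is the remainder estimate in passing from \eqref{eq:FB0} to \eqref{eq:FutBl}, where the inequality $4m-2>2m+2$ (equivalently $m>2$) is needed to absorb the $\epsilon^{4m-2}$ contribution of the denominator $V-\epsilon^{2m}$ into the asserted $O(\epsilon^{2m+2})$ error.
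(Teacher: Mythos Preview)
Your proposal is correct and follows essentially the same route as the paper: both substitute the four identities of Proposition~\ref{prop:Blformulas} into the definition of the Futaki invariant, obtain the exact identity with $A_\epsilon = 2\pi m(m-1)\epsilon^{2m-2} - \epsilon^{2m}\overline{\mathbf{s}}_\epsilon$ and $B_\epsilon = 2\pi(m-2)\epsilon^{2m} - \frac{\epsilon^{2m+2}}{m+1}\overline{\mathbf{s}}_\epsilon$, and then expand. Your explicit tracking of the $O(\epsilon^{4m-2})$ remainder and the role of $m>2$ is a bit more detailed than the paper, which simply appeals to the expansion of $\overline{\mathbf{s}}_\epsilon$ without spelling this out.
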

\begin{proof}

  Let us write $\overline{\mathbf{s}}_\epsilon$ for the average scalar
  curvature of $\omega_\epsilon$. Then from \eqref{eq:formulas} we
  have
  \begin{equation}\label{eq:sepsilon}
    \begin{aligned}
    \overline{\mathbf{s}}_\epsilon &= \frac{\int_{\Bl_pM}
    \mathbf{s}(\omega_\epsilon)\, \omega_\epsilon^m}{\int_{\Bl_pM}
    \omega_\epsilon^m} = \frac{ \int_M \mathbf{s}(\omega)\,\omega^m -
    2\pi m(m-1)\epsilon^{2m-2}}{\int_M \omega^m - \epsilon^{2m}}  \\
   &= \overline{\mathbf{s}} + O(\epsilon^{2m-2}),
  \end{aligned} 
  \end{equation}
  and
  \begin{equation}\label{eq:f1} \begin{aligned}
    \int_{\Bl_pM} \l(h_v)\,\omega_\epsilon^m &= -\epsilon^{2m}h_v(p) -
    \frac{\epsilon^{2m+2}}{m+1} \Delta h_v(p), \\
    \int_{\Bl_pM}
    \l(h_v)\mathbf{s}(\omega_\epsilon)\,\omega_\epsilon^m &= \int_M
    h_v\mathbf{s}(\omega)\,\omega^m -2\pi
    m(m-1)\epsilon^{2m-2} h_v(p) \\ &\qquad - 2\pi (m-2)\epsilon^{2m}\Delta
    h_v(p). 
  \end{aligned}
\end{equation}
Combining these, and using that
\[ \mathrm{Fut}(M, [\omega],v) = -\int
h_v\mathbf{s}(\omega)\,\omega^m \]
since $h_v$ has integral zero, we get
 \[ \begin{aligned}
    \mathrm{Fut}(\Bl_pM, [\omega_\epsilon], \hat{v}) &= \int_{\Bl_pM}
    \l(h_v) (\overline{\mathbf{s}}_\epsilon -
    \mathbf{s}(\omega_\epsilon))\, \omega_\epsilon^m \\
    &= \overline{\mathbf{s}}_\epsilon \int_{\Bl_pM} \l(h_v)\,\omega_\epsilon^m
    - \int_{\Bl_pM}
    \l(h_v)\mathbf{s}(\omega_\epsilon)\,\omega_\epsilon^m  \\
    &= \overline{\mathbf{s}}_\epsilon\left(-\epsilon^{2m}h_v(p) -
      \frac{\epsilon^{2m+2}}{m+1} \Delta h_v(p)\right) - \int_M
    h_v\mathbf{s}(\omega)\, \omega^m \\
    &\qquad + 2\pi m(m-1)\epsilon^{2m-2} h_v(p) +
    2\pi(m-2)\epsilon^{2m} \Delta h_v(p) \\
    &= \mathrm{Fut}(M,[\omega],v) + A_\epsilon h_v(p) + B_\epsilon
    \Delta h_v(p). 
  \end{aligned}\]
  Here
  \[\begin{gathered} A_\epsilon = 2\pi m(m-1)\epsilon^{2m-2} -
    \epsilon^{2m}\overline{\mathbf{s}}_\epsilon, \\
    B_\epsilon = 2\pi (m-2)\epsilon^{2m} -
    \frac{\epsilon^{2m+2}}{m+1}\overline{\mathbf{s}}_\epsilon .
    \end{gathered}\]
  Using this together with the formula \eqref{eq:sepsilon}, we can
  obtain all the results that we are trying to prove.
\end{proof}

We need one more result, relating the inner product of holomorphic
Killing fields, introduced by
Futaki-Mabuchi~\cite{FM95}. This inner product is simply the $L^2$
product of the Hamiltonian functions, which are normalized to have
zero mean. So if $v, w$ have Hamiltonians $h_v, h_w$, normalized to
have zero mean on $M$, then
\[ \langle v,w\rangle = \int_M h_v h_w\,\omega^m. \]
If $v,w$ vanish at $p\in M$, then the product of the lifts to
$\Bl_pM$ is given by
\begin{equation}\label{eq:product}
  \langle \hat{v}, \hat{w}\rangle = \int_{\Bl_pM}
  \l(h_v)\l(h_w)\,\omega_\epsilon^m - \frac{1}{V_\epsilon} \int_{\Bl_pM}
\l(h_v)\,\omega_\epsilon^m \int_{\Bl_pM}
\l(h_w)\,\omega_\epsilon^m, 
\end{equation}
where $V_\epsilon$ is the volume of $\Bl_pM$ with respect to
$\omega_\epsilon$. The crucial property of this inner product is that
it is independent of the representative $\omega_\epsilon$ of its
K\"ahler class.

\begin{prop}\label{prop:prodBl} 
  Assume that $\langle v,w\rangle = 0$. Then for any $\delta > 0$ we
  have on the blowup $\Bl_pM$, that
  \[ \langle \hat{v},\hat{w}\rangle = O(\epsilon^{2m-\delta}).\]
  If in addition $h_v$ is normalized to have zero mean on $M$, and
  $h_v(p)=0$, then for any $\delta > 0$ we have
  \[ \langle\hat{v},\hat{w}\rangle = O(\epsilon^{2m+2-\delta}). \]
  In fact we could even take $\delta =0$ in both formulas, but we will
  not need this.
\end{prop}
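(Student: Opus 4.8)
The plan is to evaluate the formula \eqref{eq:product} for $\langle\hat v,\hat w\rangle$ by comparing each integral over $\Bl_pM$ with its counterpart over $M$. Since $\l(h_v)$ agrees with $h_v$, and $\omega_\epsilon$ agrees with $\omega$, outside the small ball $B=B_{2r_\epsilon}$ around $p$, each such difference is supported in $B$ and can be reduced to a model computation on $\mathbf{P}^m$ exactly as in the proof of Proposition~\ref{prop:Blformulas}. Take $h_v,h_w$ to be the Hamiltonians normalized to have zero mean on $M$, so that $\langle v,w\rangle=0$ reads $\int_M h_v h_w\,\omega^m=0$ and also $\int_M h_v\,\omega^m=\int_M h_w\,\omega^m=0$. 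The first step is to establish the ``$h_v$ against $h_w$'' analogue of the second formula of \eqref{eq:formulas}, namely
\[ \int_{\Bl_pM}\l(h_v)\l(h_w)\,\omega_\epsilon^m - \int_M h_v h_w\,\omega^m = -\epsilon^{2m}h_v(p)h_w(p) + O(\epsilon^{2m+2}); \]
this is proved like Proposition~\ref{prop:Blformulas}, by choosing coordinates near $p$ in which $v$ and $w$ act linearly, transferring the data $(B,\omega,v,w,h_v,h_w)$ to a model on $\mathbf{P}^m$, and evaluating the resulting universal difference by toric or algebro-geometric means as in \cite[Lemma 28]{GSz10} and \cite[Proposition 2.2.2]{Don02}.

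Granting this, substitute into \eqref{eq:product}: the first term equals $-\epsilon^{2m}h_v(p)h_w(p)+O(\epsilon^{2m+2})$ since $\int_M h_v h_w\,\omega^m=0$, while from Proposition~\ref{prop:Blformulas} we have $\int_{\Bl_pM}\l(h_v)\,\omega_\epsilon^m=-\epsilon^{2m}h_v(p)+O(\epsilon^{2m+2})$ and likewise for $w$, so the correction term $V_\epsilon^{-1}(\int\l(h_v)\,\omega_\epsilon^m)(\int\l(h_w)\,\omega_\epsilon^m)$ is $O(\epsilon^{4m})$ and negligible. Hence
\[ \langle\hat v,\hat w\rangle = -\epsilon^{2m}h_v(p)h_w(p) + O(\epsilon^{2m+2}). \]
In particular $\langle\hat v,\hat w\rangle=O(\epsilon^{2m})$, which gives the first assertion (indeed with $\delta=0$); and if moreover $h_v(p)=0$, the leading term drops out and $\langle\hat v,\hat w\rangle=O(\epsilon^{2m+2})$, which gives the second.

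The main obstacle is the product formula of the first paragraph, and in particular controlling its remainder to order $\epsilon^{2m+2}$: the crude estimate $|h_v-h_v(p)|\le Cr^2$ on $B$ only yields an error $O(r_\epsilon^{2m+2})=O(\epsilon^{\alpha(2m+2)})$, which is not $O(\epsilon^{2m-\delta})$ for every $\delta>0$. The reduction to $\mathbf{P}^m$ is exactly what removes this loss, and one needs from it only the leading term $\epsilon^{2m}h_v(p)h_w(p)$ and the fact that the next term has order $\epsilon^{2m+2}$ (which is also where the sharper statement with $\delta=0$ comes from). If one prefers to avoid the model computation, it suffices for the non-sharp bounds to carry out the gluing with $r_\epsilon=\epsilon^\beta$ for $\beta$ sufficiently close to $1$: neither the construction of $\omega_\epsilon$ nor the value of $\langle\hat v,\hat w\rangle$ depends on this, while the crude error $O(\epsilon^{\beta(2m+2)})$ then becomes small compared with $\epsilon^{2m}$, and --- using the second formula of \eqref{eq:formulas} to expand $\int_{\Bl_pM}\l(h_v)\,\omega_\epsilon^m$ when $h_v(p)=0$ --- small compared with $\epsilon^{2m+2}$ in the second case.
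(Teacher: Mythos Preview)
Your proposal is correct, and in fact your alternative route at the end is exactly the paper's argument. The paper does not pursue your primary approach via reduction to $\mathbf{P}^m$; it goes directly to the crude estimate, bounding both $\int_{B_{2r_\epsilon}} h_v h_w\,\omega^m$ and $\int_{\Bl_p B_{2r_\epsilon}} \l(h_v)\l(h_w)\,\omega_\epsilon^m$ separately by $O(r_\epsilon^{2m})$ from the volume alone, and in the second case improving to $O(r_\epsilon^{2m+2})$ via the observation that $v(p)=0$ forces $\nabla h_v(p)=0$, so $h_v(p)=0$ gives $h_v\in C^0_2$. It then invokes the freedom to choose $r_\epsilon=\epsilon^\alpha$ with $\alpha$ close to $1$, exactly as you do, relying on the fact that $\langle\hat v,\hat w\rangle$ depends only on the K\"ahler class.

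The difference is one of precision: your primary approach extracts the leading term $-\epsilon^{2m}h_v(p)h_w(p)$ and would yield $\delta=0$, but requires the product analogue of \eqref{eq:formulas}, which you only sketch (note in particular that simultaneous linearization of $v$ and $w$ near $p$ needs the Bochner argument applied to the compact group they generate). The paper explicitly acknowledges that one could get $\delta=0$ by an algebro-geometric calculation as in Proposition~\ref{prop:Blformulas}, but does not need it and so does not carry it out. One small inaccuracy in your last sentence: the crude error $O(\epsilon^{\beta(2m+2)})$ is not small \emph{compared with} $\epsilon^{2m+2}$ since $\beta<1$, but it is $O(\epsilon^{2m+2-\delta})$ for $\delta=(1-\beta)(2m+2)$, which is what is actually claimed.
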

\begin{proof}
We can assume that $h_v$ and $h_w$ are normalized to have zero mean on
$M$. Then from \eqref{eq:formulas} we know that the averages of
$\l(h_v)$ and $\l(h_w)$ on $\Bl_pM$ are of order $\epsilon^{2m}$, so
in the formula \eqref{eq:product} for $\langle\hat{v},\hat{w}\rangle$
we can ignore  the integrals of $\l(h_v)$ and $\l(h_w)$. 

Since $\l(h_v)\l(h_w) = h_vh_w$ and $\omega_\epsilon=\omega$ outside
$B_{2r_\epsilon}$, we just need to estimate the integrals on
$B_{2r_\epsilon}$ with respect to the different metrics $\omega$ and
$\omega_\epsilon$. 
The volume of $B_{2r_\epsilon}$ is
$O(r_\epsilon^{2m})$ with respect to both $\omega$ and
$\omega_\epsilon$ so we obtain
\[ \langle \hat{v},\hat{w}\rangle = O(r_\epsilon^{2m}). \]
If in addition $h_v(q)=0$, then we have $h_v\in C^0_2$, since also
$\nabla h_v(q)=0$ by our assumption. It follows that also $\l(h_v)\in
C^0_2$. This implies that
\[ \int_{B_{2r_\epsilon}} h_vh_w\,\omega^m \leqslant
C\int_0^{2r_\epsilon} r^2r^{2m-1}\,dr = O(r_\epsilon^{2m+2}), \]
and also
\[ \int_{B_{2r_\epsilon}} h_vh_w\,\omega_\epsilon^m \leqslant C\left(
  \int_\epsilon^{2r_\epsilon} r^2r^{2m-1}\,dr +
  \epsilon^2\epsilon^{2m}\right) = O(r_\epsilon^{2m+2}). \] 
We can do the construction with $r_\epsilon = \epsilon^\alpha$ for any
$\alpha < 1$, and choosing $\alpha$ sufficiently close to 1 we obtain
the results we want. Note that one can do the analogous calculation
algebro-geometrically and get a more precise result like in
Proposition~\ref{prop:Blformulas}, but we will not need this. 
\end{proof}

We can now improve Proposition~\ref{prop:Ksemistab} following
Stoppa~\cite{Sto08} and \cite{SSz09} to get
\begin{prop}\label{prop:Kstab}
 Suppose that $M$ admits an extremal metric $\omega$. Then
 $(M,[\omega])$ is relatively K-stable (with respect to smooth
 test-configurations).  
\end{prop}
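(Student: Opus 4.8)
The plan is to combine relative K-semistability (Proposition~\ref{prop:Ksemistab}) with a blowup argument in the spirit of Stoppa~\cite{Sto08} and \cite{SSz09}: if $(M,[\omega])$ were relatively K-unstable, we would produce a destabilizing test-configuration and then show that blowing up a suitable point, together with the existence of the extremal metric, contradicts relative K-semistability of the blowup. The key conceptual point — already visible in Corollary~\ref{cor:FutBl} — is that the modified Futaki invariant on $\Bl_pM$ picks up, to leading order in $\epsilon$, a term proportional to $h_v(p)$, where $v$ is the vector field generating the (relatively) destabilizing configuration. Choosing $p$ to be a point where the relevant Hamiltonian is negative then forces the modified Futaki invariant of the blowup configuration to be negative for small $\epsilon$, contradicting Proposition~\ref{prop:Ksemistab} applied to $\Bl_pM$, which admits an extremal metric by Theorem~\ref{thm:main} (or rather, by the Arezzo--Pacard--Singer type result it generalizes, in the relatively stable case).

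First I would set up the argument by contradiction: suppose $(M,[\omega])$ is not relatively K-stable. By Proposition~\ref{prop:Ksemistab} it is relatively K-semistable, so there is a smooth test-configuration, compatible with a maximal torus $T$, whose central fiber $M_0$ is \emph{not} biholomorphic to $M$ but with vanishing modified Futaki invariant. Actually, following the standard reduction, one first wants to reduce to a product test-configuration: a degenerate-to-the-normal-cone or deformation-to-the-normal-cone construction is not needed here — instead one uses that the central fiber $M_0$ of the destabilizing configuration itself carries a holomorphic vector field $v$ (the one induced by the test-configuration's $\mathbf{C}^*$-action) and that $\mathrm{Fut}_{v_{ext}}(M_0,[\omega_0],v)=0$ with $M_0\not\cong M$. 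The next step is to blow up $M_0$ at a point $p$ fixed by $v$ and by $T$, chosen so that the Hamiltonian $h_v - h_{v_{ext}}$ (normalized appropriately, with the extremal vector field of $M_0$ subtracted) takes a strictly negative value at $p$; such a point exists because $h_v$ restricted to the fixed-point set of $v$ is non-constant unless $v$ is already in the Lie algebra of the torus, a case one disposes of separately.

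Then I would run the Futaki computation: by Corollary~\ref{cor:FutBl} (applied to $M_0$, with the modification by the extremal vector field handled via Proposition~\ref{prop:prodBl}, which guarantees that lifting $v_{ext}$ does not interfere to leading order), the modified Futaki invariant of $\Bl_pM_0$ in the class $\pi^*[\omega_0]-\epsilon^2[E]$ equals
\[ \mathrm{Fut}_{v_{ext}}(M_0,[\omega_0],v) + 2\pi m(m-1)\epsilon^{2m-2}\big(h_v(p)-h_{v_{ext}}(p)\big) + O(\epsilon^{2m}), \]
which is strictly negative for all sufficiently small $\epsilon > 0$ given our choice of $p$. On the other hand, since $M$ admits an extremal metric and $\Bl_pM$ — for $p$ in a suitable orbit — admits an extremal metric in the class $\pi^*[\omega]-\epsilon^2[E]$ by the results recalled in Section~\ref{sec:preliminary} (the relatively stable case, which does not require Theorem~\ref{thm:main}), Proposition~\ref{prop:Ksemistab} applied to $\Bl_pM$ says its modified Futaki invariant over any smooth test-configuration is nonnegative. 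One then transfers the destabilizing configuration on $M_0$ to a smooth test-configuration on $\Bl_pM$ whose central fiber is $\Bl_pM_0$, using that $\Bl_pM_0$ is the central fiber of the total space blown up along the section through $p$; this is where one must check that blowing up the total space of the test-configuration along a $\mathbf{C}^*$- and $T$-invariant section produces again a smooth test-configuration with the prescribed central fiber, and that its class is $\pi^*[\Omega]-\epsilon^2[\widetilde E]$ restricting correctly on fibers. This contradiction completes the argument.

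The main obstacle, I expect, is this last transfer step: making rigorous the claim that a relative destabilization of $M$ blows up to a relative destabilization of $\Bl_pM$, including verifying that the central fiber comes out as $\Bl_pM_0$ rather than something more singular, that smoothness of the submersion is preserved (one must blow up along a smooth section, hence must know the $\mathbf{C}^*$-action has a fixed section near $p$ — this uses that $v$ vanishes at $p$ and the fiber $M\cong\tau^{-1}(1)$ is identified compatibly), and that the modified Futaki invariant is genuinely additive/continuous under this operation so that the $\epsilon$-expansion from Corollary~\ref{cor:FutBl} can be invoked. A secondary technical point is ensuring the point $p$ can be chosen simultaneously fixed by $T$ and by $v$ with $h_v(p)-h_{v_{ext}}(p) < 0$; this follows from a convexity/moment-map argument on the fixed-point components, but must be stated carefully. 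Everything else — the Futaki expansion, the inner-product estimate, semistability of the blowup — is already available in the excerpt.
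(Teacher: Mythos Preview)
Your proposal is correct and follows the same route as the paper, which simply defers to the argument in \cite{SSz09} together with the Futaki expansion (Corollary~\ref{cor:FutBl}) and the inner-product estimate (Proposition~\ref{prop:prodBl}) established here. One point of notation to clean up: you use the single letter $p$ for both the point in the central fiber $M_0$ (fixed by $v$, where the Hamiltonian is negative) and the point in $M\cong\tau^{-1}(1)$ whose blowup you need to carry an extremal metric; these are distinct points joined by the $\mathbf{C}^*$-invariant section you describe, and keeping them separate (say $q\in M_0$ the limit of $p\in M$) will make the verification that $\Bl_pM$ admits an extremal metric and that the induced test-configuration has central fiber $\Bl_qM_0$ transparent.
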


The proof is essentially identical to the argument in \cite{SSz09},
using Proposition~\ref{prop:Ksemistab} together with the formulas that
we have shown in this section. In fact our situation is simpler since we are
only dealing with test-configurations with smooth central fiber.

As an application we have the
following proposition, which shows that we can only hope to construct
extremal metrics on blowups $\Bl_pM$, for which
$\nabla\mathbf{s}(\omega)$ vanishes at $p$. 
\begin{prop}
Suppose that $(M,\omega)$ is an extremal K\"ahler manifold, and $p\in
M$ is such that $\nabla\mathbf{s}(\omega)$ does not vanish at
$p$. Then $(\Bl_pM, [\omega_\epsilon])$ is relatively K-unstable for
all sufficiently small $\epsilon > 0$. 
\end{prop}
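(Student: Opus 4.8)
The plan is to produce a destabilizing smooth test-configuration for $(\Bl_pM,[\omega_\epsilon])$, in the spirit of Stoppa~\cite{Sto08} and \cite{SSz09}, by degenerating the blowup point $p$ along the flow of the extremal vector field. Set $X=\nabla\mathbf{s}(\omega)$. Since $\omega$ is extremal, $X$ is a Hamiltonian holomorphic Killing field with Hamiltonian $h_X=\mathbf{s}(\omega)-\overline{\mathbf{s}}$ normalized to zero mean, it lies in the centre of the maximal reductive subalgebra $\mathfrak{aut}_r(M)$, and $\mathrm{Fut}(M,[\omega],v)=-\langle v,X\rangle$ in the Futaki--Mabuchi inner product for every Hamiltonian holomorphic $v$ on $M$; in particular $\mathrm{Fut}(M,[\omega],X)=-\Vert X\Vert^2$. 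Because $\nabla\mathbf{s}(\omega)$ does not vanish at $p$, $X$ does not lift to $\Bl_pM$, and the crux of the argument will be that the extremal field of $\Bl_pM$ is forced to be strictly shorter than $X$.

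First I would set up the test-configuration. Identify the holomorphic vector fields on $\Bl_pM$ with the subalgebra $\mathfrak{h}_p\subset\mathfrak{aut}_0(M)$ of fields vanishing at $p$. Choose a maximal torus $T$ of a maximal compact subgroup of $\mathrm{Aut}_r(\Bl_pM)$, and, using that a reductive subgroup sits inside a maximal reductive subgroup, extend it to a maximal torus $\widetilde T$ of $\mathrm{Aut}_r(M)$. Then $X\in\widetilde{\mathfrak t}$ because $X$ is central, while the extremal field $\chi_\epsilon$ of $(\Bl_pM,[\omega_\epsilon])$ lies in $\mathfrak t\subset\widetilde{\mathfrak t}$, being central in $\mathrm{Aut}_r(\Bl_pM)$. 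Pick a rational, $\widetilde T$-regular $\xi\in\widetilde{\mathfrak t}$ close to $X$, generating $\rho:\mathbf{C}^*\to\mathrm{Aut}_r(M)$, and put $p_0=\lim_{z\to0}\rho(z)\cdot p$, a fixed point of $\widetilde T$ by regularity of $\xi$. The test-configuration is
\[ \mathcal X=\Bl_{\mathcal Z}(M\times\mathbf{C}),\qquad \mathcal Z=\overline{\{(\rho(z)\cdot p,\,z):z\neq0\}}, \]
with the $\mathbf{C}^*$-action induced by $\rho$ on $M$ and multiplication on $\mathbf{C}$: its general fibre is $\Bl_pM$, its central fibre is $\Bl_{p_0}M$, and the induced central-fibre field is the lift $\widehat\xi$. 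One equips $\mathcal X$ with a K\"ahler metric agreeing with $\omega_\epsilon$ over $\mathbf{C}^*$ and with $\widehat\xi$ Hamiltonian Killing, exactly as in \cite{SSz09}. Since $\widetilde T$, hence $T$, commutes with $\rho$, it acts on $\mathcal X$, so $\mathcal X$ is compatible with $T$; and since $\chi_\epsilon\in\widetilde{\mathfrak t}$ vanishes at $p_0$, it induces $\widehat\chi_\epsilon$ on $\Bl_{p_0}M$, so the modified Futaki invariant $\mathrm{Fut}_{\chi_\epsilon}(\Bl_{p_0}M,[\omega_\epsilon],\widehat\xi)$ is defined.

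The computation I would then carry out uses the identity, immediate from the definitions,
\[ \mathrm{Fut}_{\chi_\epsilon}(\Bl_{p_0}M,[\omega_\epsilon],\widehat\xi)=\mathrm{Fut}(\Bl_{p_0}M,[\omega_\epsilon],\widehat\xi)+\langle\widehat\xi,\widehat\chi_\epsilon\rangle_{\Bl_{p_0}M}. \]
By Corollary~\ref{cor:FutBl}, equation~\eqref{eq:FB1}, the first term is $\mathrm{Fut}(M,[\omega],\xi)+o(1)$, and by the computation behind Proposition~\ref{prop:prodBl} (via~\eqref{eq:product} and Proposition~\ref{prop:Blformulas}) the second is $\langle\xi,\chi_\epsilon\rangle_M+o(1)$. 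It then remains to identify $\chi_0=\lim_{\epsilon\to0}\chi_\epsilon$ in the fixed finite-dimensional space $\mathfrak{h}_p$: the extremal field obeys $\langle\chi_\epsilon,v\rangle_{\Bl_pM,\epsilon}=-\mathrm{Fut}(\Bl_pM,[\omega_\epsilon],v)$ for $v$ in the reductive part of $\mathfrak{h}_p$, and passing to the limit (the inner products converging to $\langle\cdot,\cdot\rangle_M$, the Futaki invariants to those of $M$, again by Corollary~\ref{cor:FutBl}) gives $\langle\chi_0,v\rangle_M=\langle X,v\rangle_M$; with $v=\chi_0$ this reads $\Vert\chi_0\Vert^2=\langle X,\chi_0\rangle_M$, so $\Vert\chi_0\Vert\leqslant\Vert X\Vert$ by Cauchy--Schwarz, with equality only if $\chi_0=X$ --- excluded since $\chi_0$ vanishes at $p$ and $X$ does not. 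Hence
\[ \mathrm{Fut}_{\chi_\epsilon}(\Bl_{p_0}M,[\omega_\epsilon],\widehat\xi)=\langle\xi,\chi_0\rangle_M+\mathrm{Fut}(M,[\omega],\xi)+o(1)=\langle\xi,\chi_0-X\rangle_M+o(1), \]
whose leading constant, for $\xi$ close enough to $X$, is close to $\Vert\chi_0\Vert^2-\Vert X\Vert^2<0$. So for all sufficiently small $\epsilon$ this modified Futaki invariant is negative, and $(\Bl_pM,[\omega_\epsilon])$ is relatively K-unstable.

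I expect the main obstacles to be organizational rather than computational. The two delicate points are: (i) producing, in the possibly non-algebraic K\"ahler setting, a bona fide smooth test-configuration over $\mathbf{C}$ carrying a global K\"ahler metric with the required Hamiltonian Killing field --- handled as in \cite{SSz09} by working over a disc, blowing up the section $\mathcal Z$, and averaging over $S^1$; and (ii) the Lie-theoretic bookkeeping, i.e.\ placing $\chi_\epsilon$, $X$ and the degenerating subgroup $\rho$ inside one torus, which relies on the centrality of $X$ in $\mathfrak{aut}_r(M)$ and on $\mathrm{Aut}_r(\Bl_pM)$ embedding in a maximal reductive subgroup of $\mathrm{Aut}_0(M)$. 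A minor sign issue --- whether the induced central-fibre field is $\widehat\xi$ or $-\widehat\xi$ --- is absorbed by degenerating instead to the opposite fixed point $\lim_{z\to\infty}\rho(z)\cdot p$, which reverses the sign of the leading constant $\Vert\chi_0\Vert^2-\Vert X\Vert^2$, so one of the two degenerations always destabilizes.
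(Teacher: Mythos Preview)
Your argument is correct and arrives at the same destabilizing quantity as the paper, but the route is organized differently. The paper's proof is more direct: it fixes a maximal torus $T\subset G_p$, decomposes $\mathbf{s}=\mathbf{s}_{\mathfrak t}+\mathbf{s}^\perp$ orthogonally with respect to $\mathfrak t$, and degenerates along (a rational approximation of) $-\mathbf{s}^\perp$. Because $\mathbf{s}^\perp\perp\mathfrak t$ already on $M$, Proposition~\ref{prop:prodBl} gives $\langle\widehat{\mathbf{s}^\perp},\widehat v\rangle_{\Bl_qM}=O(\epsilon^{2m-\delta})$ for every $v\in\mathfrak t$, so the correction from the ordinary to the modified Futaki invariant is negligible and the ordinary one is bounded above by the fixed constant $-\Vert\mathbf{s}^\perp\Vert^2$ from Corollary~\ref{cor:FutBl}. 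Your approach instead degenerates along $\xi\approx X=\mathbf{s}$ itself, computes the modified Futaki invariant as $\langle\xi,\chi_0-X\rangle+o(1)$, and uses the Cauchy--Schwarz identification $\chi_0=\mathrm{pr}_{\mathfrak g_p}X$ to see the leading constant is $\Vert\chi_0\Vert^2-\Vert X\Vert^2$. Since $X$ is central, $\mathrm{pr}_{\mathfrak g_p}X=\mathrm{pr}_{\mathfrak t}X=\mathbf{s}_{\mathfrak t}$, so your leading constant equals $-\Vert\mathbf{s}^\perp\Vert^2$ and the two computations coincide.

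What each buys: the paper's orthogonal decomposition avoids the limit $\chi_\epsilon\to\chi_0$ altogether and yields a Futaki invariant bounded away from zero by a constant independent of $\epsilon$, with only a trivially small modification term; this is shorter and sidesteps any discussion of how the extremal vector field of $\Bl_pM$ varies with $\epsilon$. Your approach is a bit more conceptual---it makes explicit that the extremal field of the blowup limits to the projection of $X$ onto $\mathfrak g_p$, which is strictly shorter than $X$---but it requires justifying that limit (straightforward, as you note, since the defining linear system has coefficients converging by Corollary~\ref{cor:FutBl} and Proposition~\ref{prop:prodBl}) and keeping track of more bookkeeping with the tori $T\subset\widetilde T$. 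Your remark about the sign ambiguity is unnecessary if you simply degenerate along $-\xi$ from the start, matching the paper's choice of $-\mathbf{s}^\perp$.
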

\begin{proof}
Let $T\subset G_p$ be a maximal
torus, with Lie algebra $\mathfrak{t}$. By our assumption,
$\mathbf{s}\not\in\mathfrak{t}$. We have an orthogonal decomposition
\[ \mathbf{s} = \mathbf{s}^\perp + \mathbf{s}_{\mathfrak{t}}, \]
where $\mathbf{s}_{\mathfrak{t}}\in\mathfrak{t}$ and $\mathbf{s}^\perp
\perp\mathfrak{t}$. Then $\nabla\mathbf{s}^\perp(p)\not=0$, and we can
assume that $\nabla\mathbf{s}^\perp$ generates a
$\mathbf{C}^*$-action. If it did not, we could approximate
$\mathbf{s}^\perp$ with elements of $\mathfrak{g}$ orthogonal to
$\mathfrak{t}$, which do generate $\mathbf{C}^*$-actions.  

Suppose then that $-\mathbf{s}^\perp$ generates the $\mathbf{C}^*$-action
$\lambda(t)$, and let $q = \lim_{t\to 0}\lambda(t)\cdot p$. This way
we obtain a test-configuration for $(\Bl_pM, \pi^*[\omega] - \epsilon^2[E])$ with central
fiber $\Bl_qM$. The Futaki invariant of the
test-configuration is given by
\[ \mathrm{Fut}(\Bl_qM, \pi^*[\omega] - \epsilon^2[E], -\hat{\mathbf{s}}^\perp) =
\mathrm{Fut}(M, [\omega], -\mathbf{s}^\perp) + O(\epsilon^{2m-2}), \]
using a calculation similar to Corollary~\ref{cor:FutBl}. Since 
\[ \mathrm{Fut}(M, [\omega], -\mathbf{s}^\perp) = \langle -\mathbf{s}^\perp,
\mathbf{s}\rangle = -\Vert \mathbf{s}^\perp\Vert^2, \]
we have a constant $c_0 > 0$ such that
\[  \mathrm{Fut}(\Bl_qM, \pi^*[\omega] - \epsilon^2[E], -\hat{\mathbf{s}}^\perp)   <
-c_0, \]
for sufficiently small $\epsilon$. In order to show that $(\Bl_pM,
L_\epsilon)$ is relatively K-unstable, we still need to adjust this
test-configuration to be orthogonal to $\mathfrak{t}$. Since
$\mathbf{s}^\perp$ is orthogonal to $\mathfrak{t}$, it follows from
Proposition~\ref{prop:prodBl}  that
for any $v\in\mathfrak{t}$ we have
\[ \langle \hat{\mathbf{s}}^\perp, v\rangle =
O(\epsilon^{2m-\delta}), \]
for any $\delta > 0$. This means that after modifying the
test-configuration with an element in $\mathfrak{t}$ to make it
orthogonal to $\mathfrak{t}$ on $\Bl_qM$, the Futaki invariant will
still be negative for sufficiently small $\epsilon$. It follows that
$(\Bl_pM, L_\epsilon)$ is relatively K-unstable for sufficiently small
$\epsilon > 0$. 
\end{proof}

\subsection{Test-configurations for blowups}
In this section we will give the proof of Theorem~\ref{thm:Kstab}. 
Suppose that $(M,\omega)$
is cscK , and suppose that $v$ is a
Hamiltonian holomorphic vector field on $M$ generating a
$\mathbf{C}^*$-action $\lambda(t)$. Then for any $p\in M$,
$\lambda(t)$ induces a test-configuration for
$(\Bl_pM,[\omega_\epsilon])$. The total space of this
test-configuration is simply the blowup of the product $M\times
\mathbf{C}$ along the closure of the orbit
\[ \{ (\lambda(t)\cdot p, t)\,:\, t\in\mathbf{C}^*\}. \]
If $q = \lim_{t\to 0}\lambda(t)\cdot p$, then the central fiber of the
test-configuration is $(\Bl_qM, \omega_\epsilon)$. The induced
$\mathbf{C}^*$-action on $\Bl_qM$ is given by $\lambda(t)$, which
lifts to $\Bl_qM$ since $q$ is a fixed point. The formula
\eqref{eq:FutBl} can be used to compute the Futaki invariant of this
test-configuration. 

When combined with Theorem~\ref{thm:main}, the following proposition
implies Theorem~\ref{thm:Kstab}. This proposition generalizes
\cite[Theorem 5]{GSz10} to K\"ahler manifolds.   
\begin{prop}\label{prop:unstable}
  Suppose that $n > 2$, and
  for some $\epsilon_0 > 0$ there does not
exist $q\in G^c\cdot p$ with $\mu(q) + \epsilon\Delta\mu(q)=0$
for any $\epsilon\in (0,\epsilon_0)$. Then
$(\Bl_pM,[\omega_\epsilon])$ is K-unstable for all sufficiently
small $\epsilon > 0$. 
\end{prop}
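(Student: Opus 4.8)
The plan is to prove K-instability by producing an explicit destabilizing smooth test-configuration for $(\Bl_pM,[\omega_\epsilon])$, using the expansion of the Futaki invariant on a blowup from Corollary~\ref{cor:FutBl} together with the finite-dimensional results of Section~\ref{sec:finitedim}. First I would pass to a GIT statement exactly as in the proof of Theorem~\ref{thm:main}: replace $p$ by a point of its $G^c$-orbit so that $G_p$ is a maximal compact subgroup of $G^c_p$ (this does not change $\Bl_pM$ up to biholomorphism), take $T\subset G_p$ a maximal torus (so $T^c\subset G^c_p$ is maximal), and pass to the action of $G^c_{T^\perp}$, for which $p$ has trivial stabilizer by the argument behind Proposition~\ref{prop:GT}. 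Writing $\mu_{T^\perp},\Delta\mu_{T^\perp}$ for the projections of $\mu,\Delta\mu$ to $\mathfrak{g}_{T^\perp}$, Proposition~\ref{prop:GT} (applied to the K\"ahler forms $\omega-\delta\rho$) shows that the hypothesis of the Proposition is equivalent to the non-existence of $q\in G^c_{T^\perp}\cdot p$ with $\mu_{T^\perp}(q)+\epsilon\Delta\mu_{T^\perp}(q)=0$ for $\epsilon\in(0,\epsilon_0)$, hence by Proposition~\ref{prop:alldelta} to the failure of its condition~(2): there is $\xi\in\mathfrak{g}_{T^\perp}$, $\xi\ne 0$, with either $W_{\mu_{T^\perp}}(p,\xi)<0$ (case (a)), or $W_{\mu_{T^\perp}}(p,\xi)=0$ and $W_{\Delta\mu_{T^\perp}}(p,\xi)\leqslant 0$ (case (b)).

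In either case, after replacing $\xi$ by a nearby rational direction generating a $\mathbf{C}^*$-action $\lambda$ in $G^c_{T^\perp}$ — possible while keeping the relevant sign of the weight, e.g.\ after passing to a complex torus containing $\xi$ on which the weights are rational-linear — and orienting $\lambda$ so that the limit $q=\lim_{t\to 0}\lambda(t)\cdot p$ realises those weights, the action $\lambda$ induces a smooth test-configuration for $(\Bl_pM,[\omega_\epsilon])$ with central fibre $\Bl_qM$ and induced Hamiltonian holomorphic vector field $\hat\xi$, as in Section~\ref{sec:relstab}. Since $\xi$ vanishes at $q$ and $\mu,\Delta\mu$ are continuous, the zero-mean Hamiltonian $h_\xi$ of $\xi$ satisfies $h_\xi(q)=W_{\mu_{T^\perp}}(p,\xi)$ and $\Delta h_\xi(q)=W_{\Delta\mu_{T^\perp}}(p,\xi)$ (using that the componentwise Laplacian commutes with the pairing against $\xi$). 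Because $(M,\omega)$ is cscK we have $\mathrm{Fut}(M,[\omega],\xi)=0$, and since $m>2$ Corollary~\ref{cor:FutBl} gives
\[
\mathrm{Fut}(\Bl_qM,[\omega_\epsilon],\hat\xi)=2\pi m(m-1)\epsilon^{2m-2}h_\xi(q)+\epsilon^{2m}\big(2\pi(m-2)\Delta h_\xi(q)-\overline{\mathbf{s}}\,h_\xi(q)\big)+O(\epsilon^{2m+2}).
\]

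Reading off instability is then immediate in the non-degenerate cases. In case (a), $h_\xi(q)=W_{\mu_{T^\perp}}(p,\xi)<0$, so the leading term is negative and $\mathrm{Fut}(\Bl_qM,[\omega_\epsilon],\hat\xi)<0$ for all small $\epsilon$; the test-configuration destabilises $(\Bl_pM,[\omega_\epsilon])$. In case (b) with $W_{\Delta\mu_{T^\perp}}(p,\xi)<0$, we have $h_\xi(q)=0$, so the $\epsilon^{2m-2}$ term and the $\overline{\mathbf{s}}\,h_\xi(q)$ term both vanish and the expansion reads $2\pi(m-2)\epsilon^{2m}\Delta h_\xi(q)+O(\epsilon^{2m+2})$, which is again negative for small $\epsilon$ because $m>2$ forces $2\pi(m-2)>0$. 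This sign is exactly where the hypothesis $m>2$ is used; for $m=2$ the coefficient of $\Delta h_\xi$ vanishes and one would need a further term in the Futaki expansion, in line with the discussion in Section~\ref{sec:general}.

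The main obstacle is the borderline case inside (b), where $W_{\mu_{T^\perp}}(p,\xi)=W_{\Delta\mu_{T^\perp}}(p,\xi)=0$ and both leading coefficients of the Futaki expansion vanish. Here I would first observe that convexity of $t\mapsto h_\xi(e^{it\xi}\cdot p)$ gives $W_{\mu_{T^\perp}}(p,\xi)+W_{\mu_{T^\perp}}(p,-\xi)\geqslant 0$ with equality precisely when $\xi$ fixes $p$; since $T$ fixes $p$ and commutes with $\xi$, while $T^c$ is maximal in $G^c_p$, a nonzero $\xi\in\mathfrak{g}_{T^\perp}$ cannot fix $p$ and moreover the limit $q$ cannot lie in $G^c\cdot p$ (otherwise $T^c$ together with the semisimple element $\xi$ would span a torus in $G^c_q\cong G^c_p$ of too large a rank), so $\Bl_qM$ is not biholomorphic to $\Bl_pM$. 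One then either perturbs $\xi$ within the null cone $\{W_{\mu_{T^\perp}}(p,\cdot)=0\}$, on which $W_{\Delta\mu_{T^\perp}}$ is piecewise linear, to reach a direction with $W_{\Delta\mu_{T^\perp}}<0$ strictly and reduce to the previous paragraph; or else $W_{\Delta\mu_{T^\perp}}$ vanishes on an entire face, in which case the last clause of Corollary~\ref{cor:FutBl} gives $\mathrm{Fut}(\Bl_qM,[\omega_\epsilon],\hat\xi)=0$ exactly, and since the central fibre $\Bl_qM$ is not biholomorphic to $\Bl_pM$ the test-configuration still violates K-stability. Making this dichotomy precise, and checking that the destabilising directions can be taken rational throughout, is the bulk of the work; the rest is a direct application of Corollary~\ref{cor:FutBl}.
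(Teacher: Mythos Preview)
Your setup and the first two cases match the paper exactly: the reduction to $G^c_{T^\perp}$, the invocation of Proposition~\ref{prop:alldelta}, and the treatment of cases (a) and (b)-strict via Corollary~\ref{cor:FutBl} are precisely the paper's cases 1 and 2.

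The borderline case is where the approaches diverge, and your sketch there has a real gap. First, your ``either/or'' is illusory: in the borderline case you are by definition in the complement of case (b)-strict, so $W_{\Delta\mu_{T^\perp}}\geqslant 0$ on the entire null cone of $W_{\mu_{T^\perp}}$ and the perturbation branch is never available. You are therefore committed to producing a \emph{rational} $\xi\in\mathfrak{g}_{T^\perp}$ generating a $\mathbf{C}^*$ with $W_{\mu}(p,\xi)=W_{\Delta\mu}(p,\xi)=0$ exactly; approximating an irrational $\xi$ by nearby rational ones can destroy both equalities (lower-semicontinuity of the weight gives control only in the wrong direction), and in the general K\"ahler setting there is no evident piecewise-linear structure on the whole group to fall back on.

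The paper avoids this entirely. In its case 3 it observes that in the remaining situation $p$ is \emph{semistable} for $\mu_{T^\perp}+\epsilon\Delta\mu_{T^\perp}$, hence there exist points $q_\epsilon\in\partial(G^c\cdot p)$ with $(\mu_{T^\perp}+\epsilon\Delta\mu_{T^\perp})(q_\epsilon)=0$. Since the boundary is a finite union of orbits, infinitely many $q_\epsilon$ land in a single orbit; comparing two values of $\epsilon$ forces both $\mathrm{pr}_{\mathfrak{g}_q}\mu(q)$ and $\mathrm{pr}_{\mathfrak{g}_q}\Delta\mu(q)$ into $\mathfrak{t}$. A Luna-type slice at $q$ together with Hilbert--Mumford for the (now reductive) stabiliser acting on the tangent space then manufactures a genuine $\mathbf{C}^*\subset G^c_{T^\perp}$ whose limit from some $p'\in G^c\cdot p$ is $q$. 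The structural condition gives $h_v(q)=\Delta h_v(q)=0$, the last clause of Corollary~\ref{cor:FutBl} gives Futaki~$=0$ exactly, and $q\in\partial(G^c\cdot p)$ guarantees the central fibre is not $\Bl_pM$. This detour through boundary orbits and slice theory is precisely what stands in for the rationality argument you left open.
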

\begin{proof}
By moving $p$ in its $G^c$-orbit, we can assume that $G_p$ is a
maximal compact subgroup of $G^c_p$. Then if $T\subset G_p$ is a
maximal torus, then $T^c\subset G_p^c$ is also a maximal torus. As in
Section~\ref{sec:finitedim} we will work with the group
$G_{T^\perp}$. The corresponding moment map $\mu_{T^\perp}$ is simply
the orthogonal projection of $\mu$ onto $\mathfrak{g}_{T^\perp}$. Our
assumption says that $p$ is unstable for the action of $G^c_{T^\perp}$
with respect to the moment map
\[ \mu_{T^\perp} + \epsilon\Delta\mu_{T^\perp} \]
for all sufficiently small $\epsilon > 0$. There are several cases to
consider separately.
\begin{itemize}
\item Suppose that $p$ is strictly unstable for the moment map
  $\mu_{T^\perp}$. This means that there is a $v\in\mathfrak{g}_{T^\perp}$
  generating a $\mathbf{C}^*$-action $\lambda(t)$, such that
   \[\lim_{t\to 0} \langle \mu(\lambda(t)\cdot p), v\rangle < 0. \]
   Write $q = \lim_{t\to 0}\lambda(t)\cdot p$. We then have $h_v(q) <
   0$. From Corollary~\ref{cor:FutBl} it follows that the
   corresponding test-configuration for $(\Bl_pM,[\omega_\epsilon])$ 
   has Futaki invariant
\begin{equation}\label{eq:F1}
 \mathrm{Fut}(\Bl_qM,[\omega_\epsilon],\hat{v}) < -c_0\epsilon^{2m-2}, 
\end{equation}
for some $c_0 > 0$.
This means that $(\Bl_pM,[\omega_\epsilon])$ is K-unstable.
\item Suppose that $p$ is semistable for the moment map
  $\mu_{T^\perp}$, and we can
  find a $v\in \mathfrak{g}_{T^\perp}$ generating a
  $\mathbf{C}^*$-action
  $\lambda(t)$, such that
\[ \begin{aligned} \lim_{t\to 0} \langle \mu(\lambda(t)\cdot p),
  v\rangle &=0 \\
\lim_{t\to 0} \langle \Delta\mu(\lambda(t)\cdot p), v\rangle < 0. 
\end{aligned}\]
Writing again $q = \lim_{t\to 0} \lambda(t)\cdot p$ we then have
$h_v(q) =0$ and $\Delta h_v(q) < 0$. From Corollary~\ref{cor:FutBl} we have
\[ \mathrm{Fut}(\Bl_qM, [\omega_\epsilon],\hat{v}) < -c_0\epsilon^{2m}, \]
for some $c_0 > 0$, and so it follows that $(\Bl_pM,
[\omega_\epsilon])$ is K-unstable.
\item In the remaining case $p$ is semistable with respect to
  $\mu_{T^\perp} + \epsilon\Delta\mu_{T^\perp}$ for all sufficiently small
  $\epsilon > 0$. This implies that we can find
  $q_\epsilon$ in the boundary $\partial G^c\cdot p$ of the
  $G^c$-orbit such that
\[ \mu_{T^\perp}(q_\epsilon) + \epsilon\Delta\mu_{T^\perp}(q_\epsilon)
=0. \]
Since $\partial G^c\cdot p$ is a finite union of orbits, at least one
orbit must contain infinitely many $q_\epsilon$. Choose
$q_{\epsilon_1}$ and $q_{\epsilon_2}$ to be in the same orbit. Since
the moment maps are equivariant and $T$ fixes $q_{\epsilon_i}$, we
have
\[ \mu(q_{\epsilon_i}) + \epsilon_i\Delta\mu(q_{\epsilon_i})\in
\mathfrak{t}. \]
In addition, the projection of the moment map to the stabilizer is an
invariant of the orbit, so if $q$ is 
 in the same $G^c_{T^\perp}$ orbit as the $q_{\epsilon_i}$, then
\[ \mathrm{pr}_{\mathfrak{g}_q}
(\mu(q) + \epsilon_i\Delta\mu(q))\in
\mathfrak{t}. \]
Since this holds for at least two different $\epsilon_i$, we must have
\begin{equation}\label{eq:prmu}
\mathrm{pr}_{\mathfrak{g}_q}\mu(q),
\mathrm{pr}_{\mathfrak{g}_q}\Delta\mu(q) \in\mathfrak{t}. 
\end{equation}
It follows that 
 the stabilizer of $q_{\epsilon_1}$ in $G^c_{T^\perp}$ is
reductive and so there is a local slice for the action of
$G^c_{T^\perp}$ near $q_{\epsilon_1}$ (see Sjamaar~\cite{Sja95} or 
Snow~\cite{Snow82}). Using the
Hilbert-Mumford criterion applied to
the action of the stabilizer on the tangent space at $q_{\epsilon_1}$, 
we can find  a $v\in\mathfrak{g}_{T^\perp}$ generating
a $\mathbf{C}^*$-action $\lambda(t)$, and a point $p'\in G^c\cdot p$, 
such that $q_{\epsilon_1} = \lim_{t\to
  0}\lambda(t)\cdot p'$. This means that there exists a
test-configuration for $(\Bl_pM,[\omega_\epsilon])$, whose central fiber is
$(\Bl_qM,[\omega_\epsilon])$, writing $q=q_{\epsilon_1}$. 

We claim that this test-configuration has zero Futaki
invariant. Indeed, since $v\in \mathfrak{t}^\perp$, it follows from
\eqref{eq:prmu} that the Hamiltonian $h_v$ satisfies $h_v(q) = \Delta
h_v(q) = 0$. In addition since $\mathbf{s}\in\mathfrak{t}$,
we have $\mathrm{Fut}(M,[\omega],v) =
0$. Corollary~\ref{cor:FutBl} then implies that
$\mathrm{Fut}(\Bl_qM,[\omega_\epsilon],\hat{v}) = 0$. It
follows that $(\Bl_pM,[\omega_\epsilon])$ is K-unstable. 
\end{itemize}
\end{proof}

Combining our results we can prove Theorem~\ref{thm:Kstab}.
\begin{proof}[Proof of Theorem~\ref{thm:Kstab}]
$(1) \Rightarrow (2)$: This follows from
Proposition~\ref{prop:Kstab}.

$(2) \Rightarrow (3)$: This is the statement of
Proposition~\ref{prop:unstable}.

$(3) \Rightarrow (1)$: It follows from Theorem~\ref{thm:main},
that under the assumption the blowup $\Bl_pM$ admits an extremal
metric in the class $[\omega_\epsilon]$. We just need to check that
this metric has constant scalar curvature. To do this we need to
compute the Futaki invariant $\mathrm{Fut}(\Bl_pM, [\omega_\epsilon],
\hat{v})$ for all $v\in\mathfrak{g}_p$. Since $(M,\omega)$ is cscK we know that
$\mathrm{Fut}(M,[\omega],v) =0$. In addition if $\epsilon$ is
sufficiently small, then an argument similar to the proof of
Proposition~\ref{prop:alldelta} shows that 
\[ \mathrm{pr}_{\mathfrak{g}_p} \mu(p), \mathrm{pr}_{\mathfrak{g}_p}
\Delta\mu(p) =0. \]
Therefore $h_v(p) = \Delta h_v(p) =0$, so from
Corollary~\ref{cor:FutBl} we get $\mathrm{Fut}(\Bl_pM,
[\omega_\epsilon], \hat{v}) =0$. It follows that the extremal metric
constructed using Theorem~\ref{thm:main} has constant scalar
curvature. 
\end{proof}

\bibliographystyle{plain}
\bibliography{../../mybib}

\end{document}